\newcommand{\eqnum}{\refstepcounter{equation}\textup{\tagform@{\theequation}}}
\newtheorem{thm}{Theorem}[subsection]
\newtheorem{lem}[thm]{Lemma}  
\newtheorem{prop}[thm]{Proposition}
\newtheorem{cor}[thm]{Corollary}
\newtheorem{question}[thm]{Question}
\newtheorem{Rthm}{Theorem}
\theoremstyle{definition}
\newtheorem{df}[thm]{Definition}
\newtheorem{scholium}[thm]{Scholium}
\newtheorem{notation}[thm]{Notation}
\newtheorem{es}[thm]{Example}
\theoremstyle{remark}
\newtheorem{rem}[thm]{Remark}
\DeclareFontFamily{U}{matha}{\hyphenchar\font45}
\DeclareFontShape{U}{matha}{m}{n}{
      <5> <6> <7> <8> <9> <10> gen * matha
      <10.95> matha10 <12> <14.4> <17.28> <20.74> <24.88> matha12
      }{}
\DeclareSymbolFont{matha}{U}{matha}{m}{n}
\DeclareMathSymbol{\cUp}{\mathbin}{matha}{'131} 
\newcommand{\Z}{\mathbb{Z}}
\newcommand{\C}{{\mathbb{C}}}
\newcommand{\R}{{\mathbb{R}}}
\newcommand{\Q}{{\mathbb{Q}}}
\newcommand{\tops}{\texttt{Top}}
\newcommand{\scwols}{\texttt{Scwol}}
\newcommand{\B}{{\mathcal{B}}}
\newcommand{\MS}{{\mathcal{S}}}
\newcommand{\Pc}{{\mathcal{P}}}
\newcommand{\Ic}{{\mathcal{I}}}
\newcommand{\Kc}{{\mathcal{K}}}
\newcommand{\Sc}{{\mathcal{S}}}
\newcommand{\Cc}{{\mathcal{C}}} 
\newcommand{\Fc}{{\mathcal{F}}}
\newcommand{\Tc}{{\mathcal{T}}}
\newcommand{\Ll}{{\mathcal{L}}}
\newcommand{\oal}{{\overline{\alpha}}}
\newcommand{\tal}{{\widetilde{\alpha}}}
\newcommand{\oP}{\overline{P}}
\newcommand{\tga}{\widetilde{\gamma}}
\newcommand{\oga}{\overline{\gamma}}
\newcommand{\ode}{\overline{\delta}}
\newcommand{\tde}{\widetilde{\delta}}
\def\HH{\mathcal H}
\def\MM{\mathcal M}
\newcommand{\iF}[2]{i_{ #1 }( #2 )}
\newcommand{\rk}{{\operatorname{rk}}}
\newcommand{\codim}{{\operatorname{codim}}}
\newcommand{\GC}{\textstyle \int}
\newcommand{\Sal}{\operatorname{Sal}}
\newcommand{\wL}{\widetilde{L}}
\newcommand{\into}[0]{\hookrightarrow}
\newcommand{\id}{\mathrm{id}}
\newcommand{\A}{{\mathcal{A}}} 
\newcommand{\Nc}{{\mathcal{N}}} 
\newcommand{\Ah}{{\mathcal{A}}} 
\newcommand{\mI}{\mathcal{J}} 
\newcommand{\Ar}[1]{\A[#1]} 
\newcommand{\Sl}[1]{\mathcal{S}_{#1}} 
\newcommand{\gre}[1]{\vert #1 \vert} 
\newcommand{\Aa}{\operatorname{A}} 
\newcommand{\pra}{
           \mathrel{\raisebox{.1em}{%
           \reflectbox{\rotatebox[origin=c]{90}{$\triangleright $}}}}}
\newcommand{\Lc}[1]{{\mathscr{L}}_{#1}}
\newcommand{\Ab}{\operatorname{B}} 
\newcommand{\colim}{\operatorname{colim}}
\newcommand{\hocolim}{\operatorname{hocolim}}
\newcommand{\Hom}{\operatorname{Hom}}
\newcommand{\SF}[1]{\Sc^{#1}}
\newcommand{\sv}[1]{\gamma_{#1}}
\newcommand{\mk}[1]{
\stackrel{m_{#1}}{\longrightarrow}
}
\newcommand{\colobj}[2]{
\left[\begin{smallmatrix} #1 \\ #2 \end{smallmatrix}\right]
}
\newcommand{\colmor}[2]{
\xrightarrow{
\left[
  \begin{smallmatrix}
    #1 \\ #2
  \end{smallmatrix}
\right]
}
}
\begin{document}

\title[Short title]{The integer cohomology algebra\\ of toric arrangements}
\author{Filippo Callegaro and Emanuele Delucchi}
\date{\today 
}
\address[F. Callegaro]{
Dipartimento di Matematica, University of Pisa, Italy.
}
\email{callegaro@dm.unipi.it}

\address[E. Delucchi]{Departement de math\'ematiques,
  Universit\'e de Fribourg, Switzerland.} 
\email{emanuele.delucchi@unifr.ch}

\begin{abstract}
We compute the cohomology ring of the complement of a  
toric arrangement with integer coefficients and investigate its
dependency from the arrangement's combinatorial data.
To this end, we study a morphism of spectral sequences associated to
certain combinatorially defined subcomplexes of the toric Salvetti
category in the complexified case, and use a technical argument in
order to extend the results to full generality. As a byproduct we obtain:

\begin{itemize}
\item[-] a ``combinatorial'' version of Brieskorn's lemma in terms of
Salvetti complexes of complexified arrangements,
\item[-] a uniqueness result for realizations of arithmetic matroids
with at least one basis of multiplicity 1.
\end{itemize}
\end{abstract}

\maketitle

\section{Introduction}

The goal of this paper is to give a presentation of the cohomology
ring with integer coefficients of the complement of a toric
arrangement -- i.e., of a family of level sets of characters of the
complex torus -- and to investigate its dependency from the poset of
layers of the arrangement.

This line of research can be traced back to Deligne's seminal work on complements
of normal crossing divisors in smooth projective varieties
\cite{Deligne} and has been extensively and successfully carried out
in the case of  arrangements of hyperplanes in complex space, where
the integer cohomology ring of the complement is a well-studied
object with strong combinatorial structure. In particular, it can be defined purely in terms of the
intersection poset of the arrangement, and in greater generality, for
any matroid, giving rise to the class of so-called {\em Orlik-Solomon
  algebras}. We refer to Yuzvinsky's survey \cite{Yuzvinsky} for a
thorough introduction and a
``tour d'horizon'' of the range of directions of study focusing on 
OS-algebras.

Recently, the study of hyperplane arrangements has been taken as a
stepping stone towards different kinds of generalizations. Among
these let us mention the work of Dupont \cite{dupont2013} developing
algebraic models for complements of divisors with hyperplane-like
crossings and of Bibby \cite{bibby2013} studying the rational
cohomology of complements of arrangements in abelian varieties. Both
apply indeed to the case of interest to us, that of toric
arrangements.

Besides being a natural step beyond arrangements of hyperplanes in the
study of complements of divisors, our motivation for considering toric
arrangements stems also from recent work of De Concini, Procesi and
Vergne which puts topological and combinatorial properties of toric
arrangements in a much wider context (see \cite{DPV} or the book
\cite{dp2005}) and spurred a considerable amount of research aimed at
establishing a suitable combinatorial framework. This research was
tackled along two main directions.
 
One such direction, from algebraic combinatorics,
led Moci \cite{Moci} to introduce a suitable generalization of the Tutte
polynomials and then, jointly with d'Adderio \cite{dAMo}, to the development of
arithmetic matroids (for an up-to date account see Br\"and\'en and
Moci \cite{BrMo}). These objects, as well as others like matroids
over rings \cite{FiMo}, exhibit an interesting structure theory and
recover earlier enumerative results by Ehrenborg, Readdy and Slone
\cite{ERS} and Lawrence \cite{Law} but, as
of yet, only bear an enumerative relationship with topological or
geometric invariants of toric arrangements - in particular, these
structures do not characterize their intersection pattern (one attempt towards closing this gap has been made by
considering group actions on semimatroids \cite{DeRi}). 

The second direction is the study of the combinatorial invariants of
the topology and geometry of toric arrangements: our work is a
contribution in this direction, and therefore we now briefly review earlier
contributions. The Betti numbers of the complement to a toric
arrangement were known at least since work of Looijenga
\cite{looi93}. De Concini and Procesi \cite{dp2005} related these
Betti numbers to the combinatorics of the poset of connected
components of intersections in the context of their computation of a
presentation of the cohomology ring over $\mathbb C$ for unimodular
arrangements (i.e., those arising from kernels of a totally unimodular
set of characters), from which they also deduce formality for these
arrangements. A first combinatorial model for the homotopy type of
complements of toric arrangements was introduced by Moci and
Settepanella \cite{MociSettepanella} for ``centered'' arrangements
(i.e., defined by
kernels of characters) which induce a regular CW-decomposition of the
compact torus $(S^1)^d \subseteq (\mathbb C^*)^d$, and was
subsequently generalized to the case of ``complexified'' toric
arrangements ($S^1$-level sets of characters) by d'Antonio and the
second author \cite{dD2010} who, on this basis, also gave a
presentation of the complement's fundamental group. In later work \cite{dD2013},
d'Antonio and the second author also proved that complements of
complexified toric arrangements are minimal spaces (i.e., they have
the homotopy of a CW-complex where the $i$-dimensional cells are
counted by the $i$-th Betti number): in particular, the integer
cohomology groups are torsion-free and are thus determined by the
associated arithmetic matroid. This raises the question of whether, as
is the case with the OS-Algebra of hyperplane arrangements, the integer
cohomology ring is combinatorially determined. The work of Dupont
\cite{dupont2013} and Bibby \cite{bibby2013} mentioned earlier,
although more general in scope, does
include the case of toric arrangements but falls slightly short of our
aim in that on the one hand it uses field coefficients\footnote{A recent private conversation
with Cl\'ement Dupont indicated that at least parts of his methods
could be generalized to integer coefficients.} and on the other
hand computes only the bigraded module associated to a filtration of the cohomology algebra obtained as the abutment of a spectral sequence.

Lately, Deshpande and Sutar \cite{Deshpande}, by an explicit study of the
Gysin sequence, gave a
sufficient criterion for the complex cohomology algebra of a toric arrangement to be
generated in first degree and to be formal.

In this paper we pair the (by now standard) spectral sequence argument
with a very explicit combinatorial analysis of the toric Salvetti
complex and can thus compute the full cohomology algebra over
the integers of general complexified toric arrangements. The
generalization to non-complexified case relies then on a technical argument.
We give two
presentations of the cohomology algebra and discuss its dependency from the poset of
connected components of intersections. In
the case of arrangements defined by kernels of characters there is
also an associated arithmetic matroid and in this case we prove that
when the defining set of characters contains an unimodular basis the
arithmetic matroids determines the integer cohomology algebra. The
precise results will be stated in Section \ref{sec:overview}, together with
a brief survey of the
architecture of the remainder of our work.

\noindent {\bf Acknowledgements.} This work was started during a Research in pair stay at the CIRM-FBK,
Trento, Italy in March 2014. Support by CIRM-FBK is gratefully
acknowledged. Emanuele Delucchi has been partially supported by the
Swiss National Science Foundation professorship grant PP00P2\_150552/1.

\setcounter{tocdepth}{1}
\tableofcontents

\section{Overview and statement of results}\label{sec:overview}

\subsection{Main definitions}\label{sec:maindef}
Let $T = (\C^*)^d$ be the complex torus and let $T_c = (S^1)^d$ be the compact subtorus of $T$.

A {\em toric arrangement} is a finite set

$$\A = \{Y_1, \cdots, Y_{n} \}$$ 
where, for every $i=1,\ldots,n$,
$$Y_i:=\chi_i^{-1}(a_i) 
$$
with $\chi_i\in \Hom(T,\mathbb C^*)$ and $a_i\in \mathbb C^*$. The arrangement $\A$ is called
 {\em complexified} if $a_i\in S^1$ for every $i$.

A \emph{layer} of $\A$ is a connected component of a non-empty intersections of elements of $\A$. 
The \emph{rank} of a layer $L$, is its codimension as a complex submanifold in $T$.
We order layers by reverse inclusion: $L \leq L'$ if $L' \subseteq L$.
Let $\Cc$ be the poset of layers associated to $\A$ and let $\Cc_q$ be the subset 
of $\Cc$ given by the layers $L \in \Cc$ with $\rk (L) =q$.

The complement of a toric arrangement $\A$ is the space
$$
M(\A):= T\setminus \bigcup \A.
$$

\begin{rem} A toric arrangement is called essential if the layers of
  minimal dimension have dimension $0$ (equivalently, the rank of
  $\mathcal C(\A)$ as a poset equals the dimension of $T$). Notice that for any
  nonessential toric arrangement $\A$ there is an essential toric
  arrangement $\A'$ with $M(\A)= (\mathbb C^*)^{r}\times M(\A')$,
  where with $r=\rk(\mathcal C(\A))$, see
 \cite[Remark 4]{dD2010}.
\end{rem}

As in the case of an hyperplane arrangement, we define the rank of a toric arrangement
$\rk(\A):=\rk(\mathcal C(\A))$.

\def\Aup{\A^\upharpoonright}
To every toric arrangement $\A$ corresponds a periodic affine
hyperplane arrangement $\Aup$ in the universal cover $\C^d$ of the
complex torus. The hyperplane arrangement $\Aup$ is complexified
exactly when $\A$ is. 

\begin{df}\label{df:A0}
  For a toric arrangement $\A$  define
  the hyperplane
  arrangement $$\A_0:=\{Y^\upharpoonright_1,\ldots,Y_n^\upharpoonright\}$$
  where, for $i=1,\ldots ,n$, $Y^\upharpoonright_i$ is the translate at the origin of any
  hyperplane of $\Aup$ lifting $Y_i$.

  Given a layer $L\in \Cc(\A)$, define then 
  $$
  \Ar{L}:=\{Y_j^\upharpoonright\in\A_0 \mid L\subseteq Y_j\}
  $$
\end{df}

\begin{rem} \label{rem:posetArL}
  It is immediate to see that the intersection lattice of the
  hyperplane arrangement $\Ar{L}$ is poset-isomorphic to $\Cc_{\leq L}$.
\end{rem}

\subsection{Background on hyperplane arrangements} \label{sec:backarr}

The fact that the cohomology ring of an arrangement's complement is
combinatorial can be made precise as follows.

Let $\Ah$ be an arrangement of hyperplanes in $\C^d$. The main
combinatorial invariant of $\Ah$ is the poset
$$\Ll(\Ah):=\{\cap \Kc \mid \Kc\subseteq \Ah\}$$
partially ordered by reverse inclusion: $X\geq Y$ if $X\subseteq
Y$. Notice that $\Ll$ contains a unique minimal element that we call
$\hat 0$, corresponding to the intersection over the empty set.
When $\Ah$ is central (i.e. $\cap \Ah \neq \emptyset$), this poset is a
geometric lattice and thus defines a (simple) matroid associated to
the arrangement.

The $j$-th Betti number of the complement $M(\Ah):=\C^d\setminus \cup
\Ah$ can be stated in terms of $\Ll$ as
$$
\beta_j(M(\Ah)) = \sum_{x\in \Ll_j} \mu_\Ll(\hat 0,x)
$$
where $\mu_\Ll$ denotes the M\"obius function of $\Ll$ and $\Ll_j$ is
the set of elements of $\Ll$ of rank $j$.

Brieskorn \cite{Brie1973} proved that the cohomology of $M(\Ah)$ is
torsion-free, thus the additive structure of
$H^*(M(\Ah);\Z)$ is determined by $\Ll$. Moreover, we have the
following fundamental result expressing the cohomology of $\Ah$ in
terms of the top cohomology of subarrangements of the form
\begin{equation}
  \label{eq:1}
  \Ah_X :=\{H\in \A \mid X\subseteq H\} \quad\quad \textrm{ for }X\in \Ll(\Ah).
\end{equation}
\begin{lem}[Brieskorn Lemma \cite{OT}] \label{lem:brieskorn} 
  Let $\Ah$ be an arrangement of hyperplanes. For all $k$ the map
$$
\bigoplus_{X\in \Ll, \rk(X) = k} H^k(M(\Ah_X),\Z) \to H^k(M(\Ah);\Z)
$$
induced by the inclusions $M(\Ah) \into M(\Ah_X)$ is an isomorphism of
groups.
\end{lem}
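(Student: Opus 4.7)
The plan is to prove the lemma by combining a rank-matching argument with an explicit verification of surjectivity. For each flat $X\in\Ll$ of rank $k$, the localized arrangement $\Ah_X$ is central of rank $k$, so its complement $M(\Ah_X)$ has the homotopy type of a $k$-dimensional CW complex whose top Betti number equals $|\mu_\Ll(\hat 0,X)|$; Brieskorn's torsion-freeness theorem then yields that $H^k(M(\Ah_X);\Z)$ is free abelian of this rank. Summing over $X\in\Ll_k$ and invoking the formula $\beta_k(M(\Ah))=\sum_{X\in\Ll_k}|\mu_\Ll(\hat 0,X)|$ recalled earlier shows that source and target of the map are free abelian groups of the same finite rank.

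Next I would establish surjectivity using the logarithmic generators $\omega_H\in H^1(M(\Ah);\Z)$, indexed by $H\in\Ah$, which by Brieskorn's theorem generate $H^*(M(\Ah);\Z)$ as a ring. By the Orlik--Solomon relations, $H^k(M(\Ah);\Z)$ is spanned by products $\omega_{H_1}\cup\cdots\cup\omega_{H_k}$ corresponding to independent subsets $\{H_1,\ldots,H_k\}\subseteq\Ah$. Each such product is already naturally defined in $H^k(M(\Ah_X);\Z)$ for $X=H_1\cap\cdots\cap H_k\in\Ll_k$, and pulls back to the corresponding class in $H^k(M(\Ah);\Z)$ along the inclusion $M(\Ah)\hookrightarrow M(\Ah_X)$. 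Hence every generator lies in the image, and the map is surjective. Since a surjective homomorphism between free abelian groups of the same finite rank has trivial kernel and is therefore an isomorphism, the argument concludes.

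The main obstacle is to control the argument over $\Z$: the freeness statements, the M\"obius-based rank formula, and the description of $H^*(M(\Ah_X);\Z)$ in top degree all depend on Brieskorn's original integral computation. More delicately, one must verify that the nbc-basis of the Orlik--Solomon algebra decomposes $H^k(M(\Ah);\Z)$ into summands indexed by $\Ll_k$ in a way compatible with the direct sum on the source. This ``localization at flats'' is what ultimately upgrades the surjection into an isomorphism matching the combinatorics of the intersection lattice.
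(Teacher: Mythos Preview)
The paper does not give its own proof of this lemma: it is stated as a classical result and attributed to \cite{OT}, so there is nothing in the paper to compare your argument against directly. What the paper \emph{does} prove later (Proposition~\ref{cor:cbl}, the ``Combinatorial Brieskorn Lemma'') is a refinement for complexified arrangements, realizing the Brieskorn isomorphism via explicit poset maps $j_{F(X)}$ and $b_X$ between Salvetti complexes; that argument is geometric and quite different in spirit from your sketch.

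Your outline is a version of the standard proof, but watch the logical dependencies. You invoke (i) torsion-freeness of $H^*(M(\Ah);\Z)$, (ii) generation by the classes $\omega_H$ in degree one, and (iii) the Orlik--Solomon relations in order to reduce to products over independent $k$-subsets. Items (i) and (ii) are Brieskorn's original results and may be taken as input, but in most treatments the full Orlik--Solomon theorem and the nbc basis are \emph{deduced from} the Brieskorn decomposition rather than used to establish it, so invoking (iii) as stated risks circularity. For surjectivity you in fact only need that any product $\omega_{H_1}\cup\cdots\cup\omega_{H_k}$ with $\codim\bigcap_iH_i<k$ can be rewritten over independent $k$-tuples; this follows already from the Arnold--Brieskorn identity $\sum_j(-1)^j\omega_{H_1}\cdots\widehat{\omega_{H_j}}\cdots\omega_{H_{k+1}}=0$ for dependent tuples, which is an elementary residue computation independent of the OS theorem. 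With that adjustment, and with the Betti-number formula (provable by deletion--restriction) supplying the rank count, your surjection-between-equal-rank-free-groups argument goes through.
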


\begin{df}\label{def:b}
Given a hyperplane arrangement $\Ah$ and an intersection $X\in \Ll$,
we will denote by $b^k: H^k(M(\Ah_X); \Z) \to H^k(M(\Ah);\Z)$ 
the map
given by inclusion into the $X$-summand in the decomposition given in
Brieskorn's Lemma.
\end{df}

As far as the algebra structure is concerned, Orlik and Solomon
defined an abstract algebra in terms of the matroid associated to
$\Ah$, then proved it isomorphic to the cohomology algebra using
induction on rank via the
{\em deletion-restriction recurrence}, i.e., the exact sequence

\begin{equation}
  \label{eq:2}
  0\to H^k(M(\Ah '); \Z) \to H^k(M(\Ah);\Z) \to H^{k-1}(M(\Ah'');\Z)
  \to 0
\end{equation}
 valid for all $k>0$, which, given any $H_0\in \Ah$, connects the
 cohomology of the complement of the {\em deleted} arrangement $\Ah':= \Ah\setminus
 \{H_0\}$ and the cohomology of the complement of 
 the {\em restricted} arrangement $\Ah'':=\{H\cap H_0 \mid H\in \Ah' \}$.

The abstract presentation given by Orlik and Solomon is the following.

\begin{df}[Orlik-Solomon algebra of a hyperplane arrangement]
  Consider a central arrangement of hyperplanes $\Ah=\{H_1,\ldots ,
  H_n\}$ and let $E^*$ denote the graded exterior algebra generated by $n$ elements
  $e_1,\ldots,e_n$ in degree $1$ over the ring of integers.
  Define an ideal $\mI(\Ah)$ as generated by the set:
$$
\big\{\partial e_X \,\, \big\vert \,\, X\subseteq [n]; \,\, \codim \bigcap_{i\in
X} H_i < \vert X \vert\big\}
$$
where, for $X=\{i_1,\ldots,i_k\}\in [n]$, we write $\partial e_X :=
e_{i_1}\cdots e_{i_k}$ and define $$\partial e_X = \sum_{j=1}^k
(-1)^{j-1} e_{X\setminus \{i_j\}}.$$

The Orlik-Solomon algebra of $\Ah$ is then defined as the quotient 
$$
\operatorname{OS}^*(\Ah) := E^*/\mI(\Ah).
$$

\end{df}


\begin{thm}[Orlik and Solomon \cite{OS}]
  For every central arrangement of hyperplanes $\Ah$, there is an isomorphism of graded algebras
$$
\operatorname{OS}^*(\Ah) \simeq H^*(M(\Ah);\Z)
$$
\end{thm}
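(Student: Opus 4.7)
The plan is to build an algebra map $\bar\phi \colon \operatorname{OS}^*(\Ah) \to H^*(M(\Ah);\Z)$ sending each generator $e_i$ to the cohomology class of the logarithmic form $\omega_i := \frac{1}{2\pi i}\, d\alpha_i/\alpha_i$, where $\alpha_i$ is a linear functional vanishing on $H_i$. The forms $\omega_i$ are closed on $M(\Ah)$ with integer periods, so the universal property of the exterior algebra produces a map $\phi \colon E^* \to H^*(M(\Ah);\Z)$. To see that $\phi$ kills the ideal $\mI(\Ah)$, one uses the fact that any linear dependence among the $\alpha_{i_j}$ (indexed by a dependent tuple $X$) yields a proportional dependence among the $\omega_{i_j}$ in cohomology; wedging with the remaining $\omega$'s and applying the exterior derivation $\partial$ produces exactly the relation $\partial e_X = 0$.

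To prove that the induced map $\bar\phi$ is an isomorphism, I would exploit Brieskorn's Lemma (\ref{lem:brieskorn}) to decompose both sides according to the intersection lattice. On the algebraic side, the ideal $\mI(\Ah)$ respects the $\Ll$-grading on $E^*$ defined by assigning $e_{i_1}\cdots e_{i_k}$ the weight $H_{i_1}\cap\cdots\cap H_{i_k}$, yielding $\operatorname{OS}^k(\Ah) \simeq \bigoplus_{X \in \Ll_k}\operatorname{OS}^k(\Ah_X)$. Since $\bar\phi$ respects these decompositions, it suffices to prove the isomorphism in top degree for a central arrangement of rank $k$. I would then induct on the number of hyperplanes using the topological deletion-restriction sequence (\ref{eq:2}): fixing $H_0 \in \Ah$ and setting $\Ah' := \Ah\setminus\{H_0\}$, $\Ah'' := \{H \cap H_0 \mid H \in \Ah' \}$, one establishes a parallel algebraic short exact sequence
$$0 \to \operatorname{OS}^*(\Ah') \to \operatorname{OS}^*(\Ah) \xrightarrow{\partial_0} \operatorname{OS}^{*-1}(\Ah'') \to 0,$$
where $\partial_0$ sends $e_0$ to $1$ and kills the other $e_i$. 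Checking that $\bar\phi$ intertwines the two sequences and applying the five lemma completes the induction.

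The main technical obstacle is the construction of the algebraic deletion-restriction sequence: showing that $\partial_0$ descends to the quotient and correctly identifying its kernel demands a careful combinatorial analysis of $\mI(\Ah)$, usually carried out by introducing the no-broken-circuit monomial basis. A less combinatorial alternative is to argue directly that $\operatorname{OS}^k(\Ah_X)$ is a free $\Z$-module of rank $|\mu_\Ll(\hat 0, X)|$ for central $\Ah_X$ of rank $k$, matching the top Betti number computed via Brieskorn; surjectivity of $\bar\phi$ (which follows from the classical fact that the logarithmic forms generate $H^*(M(\Ah);\Z)$) combined with a rank comparison then delivers the isomorphism without needing to work out the algebraic exact sequence explicitly.
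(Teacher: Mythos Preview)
The paper does not give its own proof of this statement: it is quoted as a known result due to Orlik and Solomon \cite{OS}, and the surrounding text only sketches the classical strategy (``induction on rank via the deletion-restriction recurrence'', i.e., the exact sequence \eqref{eq:2}). Your proposal is exactly this standard argument --- define the map via logarithmic forms, reduce to top degree using the Brieskorn decomposition on both sides, and finish by comparing the algebraic and topological deletion-restriction sequences with the five lemma --- so it is correct and aligned with what the paper references.
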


\subsection{Results}

We now briefly formulate our main results. The remainder of the paper
will be then devoted to the proofs. 
Let us 
consider a
 toric arrangement $\A$,  writing $\Cc$ for the poset of layers of $\A$.

\subsubsection{The algebra $\Aa(\A)$}
\begin{df} \label{df:diagram} 
Let $\Lc{}: \Cc \to \{\Z\mbox{-algebras} \}$ be the diagram defined by 
$$
L \mapsto \Lc{L}:= H^*(L;\Z) \otimes H^*(M(\Ar{L});\Z)
$$
and 
$$
L'\leq L \mapsto \Lc{L'\leq L} = i^* \otimes b : \Lc{L'} \to \Lc{L},
$$
where 
$$
i^*: H^*(L';\Z) \to H^*(L;\Z) 
$$
is the natural morphism induced by the inclusion $L \stackrel{i}{\into} L'$ 
and
$b$ denotes the map of Definition \ref{def:b}.
\end{df}

The algebra $\Lc{L}$ can be graded with the graduation induced by  
$H^*(M(\Ar{L});\Z)$, 
hence we have
$$
\Lc{L}^q= H^*(L;\Z) \otimes H^q(M(\Ar{L});\Z).
$$

\begin{df} \label{def:algebra_a}
We define the 
algebra $\Aa(\A)$ as the direct sum
$$
\bigoplus_{L\in \Cc} \Lc{L}^{\rk (L)}
$$
with multiplication map defined as follows. Let $L,L' \in \Cc$ be two layers. Consider two classes $\alpha \in \Lc{L}^{\rk(L)}$ and $\alpha' \in \Lc{L'}^{\rk(L')}$. We define the product
$$
((\alpha) \pra (\alpha'))_{L''}\!:= \left\{ 
\begin{array}{ll}
\!\Lc{L \leq L''}(\alpha) \cUp \Lc{L' \leq L''}(\alpha') & \mbox{if } L\! \cap\! L'\! \leq\! L'' \mbox{ and } \rk (L'') = \rk (L) + \rk (L'); \\
\!0 & \mbox{otherwise.} 
\end{array}
\right.
$$

\end{df}

\begin{Rthm} \label{thm:main}
  There is an isomorphism of algebras 
$$H^*(M(\A);\Z) \simeq \Aa(\A).$$
\end{Rthm}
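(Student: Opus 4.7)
My plan is to prove the theorem first for complexified toric arrangements via an analysis of the toric Salvetti complex, and then transfer the result to the general case.

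In the complexified case, I would use the toric Salvetti complex of d'Antonio and Delucchi as a combinatorial model for $M(\A)$. The central idea is to equip this complex with a filtration indexed by the rank of layers, producing a spectral sequence $E_\bullet^{p,q} \Rightarrow H^{p+q}(M(\A);\Z)$; comparing this to the analogous spectral sequence for the lifted periodic hyperplane arrangement $\Aup$ supplies the morphism of spectral sequences that the introduction advertises. The crucial local analysis identifies, for each layer $L$, the local contribution on the $E_1$-page with $H^p(L;\Z)\otimes H^q(M(\Ar{L});\Z)$: the toric factor $H^p(L;\Z)$ arises from directions along $L$ and the arrangement factor from transverse directions, where a combinatorial version of Brieskorn's lemma (listed in the introduction as a byproduct of the paper) yields the identification. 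Since $\Ar{L}$ is central and essential of rank $\rk L$, the Orlik--Solomon algebra vanishes above degree $\rk L$, concentrating nontrivial $E_\infty$-contributions on the diagonal $q = \rk L$ and yielding the additive isomorphism $H^*(M(\A);\Z) \cong \bigoplus_L \Lc{L}^{\rk L} = \Aa(\A)$.

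To pin down the ring structure, I would trace cup products through the spectral sequence. The product of two classes supported on layers $L$ and $L'$ is located on layers $L''$ with $L\cap L' \leq L''$, and degree reasons force $\rk L'' = \rk L + \rk L'$. The $L''$-component is controlled by the restriction $i^*$ along $L\hookrightarrow L''$ on the toric factor and by the Brieskorn map $b$ on the arrangement factor, reproducing exactly the formula of Definition \ref{def:algebra_a}. This is the technical heart of the argument: while the additive isomorphism comes relatively cheaply, matching the cup product with the specific product $\pra$ requires a chain-level analysis of how cup products of classes localized at different layers redistribute among layers of maximal intersection rank, and a careful verification that the combinatorial Brieskorn maps govern this redistribution. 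This is where the morphism of spectral sequences becomes essential.

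For a general (not necessarily complexified) toric arrangement, I would exploit the fact that the combinatorial data $\Cc$ and the local arrangements $\Ar{L}$ depend only on the characters $\chi_i$ and not on the parameters $a_i$. A continuous deformation of the $a_i$ to unit complex numbers produces a path to a complexified arrangement with the same $\Aa(\A)$, and a naturality argument -- using that the cohomology groups are torsion-free and that the maps $b$ and $i^*$ vary continuously along the deformation -- transfers the algebra isomorphism back to $\A$. The main obstacle remains the ring structure in the complexified case, as described above.
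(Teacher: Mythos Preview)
Your outline diverges from the paper's argument in two substantive ways, one of which is a genuine gap.

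\textbf{Complexified case: different mechanism for the ring structure.} The paper does not filter the toric Salvetti complex by rank of layers. It uses the Leray spectral sequence of the projection $\pi:\Sal(\A)\to T_c$ (equivalently, of the inclusion $M(\A)\hookrightarrow T$), whose $E_2$-page decomposes as $\bigoplus_{L\in\Cc_q} H^p(L;\Z)\otimes H^q(M(\Ar{L});\Z)$ and which collapses at $E_2$ by a Poincar\'e-polynomial count. More importantly, the ring structure is \emph{not} read off the spectral sequence. Instead, for each layer $L$ the paper builds an explicit subcomplex $\Sc_L\hookrightarrow\Sal(\A)$ homotopy equivalent to $L\times M(\Ar{L})$, and assembles the inclusions into a genuine map of spaces $\Phi:\coprod_L \Sc_L\to\Sal(\A)$. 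Then $\Phi^*:H^*(M(\A);\Z)\to\bigoplus_L\Lc{L}$ is automatically a ring homomorphism. The coherence theorem (Theorem~\ref{lem:coherent}) shows that its image lies in the subalgebra $\Ab(\A)$ of coherent elements, the spectral-sequence morphism $\widehat E\to\bigoplus_L{}_LE$ is used only to check surjectivity onto $\Ab(\A)$, and injectivity is a rank count. Your plan to ``trace cup products through the spectral sequence'' faces the usual extension problem: a multiplicative spectral sequence determines only the associated graded ring, and you give no mechanism to resolve the ambiguities. The paper's device of an honest ring map into a known target is precisely what bypasses this.

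\textbf{General case: the deformation argument has a gap.} Your claim that $\Cc$ and the $\Ar{L}$ depend only on the characters $\chi_i$ and not on the levels $a_i$ is false: whether an intersection $\bigcap_{i\in S}\chi_i^{-1}(a_i)$ is nonempty depends on multiplicative relations among the $a_i$, so the poset of layers genuinely varies with the parameters. In particular there is no reason a continuous path of $a_i$'s into $S^1$ preserves $\Cc$ (radially projecting $a_i\mapsto a_i/|a_i|$ can create new coincidences). The paper does not deform. It observes that for each maximal layer $P$ the subarrangement $\A_P=\{Y\in\A\mid P\subset Y\}$ is, after translation by a point of $P$, complexified; hence the complexified theorem applies to each $\A_P$. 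It then studies the diagonal $\Delta:M(\A)\to\prod_{P\in\Cc_{\max}}M(\A_P)$, computes $\ker\Delta^*$ explicitly as an ideal $I(\A)+J(\A)$, and exhibits an isomorphism $\big(\bigotimes_P \Aa(\A_P)\big)/\ker\Delta^*\xrightarrow{\ \sim\ }\Aa(\A)$. This reduction to local (almost complexified) subarrangements is the missing idea in your general-case plan.
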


\subsubsection{The algebra $\Ab(\A)$}

\begin{df} \label{def:coherent}
Let $\alpha$ be an element in the direct sum $\bigoplus_{L \in \Cc} \Lc{L}$.
We say that $\alpha$ is \emph{coherent} if 
for every integer $q$ and for every $L \in \Cc_{>q}$ 
we have that
$$
\sum_{L' \in (\Cc_{\leq L})_{q}}\Lc{L' \leq L}(\alpha_{L'}^q) = \alpha_{L}^q
$$
where $\alpha_{L}^q$ (resp. $\alpha_{L'}^q$) is the component of $\alpha_{L}$ (resp. $\alpha_{L'}$) in 
$\Lc{L}^q$ (resp. $\Lc{L'}^q$).
\end{df}

Coherent elements in $\bigoplus_{L \in \Cc} \Lc{L}^q$
generates a subgroup, in fact they form a subalgebra (see Proposition
\ref{prop:colim}) that we call $\Ab(\A)$ (see Definition \ref{def:algebra_b}).

\begin{Rthm}[see Proposition \ref{prop:iso_algebras}] \label{thm:iso_alg}
  The algebras $\Aa(\A)$ and $\Ab(\A)$ are isomorphic.
\end{Rthm}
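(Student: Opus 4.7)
The plan is to construct the desired isomorphism as projection onto top-rank components. First I would define a map $\phi : \Ab(\A) \to \Aa(\A)$ by setting
$$
\phi(\alpha) \; := \; \bigoplus_{L \in \Cc} \alpha_L^{\rk(L)},
$$
which is well-defined because $\alpha_L^{\rk(L)} \in \Lc{L}^{\rk(L)}$ lies in the $L$-summand of $\Aa(\A)$. The candidate inverse $\psi : \Aa(\A) \to \Ab(\A)$ takes a tuple $a = (a_L)_L$ and expands it by
$$
\psi(a)_L^q \; := \; \sum_{M \in (\Cc_{\leq L})_q} \Lc{M \leq L}(a_M) \; \in \; \Lc{L}^q.
$$

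Next I would verify that $\psi(a)$ is coherent and that $\phi,\psi$ are mutually inverse. Coherence reduces to the diagram identity $\Lc{L' \leq L} \circ \Lc{M \leq L'} = \Lc{M \leq L}$ for $M \leq L' \leq L$, which follows from functoriality of $i^*$ together with functoriality of the Brieskorn maps $b$ (the latter because inclusions of sub-arrangements compose and $b$ is induced on cohomology by such inclusions). Granted this, for $\rk(L) > q$ one has
$$
\sum_{L' \in (\Cc_{\leq L})_q} \Lc{L' \leq L}\bigl(\psi(a)_{L'}^q\bigr) \; = \; \sum_{L' \in (\Cc_{\leq L})_q} \Lc{L' \leq L}(a_{L'}) \; = \; \psi(a)_L^q,
$$
using that when $\rk(L')=q$ the defining sum for $\psi(a)_{L'}^q$ reduces to the single term $M=L'$. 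Conversely, if $\alpha \in \Ab(\A)$ then $\psi(\phi(\alpha))_L^q = \sum_{L' \in (\Cc_{\leq L})_q} \Lc{L' \leq L}(\alpha_{L'}^q) = \alpha_L^q$ by coherence, so $\psi \circ \phi = \id$; meanwhile $\phi \circ \psi = \id$ is immediate from $\Lc{L \leq L}=\id$.

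Finally, to see that $\phi$ is multiplicative I would fix $\alpha, \beta \in \Ab(\A)$ and compute the $L$-summand of $\phi(\alpha \cdot \beta)$ as the top-degree piece $(\alpha_L \cUp \beta_L)^{\rk(L)}$ of the cup product in $\Lc{L}$. Writing $a_M := \alpha_M^{\rk(M)}$, $b_{M'} := \beta_{M'}^{\rk(M')}$ and substituting the coherence expansions of $\alpha_L^p$ and $\beta_L^q$, this becomes
$$
\sum_{p+q=\rk(L)} \; \sum_{\substack{M,\,M' \in \Cc_{\leq L} \\ \rk(M)=p,\; \rk(M')=q}} \Lc{M \leq L}(a_M) \cUp \Lc{M' \leq L}(b_{M'}),
$$
which coincides with the $L$-summand of $\phi(\alpha) \pra \phi(\beta)$ computed via Definition \ref{def:algebra_a}: the conditions $M \cap M' \leq L$ and $\rk(M)+\rk(M')=\rk(L)$ translate exactly into $M,M' \leq L$ plus rank additivity, yielding the same index set.

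The main obstacle is not conceptual but notational: one must arrange the bookkeeping so that the two descriptions of the product match summand-by-summand, and one must invoke Proposition \ref{prop:colim} to know that the component-wise cup product on $\Ab(\A)$ preserves coherence (so that the source of $\phi$ is indeed an algebra). Once these are in place, $\phi$ is an isomorphism of graded algebras.
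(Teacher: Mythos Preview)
Your proof is correct and follows essentially the same route as the paper: your map $\psi$ is exactly the map $p$ of Definition~\ref{df:map_p}, your $\phi$ is the restriction to $\Ab(\A)$ of the projection $\pi$, and your verification that they are mutual inverses reproduces Proposition~\ref{prop:iso_mod}. The only cosmetic difference is in the multiplicativity check: the paper introduces the auxiliary product $\odot$ (Definition~\ref{def:cdot}) and uses Lemma~\ref{lem:products} to identify it with the componentwise cup on coherent elements, whereas you expand the top-degree piece of the cup product directly via the coherence relations and match indices with Definition~\ref{def:algebra_a}; both arguments amount to the same computation.
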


\subsubsection{Combinatorial aspects} \label{sec:combin_aspects}
After obtaining a grasp on the cohomology algebra it is natural,
especially in comparison with the case of hyperplane arrangements, to
ask the question of whether (and in what sense) it is combinatorially
determined. The most natural combinatorial structure to consider in
this context is of course the poset of layers $\Cc$, both because this
is the direct counterpart of the intersection poset of a hyperplane
arrangement and because we already know it determines the Betti
numbers and hence (by torsion-freeness) the cohomology
groups. As an additional element of similarity, we prove in \S \ref{sec:Whitney} that just as in the case of hyperplane
arrangements the cohomology groups can be obtained as the Whitney
homology of $\Cc$.  When the arrangement is centered (i.e., defined by kernels of characters), another associated structure is the
arithmetic matroid of the defining characters \cite{BrMo}. While for
hyperplane arrangements the two counterparts -- (semi)lattice of flats
and (semi)matroid -- are equivalent
combinatorial structures, in our situation it is still true that in
the centered case $\Cc$ determines an
arithmetic matroid, but it is not known at present how to construct $\Cc$
from an abstract arithmetic matroid. Thus the question is the following.

\begin{question}\label{qu:combi}
Is the isomorphism type of the integer cohomology ring of the complement of a complexified toric arrangement determined by the poset of layers?
\end{question}

The strongest affirmative result we can prove at the moment is that
for centered toric arrangements which possess a unimodular basis the
poset $\Cc$ does determine the cohomology algebra. Indeed, in this
case the arithmetic matroid
determines the arrangement itself: our Theorem \ref{thm:ricostruzione} shows
that if an arithmetic matroid with a unimodular basis is
representable, then the representation is unique up to sign reversal
of the vectors.

We cannot at this moment solve Question \ref{qu:combi} in the general
(non-centered, without unimodular bases) case, and will close our work
with an example that we hope will illustrate some of the delicacy of
the situation, namely: even if two cohomology rings are isomorphic,
there needs not be a ``natural'' isomorphism.

 \begin{rem}
In the following sections we will consider only complexified toric
arrangements. The extension of our results to general,
non-complexified toric arrangement will be given in Section
\ref{sec:general}.
 \end{rem}
 
\subsection{Structure of the paper}
Given a complexified toric arrangement $\A$ (defined in \S \ref{sec:maindef}), our combinatorial model for the homotopy type of the
complement of $\A$ is the toric
Salvetti complex $\Sal(\A)$, in the formulation  given in \cite{dD2013}, in particular as the nerve of
an acyclic category obtained as 
homotopy colimit of a diagram of posets. 
In
Section \ref{sec:preparation} we review some basic facts about the combinatorics and topology of
acyclic categories and establish some facts about the combinatorial
topology of Salvetti complexes of complexified hyperplane
arrangements. In particular, 
\begin{itemize}
	\item[(a)] we identify maps between poset of cells of Salvetti
	complexes which induce the Brieskorn isomorphisms (Proposition
	\ref{cor:cbl}, which we call a ``combinatorial
	Brie\-skorn Lemma'' for complexified arrangements).
\end{itemize}
The next step is carried out in Section \ref{sec:tsc}, where
\begin{itemize}
	\item[(b)] for every connected component $L$ of an intersection of
	elements of $\A$ we define a subcomplex
	\begin{equation}
	\Sc_{L}\hookrightarrow
	\Sal(\A)\label{eq:5}
	\end{equation}
	with the homotopy type of the product $L\times M(\Ar{L})$,
	where $\Ar{L}$ is the arrangement of hyperplanes in $\mathbb C^d$ defined by $\A$ in
	the tangent space to $(\mathbb C^*)^d$ at any generic point in $L$
	and $M(\Ar{L}):=\mathbb C^d \setminus
	\bigcup \Ar{L}$.
	\item[(c)] Moreover, using (a) we can identify, and study at the level of cell
	complexes, the maps that are induced in cohomology by the inclusions
	\eqref{eq:5} and between $H^*(\Sc_{L})$ and $H^*(\Sc_{L'})$ for
	$L\subseteq  L'$.
\end{itemize}

Section \ref{sec:SpSe} is devoted to the inspection of the spectral
sequence $\widehat E_{r}^{p,q}$ for $\Sal(\A)$ coming from the formulation of
the toric Salvetti complex as a homotopy colimit (see Segal \cite{segal68}) (which is
indeed equivalent to the Leray spectral sequence of the inclusion of
$M(\A)$ into the torus) and the (trivial) spectral sequences ${}_L E_{r}^{p,q}$
for $\Sc_L$ coming from projection on the torus factor.
These spectral sequences all degenerate at the second page. 
\begin{itemize}
	\item[(d)] The map of
	spectral sequences induced
	by the inclusions \eqref{eq:5} leads us to consider the following
	commuting diagram (of groups).
	$$
	\begin{CD}
	H^*(\Sal(A)) @>>> H^*(\coprod_L \Sc_L) \\
	@VVV @VVV\\
	\widehat E_{2}^{p,q} @>>> \bigoplus_L \,\,{}_L E_{2}^{p,q} 
	\end{CD}\begin{CD}
	=\bigoplus_L H^*(L)
	\otimes H^*(M(\Ar{L}))\\
	@.\\
	@.
	\end{CD}
	$$
	After some preparation in Section \ref{ss:algebras}, the gist of our proof is reached in Section \ref{ss:proof_main}, where we use the (explicit) bottom map
	(of groups) to prove injectivity and to characterize, via (c), the
	image of the top map (of rings). We do this by presenting the image as
	an algebra $A(\A)$ obtained by defining ''the natural product'' on
	$\bigoplus_L H^*(L)\otimes H^{\codim L} (M(\Ar{L}))$ (Definition \ref{def:algebra_a}) as well as an
	algebra $B(\A)$ of ``coherent elements'' of $\bigoplus_L
	H^*(L)\otimes H^{*} (M(\Ar{L}))$ (Definition \ref{def:coherent}).
\end{itemize}

In Section \ref{sec:general} we extend our results to general
(non-complexified) toric arrangements, using a deletion-restriction
type argument which allows us to reduce to the complexified case. 
We then close with Section \ref{sec:combiforse} where we investigate
the dependency of the cohomology ring structure from the poset $\Cc$ of
connected components of intersections, trying to identify similarities
and differences with the case of hyperplane arrangements, where this cohomology
structure is completely determined by the poset of intersections. We
will show that the cohomology groups are, as in the hyperplane case,
obtained as Whitney homology of the intersection poset (\S \ref{sec:Whitney}), and we prove that
$\Cc$ determines the cohomology ring of every toric arrangement which is defined as the set of kernels of a
family of characters which contains at least one unimodular basis
(Theorem \ref{thm:ricostruzione}). We conclude by giving two examples (\S \ref{sec:examples}) 
which illustrate the subtle relationship of the combinatorics of the
poset of layers with the ring structure of the cohomology. 
First, we present two 
(centered) arrangements in $(\mathbb C^*)^2$ with
isomorphic posets of layers which do indeed have isomorphic cohomology
rings, but no `natural' isomorphism exists (i.e., no isomorphism which
fixes the image of the injections in cohomology obtained from
including the complements into the full complex torus). Last, we give
another arrangement (also in rank $2$) which shows that a ``natural''
condition for the cohomology ring to be generated in degree $1$ is not sufficient.

\section{Preparations}\label{sec:preparation}
\subsection{Categories and diagrams}

Given a category $\Cc$, we will denote by $\gre{\Cc}$ the geometric realization
of the nerve of $\Cc$ (in particular, this is a polyhedral complex in
the sense of \cite{Kozlov}). A
functor $F: \Cc_1\to \Cc_2$ induces a cellular map $\gre{F}$ between
the geometric realizations. We will for brevity say that two categories are `homotopy
equivalent' meaning that their nerves are.

A kind of categories of special interest for us are {\em face
  categories of polyhedral complexes}.
We refer e.g.\ to \cite[Section 3]{dD2013} for a
precise definition and here only recall that the face category $\Fc(K)$ of a
polyhedral complex $K$ has the cells of $K$ as objects, and one
morphism $P\to Q$ for every attachment of the polyhedral cell $P$ to a face
of the polyhedral cell $Q$. 

It is a standard fact
that, if $K$ is a polyhedral complex, $\gre{\Fc(K)}$ can be embedded
into $K$ as its barycentric subdivision (see \cite{Tamaki} for a
thorough investigation of this situation).

Face categories of polyhedral complexes are examples of categories
where the identity morphisms are the only invertible morphisms, as
well as the only endomorphisms. Such categories are called {\em
  scwols} (for {``small
  categories without loops''}) in the terminology of \cite{BrHa} or
``acyclic categories'', e.g., in \cite{Kozlov}.
 
It is standard to consider a partially ordered set $(P,\leq)$ as
a scwol $\mathcal P$ with set of objects $\operatorname{Ob}(\mathcal P) =P$ and
$\vert\operatorname{Mor}_{\mathcal P}(p,q)\vert\leq 1$ for
all $p,q\in P$, with $\vert \operatorname{Mor}_{\mathcal P}(p,q) \vert
=1$ if and only if $p\leq q$: in this case we will simply speak of
``the morphism $p\leq q$''.

A {\em diagram} over a category $\Ic$ (which in our case will always
be a scwol) is a
functor 
$$
\mathscr D : \Ic \to \mathcal X
$$
where, in this paper, $\mathcal X$ can be the category $\tops$ of topological
spaces, $\scwols$ of scwols, or the categories of Abelian groups or
$\mathbb Z$-algebras. A morphism between diagrams $\mathscr D_1$, $\mathscr D_2$ over the same index category $\Ic$
is a family $\alpha=(\alpha_i : \mathscr D_1(i) \to \mathscr D_2(i))_{i\in \operatorname{Ob}\Ic}$ of morphisms of
$\mathcal X$ that commute with diagram maps - that is, such that, for every morphism $i\to j$ of $\Ic$,
$\alpha_j \circ \mathscr D_1(i\to j) = \mathscr D_2(i\to j) \circ \alpha_i$.

\subsubsection{$\mathcal X = \tops$} There is an extensive literature
on diagrams of spaces, in particular studying their homotopy
colimits. We content ourselves with listing some facts we'll have use
for and refer to \cite{WZZ} or \cite{Kozlov} for an introduction to the
subject and proofs.

\begin{lem}
  Let $\alpha$ be a morphism between two diagrams $\mathscr D_1,
  \mathscr D_2$ over the same index category $\Ic$. If every
  $\alpha_i$ is a homotopy equivalence, then $\alpha$ induces a
  homotopy equivalence of homotopy colimits
$$
\hocolim \mathscr D_1 \to \hocolim \mathscr D_2.
$$
\end{lem}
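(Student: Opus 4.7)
The plan is to appeal to the standard construction of $\hocolim \mathscr D$ as the geometric realization of the \emph{simplicial replacement} (Bousfield--Kan construction) of the diagram, and to show that a levelwise homotopy equivalence of diagrams induces a levelwise homotopy equivalence of simplicial replacements which then passes to realizations.

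Concretely, first I would recall that for a diagram $\mathscr D: \Ic \to \tops$ one has a simplicial space $\mathrm{srep}(\mathscr D)_\bullet$ whose space of $n$-simplices is the coproduct
\[
\mathrm{srep}(\mathscr D)_n = \coprod_{i_0 \to i_1 \to \cdots \to i_n} \mathscr D(i_0),
\]
taken over all $n$-chains of composable morphisms in $\Ic$, with face and degeneracy maps assembled from the structure maps of $\mathscr D$ and of $\Ic$, and that $\hocolim \mathscr D$ is by definition (or a standard theorem) homotopy equivalent to the geometric realization $|\mathrm{srep}(\mathscr D)_\bullet|$. A morphism $\alpha: \mathscr D_1 \to \mathscr D_2$ of diagrams evidently induces a morphism of simplicial spaces $\mathrm{srep}(\alpha)_\bullet$, which on the level of $n$-simplices is simply the disjoint union $\coprod_{i_0 \to \cdots \to i_n} \alpha_{i_0}$ of the components of $\alpha$.

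Next I would observe that, under the hypothesis that every $\alpha_i$ is a homotopy equivalence, each map $\mathrm{srep}(\alpha)_n$ is a disjoint union of homotopy equivalences, and hence itself a homotopy equivalence. The conclusion that the induced map on realizations
\[
|\mathrm{srep}(\mathscr D_1)_\bullet| \longrightarrow |\mathrm{srep}(\mathscr D_2)_\bullet|
\]
is again a homotopy equivalence follows from the standard fact that a levelwise homotopy equivalence between Reedy cofibrant (``good'' in Segal's sense) simplicial spaces realizes to a homotopy equivalence; this applies to the Bousfield--Kan construction because the degeneracy maps are inclusions of summands and hence cofibrations.

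The main obstacle is of a bookkeeping rather than mathematical nature: one needs to make sure that the version of $\hocolim$ used is the one admitting the ``good'' simplicial replacement model, so that a levelwise equivalence descends to the realization. Since we will throughout work with nerves of diagrams of scwols (cf.\ the references \cite{WZZ, Kozlov}) and ultimately apply the lemma to diagrams of CW-complexes built from such nerves, the cofibration hypotheses will be automatic, and the lemma reduces to the statement cited from the literature. For this reason, in the main text I would simply refer the reader to \cite[Ch.\ XII]{WZZ} or \cite[\S 15]{Kozlov}, where exactly this homotopy invariance property of $\hocolim$ is proved in the form needed here.
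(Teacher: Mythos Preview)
Your proposal is correct and ultimately lands exactly where the paper does: this lemma is stated in the paper as a standard fact about homotopy colimits, with no proof beyond a reference to \cite{WZZ} and \cite{Kozlov}. Your sketch via the Bousfield--Kan simplicial replacement is the standard argument behind those references and is more than the paper itself supplies, but your concluding sentence---deferring to \cite{WZZ} and \cite{Kozlov}---is precisely the paper's own treatment.
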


That there is a
canonical projection 
$$
\pi: \hocolim \mathscr D \to \gre{\Ic}
$$

The Leray spectral sequence of this projection then can be used to
compute the (co)homology of the homotopy colimit. It is equivalent to
the spectral sequence studied by Segal \cite{segal68} and has second
page
$$
E_2^{p,q} = H^p(\gre{\Ic}, \mathscr H^q(\pi^{-1}; \mathbb Z))
\Rightarrow H^{*} (\hocolim \mathscr D ; \mathbb Z).
$$

\subsubsection{$\mathcal X = \scwols$}
\def\Ob{\operatorname{Ob}}
\def\Mor{\operatorname{Mor}}

The topological spaces we will be studying will come with a natural
combinatorial stratification and can therefore can be written as nerves
of acyclic categories.
Recall from \cite[Definition 1.1]{Thomason} the {\em Grothendieck
  construction} $\GC \mathscr D$ associated to a diagram $\mathscr D :
\Ic \to \scwols$. This is the category with object set consisting of all pairs $(i,x)$ with $i\in
\Ob(\Ic)$ and $x\in \Ob(\mathscr D(i))$, and with morphisms
$(i_1,x_2)\to (i_2,x_2)$ corresponding to pairs $(f,\mu)$,
$f\in\Mor_{\Ic}(i_1,i_2)$ and $\mu \in \Mor_{\mathscr
  D(i_2)}(f(i_1),i_2)$, composed in the obvious fashion.
  
\begin{lem}[Theorem 1.2 of \cite{Thomason}] \label{lem:Thomason}
Given a diagram $\mathscr D : \Ic \to \scwols$, we have a natural
homotopy equivalence
$$
\hocolim \gre{\mathscr D} \simeq \gre{\GC{\mathscr D}}.
$$
\end{lem}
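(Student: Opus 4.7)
The plan is to establish the equivalence by realising both sides as geometric realisations of bisimplicial sets and then applying a fibered version of Quillen's Theorem~A to the forgetful projection $\pi\colon \GC \mathscr D \to \Ic$. Both spaces naturally map to $\gre{\Ic}$: the homotopy colimit by its very definition (via Segal's projection), and $\gre{\GC \mathscr D}$ through $\gre{\pi}$. The goal is to produce a map between them commuting with these projections and to verify it is a weak equivalence fiberwise.

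First, I would rewrite $\hocolim \gre{\mathscr D}$ via the Bousfield--Kan simplicial replacement, whose $n$-th level is
$$
\coprod_{i_0 \to i_1 \to \cdots \to i_n} \gre{\mathscr D(i_0)},
$$
with face maps coming from composition in $\Ic$ together with the functoriality of $\mathscr D$ applied at the initial vertex, and degeneracies inserting identity morphisms. Since each $\gre{\mathscr D(i_0)}$ is itself the realisation of a nerve, a second nerve construction promotes the simplicial replacement to a bisimplicial set whose $(p,q)$-cells consist of a $p$-chain in $\Ic$ together with a $q$-chain in $\mathscr D(i_0)$; the diagonal realisation recovers $\hocolim \gre{\mathscr D}$.

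Second, I would observe that the nerve of $\GC \mathscr D$ carries a parallel bisimplicial decomposition. An $n$-simplex of $\gre{\GC \mathscr D}$ is a chain $(i_0,x_0) \to \cdots \to (i_n,x_n)$; separating the $\Ic$-component of each morphism from its scwol-component yields a chain in $\Ic$ together with, after transport by $\mathscr D$, a compatible chain in the target fibre. This gives a bisimplicial model of $\gre{\GC \mathscr D}$ equipped with a natural bisimplicial comparison map to the bar resolution above. I would then apply the row-wise criterion: a map of bisimplicial sets is a weak equivalence as soon as each row map is, after realising the other direction. The key input is the identification, for every $i\in \Ic$, of the comma category $\pi \downarrow i$ with a category that deformation retracts onto $\mathscr D(i)$ (by repeatedly contracting along the terminal morphism of each object's fibre), which is the fibered form of Quillen's Theorem~A. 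Naturality in $\mathscr D$ is automatic since every ingredient is functorial in the diagram.

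The main obstacle will be the combinatorial bookkeeping: keeping variance conventions consistent (covariant versus contravariant morphisms in the Grothendieck construction), distinguishing slice from comma categories in the fibre analysis, and verifying that the comparison map commutes with every face and degeneracy operator in both simplicial directions. Once these subtleties are settled, Quillen's theorem applied fibrewise closes the argument.
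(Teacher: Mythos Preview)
The paper does not prove this lemma: it is stated with attribution to Thomason and used as a black box. There is therefore no ``paper's own proof'' to compare against. Your sketch is essentially an outline of Thomason's original argument---the bisimplicial comparison between the Bousfield--Kan simplicial replacement and the double nerve of $\GC\mathscr D$, concluded by the realisation lemma applied row-wise, with the fibre identification $\pi\downarrow i \simeq \mathscr D(i)$ as the key input. That is the standard route and it is correct in outline; the bookkeeping caveats you list at the end are exactly the places where care is needed, but none of them hides a genuine obstruction.
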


\begin{rem}\label{rem:projGC}
  In this case the canonical projection of the homotopy colimit on the
  nerve of the index category becomes the map of polyhedral complexes
  induced by the evident functor
$$\GC \mathscr D \to \Ic;\quad\quad (i,x) \mapsto i$$
\end{rem}

\subsection{Arrangements of hyperplanes}
Let $\A$ be a locally finite arrangement of hyperplanes
in $\C^d$. We will write
$$
M(\A) := \C^d\setminus \bigcup \A
$$
for its complement.

Recall the definitions of Section \ref{sec:backarr} and, given $X\in
\Ll(\A)$, the arrangement $\A_X:=\{H\in \A \mid X\subseteq H\}$
(Equation \eqref{eq:1}). If  $\A$ is complexified, the associated real
arrangement $\A_{\mathbb R}$ induces a polyhedral cellularization of $\R^d$
with poset of {\em faces} $\Fc(\A)$, ordered by inclusion, whose maximal elements (the maximal
cells) are called {\em chambers} of $\A$. We write $\Tc(\A)$ for the
set of all chambers of $\A$. 

Notice that every $G\in \Fc(\A)$ is contained in a unique (relatively open) face
of $\A_X$, that we denote by $G_X$. One readily checks that
this defines
a poset map $ \Fc(\A) \to \Fc(\A_X)$, since $F_1\geq F_2$ implies $(F_1)_X \geq (F_2)_X$.

\subsubsection{Sign vectors and operations on faces} A standard way of dealing with such
polyhedral subdivisions is by choosing a real defining form $\ell_H$ for
every $H\in \A$ and thus defining $H^+:=\{x\in \mathbb R^d \mid
\ell_H(x)>0\}$, $H^-:=\{x\in \mathbb R^d \mid \ell_H(x)<0\}$,
$H^0:=H$.
Each face $F$ is then identified by its {\em sign vector} $\sv{F}:\A\to
\{+1,-1,0\}$ with $\sv{F}(H):=\sigma$ if and only if $F\subseteq H^{\sigma}$.
If we consider the set $\{+1,-1,0\}$ partially ordered according to
$+1 >0$, $-1>0$ and $+,1,-1$ incomparable we see that, with our notation, for any $F,G\in \Fc(\A)$ we
have 
$$
F\leq G \textrm{ if and only if, for all }H\in \A,\,\, \sv{F}(H) \leq \sv{F}(H).
$$
Also, for every $X\in \Ll$ we have that $\sv{F_X}$ is the restriction
of $\sv{F}$ to $\A_X$. Given chambers $C_1,C_2 \in \Tc(\A)$, recall the set 
$$
S(C_1,C_2)=\{H\in \A \mid \sv{C_1}(H) = - \sv{C_2}(H) \neq 0\}
$$
of hyperplanes {\em separating} $C_1$ from $C_2$.

For $F\in \Fc(\A)$ we let $\A_F:=\A_{\vert F\vert}$, where
$\vert F \vert$ denotes the affine span of $F$. This will then mean
that $H\in \A_F$ if and only if $\sv{F}(H)=0$.

\begin{df}\label{rem:identiF}
Given $F,G\in\Fc(\A)$, we define $G_F\in \Fc(\A)$ to be the face
uniquely determined by $(G_F)_{\vert F \vert}=G_{\vert F \vert}$ and $G_F \geq F$. 

In particular, there is an inclusion
$$
i_F: \Fc(\A_{F})\to \Fc(\A) 
$$
where in terms of sign vectors we have
$$
\sv{G_F}(H):=\left\{
  \begin{array}{ll}
    \sv{F}(H) & \textrm{ if } H \not\in \A_F \\
    \sv{G}(H) & \textrm{ if } H \in \A_F \\
  \end{array}
\right.
$$
and
$$
\sv{i_F(G)}(H):=\left\{
  \begin{array}{ll}
    \sv{F}(H) & \textrm{ if } H \not\in \A_F \\
    \sv{G}(H) & \textrm{ if } H \in \A_F \\
  \end{array}
\right.
$$
\end{df}

The following are some properties that show that the above objects are
well-defined, and which we list as a lemma for later reference. Their proof is a straightforward check of sign vectors.
\begin{lem}\ \label{piccolezze}
\begin{itemize}
\item[(1)] $(i_F (G))_{\vert F \vert}= G$, hence $i_F$ maps
  bijectively onto $\Fc(\A)_{\geq F}$.
\item[(2)] If $G_1\geq G_2 \in \Fc(\A_F)$, then $i_F(G_1) \geq
  i_F(G_2)$.
\item[(3)] $(G_{F_1})_{F_2} = G_{{F_1}_{F_2}}$ for all $G,F_1,F_2\in
  \Fc(\A)$.
\item[(4)] $F_G=F$ if $F\geq G$.
\end{itemize}
\end{lem}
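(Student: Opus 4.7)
All four assertions are statements about how the operations $(-)_F$ and $i_F$ act on the sign vector $\sv{(-)}:\A\to\{+1,-1,0\}$ attached to a face, so the plan is simply to reduce each claim to a componentwise check under the partial order $0 < +1$, $0 < -1$ with $+1$ and $-1$ incomparable.

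For (1) I would unpack the definition of $i_F(G)$: its sign vector agrees with $\sv{F}$ on the hyperplanes $H\notin\A_F$ and with $\sv{G}$ on those in $\A_F$. Restricting to $\A_F$ (equivalently, taking $(-)_{\vert F \vert}$) therefore recovers $\sv{G}$, which gives $(i_F(G))_{\vert F \vert}=G$. For the bijection onto $\Fc(\A)_{\geq F}$, I would first verify $i_F(G)\geq F$: on $H\notin\A_F$ the two sign vectors coincide, and on $H\in\A_F$ we have $\sv{F}(H)=0$, which is below anything. Injectivity of $i_F$ is immediate from the formula. For surjectivity, given any $G'\in\Fc(\A)_{\geq F}$, set $G:=G'_{\vert F \vert}$; the characterization of $G'\geq F$ in terms of sign vectors forces $\sv{G'}(H)=\sv{F}(H)$ for $H\notin\A_F$, so $i_F(G)=G'$.

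For (2), I would compare $\sv{i_F(G_1)}$ and $\sv{i_F(G_2)}$ on each $H\in\A$: on $H\notin\A_F$ both equal $\sv{F}(H)$, so they are equal, and on $H\in\A_F$ they equal $\sv{G_1}(H)$ and $\sv{G_2}(H)$ respectively, which by hypothesis satisfy $\sv{G_2}(H)\leq\sv{G_1}(H)$. Hence $\sv{i_F(G_2)}\leq\sv{i_F(G_1)}$ coordinatewise, i.e.\ $i_F(G_1)\geq i_F(G_2)$.

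For (3), I would observe that by (1) both sides are the unique face in $\Fc(\A)_{\geq F_2}$ whose restriction to $\A_{\vert F_2 \vert}$ coincides with $(G_{F_1})_{\vert F_2 \vert}=G_{\vert{F_1}_{F_2}\vert}$, so it suffices to prove $\vert{F_1}_{F_2}\vert\subseteq\vert F_2\vert$ and compare sign vectors componentwise according to whether $H\in\A_{F_2}$, $H\in\A_{F_1}\setminus\A_{F_2}$, or neither. Finally (4) is the direct remark that if $F\geq G$ then the affine span of $F$ is contained in that of $G$, so $F_{\vert G \vert}=F$ by definition, and $F$ itself is already $\geq G$.

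The only possible source of friction is keeping the two affine-span operations $(-)_X$ (for $X\in\Ll$) and $(-)_F$ (for $F\in\Fc(\A)$) cleanly apart in the bookkeeping for (3); once this is done each identity is one line of sign vector arithmetic.
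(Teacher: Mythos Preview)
Your approach is exactly what the paper does: the paper's own proof consists of the single sentence ``Their proof is a straightforward check of sign vectors,'' and you have simply fleshed this out. The componentwise verifications you outline for (1)--(3) are correct.

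One small slip in your argument for (4): when $F\geq G$ it is the affine span of $G$ that is contained in that of $F$, not the other way around, and $F_{\vert G\vert}$ (a face of the subarrangement $\A_G$) is not literally equal to $F$. But you do not actually need that remark: the point is that $F$ itself satisfies the two defining conditions of $F_G$, namely $(F)_{\vert G\vert}=F_{\vert G\vert}$ (tautologically) and $F\geq G$ (by hypothesis), so uniqueness gives $F_G=F$. Alternatively, the one-line sign vector check works: for $H\notin\A_G$ we have $\sv{G}(H)\neq 0$ and $F\geq G$ forces $\sv{F}(H)=\sv{G}(H)$, so $\sv{F_G}=\sv{F}$.
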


\begin{df}
  Let $\A$ be a complexified real central arrangement, $\mathcal X\subset \A$ and
  $\sigma\in \{\pm 1,0\}$. Define
$$
\Delta^{\sigma} (\A;\mathcal X):=\{F\in \Fc(\A) \mid \sv{F}(H) = \sigma \textrm{ if }H\in
X\}
$$ 

For disjoint
  (possibly empty) subsets $N,Z,P\subseteq \A$ define then
$$
\Delta(\A;N,Z,P):= \Delta^{-1}(\A;N)\cap \Delta^{0}(\A;Z)\cap \Delta^{+1}(\A;P).
$$
\end{df}

We will have occasional use for the following result.

\begin{lem}[See Proposition 4.3.6 of \cite{BLSWZ}]\label{lem:conv}
  Let $\A$ be a nonempty, central complexified arrangement. The subposet $\Delta(\A;N,Z,P)$, if not empty,  is contractible.
\end{lem}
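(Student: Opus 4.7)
The first move is to make the geometric content of the poset explicit. Choose defining linear forms $\ell_H$ for every $H\in\A$ and set
$$U := \bigcap_{H\in N} H^{-} \cap \bigcap_{H\in Z} H \cap \bigcap_{H\in P} H^{+}.$$
As an intersection of convex sets, $U$ is convex; it is relatively open inside the affine subspace $\bigcap_{H\in Z}H$. Since each face of $\A$ has a single sign vector, one verifies immediately that $F\in \Delta(\A;N,Z,P)$ iff $F\subseteq U$, and that otherwise $F\cap U=\emptyset$ (because $F$ already sits inside a halfspace or hyperplane disjoint from the one prescribed for some $H\in N\cup Z\cup P$). Hence the faces in $\Delta(\A;N,Z,P)$ partition $U$.

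Next I would identify $\gre{\Delta(\A;N,Z,P)}$ with $U$. The hyperplanes of $\A$ that meet but do not contain $U$ cut $U$ into relatively open polyhedral pieces, and by the previous step these pieces are precisely the faces in $\Delta(\A;N,Z,P)$. Moreover for such a face $F$, its closure inside $U$ decomposes as $\bigsqcup\{G : G\leq F,\ G\in \Delta(\A;N,Z,P)\}$. Thus $\Delta(\A;N,Z,P)$ is the face poset of a regular polyhedral decomposition of $U$, and by the standard identification of the order complex of such a decomposition with its barycentric subdivision one obtains a homeomorphism $\gre{\Delta(\A;N,Z,P)}\cong U$.

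The conclusion then follows at once: the hypothesis $\Delta(\A;N,Z,P)\neq\emptyset$ means $U\neq\emptyset$, so $U$ is a nonempty convex set, hence contractible, and therefore so is $\gre{\Delta(\A;N,Z,P)}$.

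\textbf{Main obstacle.} The delicate point is the identification $\gre{\Delta(\A;N,Z,P)}\cong U$. Because $U$ is not closed in $\R^d$ (it is open along the hyperplanes of $N\cup P$), the subposet $\Delta(\A;N,Z,P)$ is neither up- nor down-closed in $\Fc(\A)$, and its cells do not assemble into a closed subcomplex of the polyhedral decomposition of $\R^d$ induced by $\A$. One must therefore work with the polyhedral structure induced on $U$ itself --- equivalently, first restrict $\A$ to the flat $\bigcap_{H\in Z}H$ and then intersect with the open convex region cut out by $N\cup P$ --- and verify via sign vectors that the face poset of this structure is exactly $\Delta(\A;N,Z,P)$. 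This check is routine but requires care with the non-closedness of $U$.
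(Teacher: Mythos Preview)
Your geometric identification of $\Delta(\A;N,Z,P)$ with the set of faces contained in the convex region $U$ is correct and is the right starting point. However, the claimed homeomorphism $\gre{\Delta(\A;N,Z,P)}\cong U$ is false as stated. Since $\A$ is a finite central arrangement, $\Fc(\A)$ is a finite poset, so $\gre{\Delta(\A;N,Z,P)}$ is a finite simplicial complex and in particular compact; but $U$ is an unbounded (relatively open) convex cone whenever $N\cup P\neq\emptyset$. For a concrete instance, take two lines through the origin in $\R^2$ with $N$ a single line and $Z=P=\emptyset$: then $U$ is an open half-plane while $\gre{\Delta}$ is a one-dimensional complex with three vertices. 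The ``standard identification'' you invoke (order complex $=$ barycentric subdivision) applies to regular CW- or polyhedral complexes with compact cells, and that hypothesis fails here. Your ``Main obstacle'' paragraph flags the non-closedness of $U$, but the actual obstruction is non-compactness of the cells, which restricting to the flat $\bigcap_{H\in Z}H$ does not cure.

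The fix is to replace the homeomorphism by a homotopy equivalence, and this is essentially what the paper does, though by a different route. The paper first disposes of the case $N=P=\emptyset$ (where $\bigcap\A$ is a minimum of the poset), and otherwise passes to the sphere: by \cite[Proposition 4.3.6]{BLSWZ} the closed region $\Delta^-_{N\cup Z}\cap\Delta^+_{P\cup Z}$ is a shellable PL-ball, and $\Delta(\A;N,Z,P)$ is exactly the poset of its interior cells. One then uses the general fact that the order complex of the interior cells of a PL-ball $B$ is the maximal full subcomplex of $\Delta(\Fc(B))$ disjoint from the boundary subcomplex, hence a deformation retract of the open ball. Your convexity argument can be repaired along the same lines: intersect $U$ with a sphere (or a large ball) centred at $\bigcap\A$ to obtain a compact convex body whose interior face poset is $\Delta(\A;N,Z,P)$, and then run the same deformation-retract argument. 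What your approach buys, once repaired, is that it makes the underlying reason (convexity) transparent without invoking shellability; what the paper's approach buys is that the compactness issue is handled automatically by the spherical picture and the citation to \cite{BLSWZ}.
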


\begin{proof}
  If $N=P=\emptyset$, the posets under consideration contain the unique
  minimal element $\cap \A$ thus are contractible. Otherwise, our
  $\Delta(\A;N,Z,P)$ corresponds to the poset of cells in the interior
  of the shellable PL-ball denoted by  $\Delta^-_{N\cup Z} \cap
  \Delta^+_{P\cup Z}$ in
  \cite[Proposition 4.3.6.]{BLSWZ}, and we conclude with the general
  fact that the order complex of the poset $\mathcal P$ of cells of the interior of
  a PL-ball $B$ is contractible. This last fact can be proved as follows:
  the order complex of $\mathcal P$ is the maximal subcomplex of all
  cells of $\Delta
  (\Fc(B))$ that are disjoint from the (full) subcomplex $\Delta(\Fc(B)
  \setminus \mathcal P)$ (which triangulates the boundary of $B$). Hence
  $\Delta(\mathcal P)$ is a retract of the interior of $B$ and, as such, contractible.
\end{proof}

\subsection{A combinatorial Brieskorn lemma}
  The data of $\Fc(\A)$ can be used to construct a regular CW-complex
  due to Salvetti \cite{Salvetti} which embeds in $M(\A)$ as a
  deformation retract. This complex is called {\em Salvetti complex}
  of $\A$ and denoted $\Sal(\A)$. Its face category (in fact, a poset)
  $\Sc(\A):=\Fc(\Sal(\A))$ can be described as follows:

\begin{align*}
  \Sc(\A) = \{[F,C]\in \Fc(\A) \times \Tc(\A) \mid F\leq C \textrm{ in
  }\Fc(\A) \} \\
  [F,C] \geq [F',C'] \textrm{ if } F\leq F', \, C_{F'}=C'.
\end{align*}

\begin{df}\label{def:SC}
  From now in this section we will assume that the arrangement $\A$ is
  {\em central}, i.e., that $\cap \A \neq
  \emptyset$. Then, letting $P:=\cap \A$, the complex $\Sal(\A)$ can be decomposed as a union of
  (combinatorially isomorphic) closed polyhedral cells of dimension
  $d$, corresponding to the pairs $[P,C]$ with $C\in \Tc(\A)$. We
  define subposets
$$
\Sc_C:= \Sc_{\leq [P,C]}
\quad\textrm{for} \quad
C\in \Tc(\A)
$$
corresponding to the faces of the closure of the maximal cells $[P,C]$.
\end{df}

Our next goal will be to offer a combinatorial version of 
Lemma \ref{lem:brieskorn}, i.e., to express Brieskorn's map as induced
by poset maps between
Salvetti complexes. 

 \begin{df}\label{df:jb}
   Given $X\in \Ll(\A)$ define
   $$
   b_X:\Sc(\A) \to \Sc(\A_X), [G,C] \mapsto [G_X,C_X]
   $$
Moreover, for every $F\in \Fc(\A)$  we have the following natural inclusion
 of posets, well-defined
 by Lemma \ref{piccolezze}.(2).
$$j_F:
 \Sc(\A_F) \to \Sc(\A), \quad [G,C] \mapsto [i_F(G), i_F(C)].
$$
 \end{df}

The following is then a combinatorial version of Brieskorn's Lemma.

\begin{prop}[Combinatorial Brieskorn Lemma]\label{cor:cbl}
Let $\A$ be a central complexified arrangement of hyperplanes and for every $X\in\Ll(\A)$ choose an  $F(X)\in \Fc(\A)$ with
$\vert F(X)\vert = X$.

 The maps of posets $ j_{F(X)}, b_X$ induce an
  injective map $b_X^* : H^*(\Sal(\A_X)) \to H^*(\Sal(\A))$ and a surjective map 
  $j_{F(X)}^*: H^*(\Sal(\A)) \to H^*(\Sal(\A_F))$, such that $j_F^* \circ
  \hat b_X = \id_{H^*(\Sal(\A_X))}$.

Moreover,  Then for all $k$ the inclusion
$$
j_k (= \sqcup_{\rk(X)=k} j_{F(X)}): \bigsqcup_{\rk(X)=k} \Sc(\A_{F(X)}) \to \Sc(\A)
$$ 
induces the Brieskorn isomorphism
$$
(b^k)^{-1}: H^k(\Sc(\A)) \to \bigoplus_{\rk(X)=k} H^k(\Sc(\A_{F(X)})) = \bigoplus_{\rk(X)=k} H^k(\Sc(\A_{X})). 
$$
In particular, the map induced in cohomology by $j_{F(X)}$ does not depend
on the choice of $F(X)$ among the maximal cells of its affine span.
\end{prop}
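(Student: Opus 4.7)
The plan is to reduce the statement to the classical Brieskorn Lemma \ref{lem:brieskorn} via a combinatorial splitting at the level of posets.

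First I would verify the identity $b_X \circ j_{F(X)} = \id$ directly on the poset $\Sc(\A_{F(X)})$. Since $|F(X)| = X$ one has $\A_{F(X)} = \A_X$. For $[G,C] \in \Sc(\A_X)$, the sign vector description in Definition \ref{rem:identiF} shows that the restriction of $\sv{i_{F(X)}(G)}$ to $\A_X = \A_{F(X)}$ coincides with $\sv{G}$; hence $(i_{F(X)}(G))_X = G$, and likewise $(i_{F(X)}(C))_X = C$. Therefore $b_X(j_{F(X)}([G,C]))=[G,C]$ as a functor. Passing to nerves and then to cohomology gives $j_{F(X)}^* \circ b_X^* = \id$, which already yields the injectivity of $b_X^*$ and the surjectivity of $j_{F(X)}^*$.

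The main step is then the identification of $b_X^*$ with the classical Brieskorn inclusion of Lemma \ref{lem:brieskorn}. I would argue that Salvetti's construction is natural in the arrangement, in the sense that the deformation retraction $M(\A) \simeq \gre{\Sc(\A)}$ fits into a homotopy commutative diagram with the topological inclusion $M(\A) \hookrightarrow M(\A_X)$ and the cellular map $\gre{\Sc(\A)} \to \gre{\Sc(\A_X)}$ induced by $b_X$. Concretely, a point lying in the Salvetti cell indexed by $[F,C]$ in $M(\A)$ sits, viewed in $M(\A_X)$, inside the cell indexed by $[F_X, C_X]$. I expect this naturality check to be the main technical obstacle, although it is largely standard and essentially already implicit in Salvetti's original embedding; it can be carried out by inspecting the explicit formula for the Salvetti retraction and restricting to hyperplanes of $\A_X$.

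Once this identification is in place, the conclusion is immediate. Lemma \ref{lem:brieskorn} provides that $\bigoplus_{\rk(X)=k} b_X^*$ is an isomorphism onto $H^k(\Sc(\A))$, and the splittings $j_{F(X)}^* \circ b_X^* = \id$ from the first step force the assembled map $j_k^*$ to be the two-sided inverse of $\bigoplus b_X^*$, namely the Brieskorn isomorphism $(b^k)^{-1}$. Finally, independence of $j_{F(X)}^*$ from the choice of $F(X)$ among the maximal cells of affine span $X$ is automatic, since the inverse of an isomorphism is unique.
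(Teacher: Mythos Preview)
Your overall strategy coincides with the paper's: the identity $b_X\circ j_{F(X)}=\id$ on $\Sc(\A_X)$ is checked on sign vectors exactly as you indicate, and the heart of the matter is to show that the poset map $b_X$ models the topological inclusion $M(\A)\hookrightarrow M(\A_X)$ up to homotopy, after which the classical Brieskorn Lemma finishes the argument.

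Where your proposal diverges from the paper is in the treatment of this ``naturality'' step, which you describe as ``largely standard and essentially already implicit in Salvetti's original embedding.'' This is precisely the step the paper spends its effort on, and it is not as straightforward as your sketch suggests. Your concrete claim---that a point in the cell $[F,C]$ of $\Sal(\A)$, viewed in $M(\A_X)$, lies in the cell $[F_X,C_X]$ of $\Sal(\A_X)$---is not well-posed as stated: the embedded complex $\Sal(\A)\subset M(\A_X)$ is not a priori a subcomplex of $\Sal(\A_X)$, and there is no canonical ``explicit formula'' for the Salvetti retraction that one can simply restrict to $\A_X$. Making this compatible would require either carefully coordinated choices of embeddings or a genuine homotopy argument.

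The paper's route is instead to pass through the Orlik--Terao model: it frames $\A$ by the real arrangement $\HH=\A_1\cup\A_2$ in $\R^{2d}$, realizes both $M(\A)$ and $M(\A_X)$ as order complexes of the posets $\MM_\HH(\A)$ and $\MM_\HH(\A_X)$ of pairs of faces, and uses the known homotopy equivalence $\phi:\MM_\HH(\A)\to\Sc(\A)^{op}$ from \cite[Chapter 5]{OT}. The new ingredient is the map $\psi:\MM_\HH(\A_X)\to\Sc(\A_X)^{op}$, $(F,G)\mapsto[F_X,(G_F)_X]$, which makes the obvious square with $\iota$ and $b_X$ commute; one then shows $\psi$ is a homotopy equivalence by checking that its Quillen fibers are posets of the form $\Delta(\HH;N,Z,P)$ and invoking Lemma~\ref{lem:conv}. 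So the contractibility lemma for sign-prescribed face sets is the actual engine here, not a direct inspection of the Salvetti retraction.
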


\begin{proof}
  For the first part of the claim, notice (e.g., by a check of sign vectors) that the composition
 $b_X\circ j_{F(X)}$ is the identity on $\Sc(\A_{F(X)})$. 
 For the second part we prove that, in fact, the map $b_X$ is homotopic to the
  inclusion $M(\A) \subseteq M(\A_X)$.

First of all, notice that the radial map $\rho: z\mapsto z/\vert z \vert$
defines a homotopy between the inclusion $M(\A)\subseteq M(\A_X)$ and
the inclusion $S \setminus \A \subseteq S\setminus \A_X$, where $S$
denotes the unit sphere in $\mathbb C^d \simeq \mathbb R^{2d}$.

We follow \cite[Chapter 5]{OT} and consider the arrangements $\A$ and $\A_X$ as framed by the
arrangement $$\HH=\A_1 \cup \A_2 := \{H \times \mathbb R^d \mid H
\in \A_{\R}\}\cup \{ \mathbb R^d\times H \mid H
\in \A_{\R}\}$$ in $\R^{2d}$. This defines, as usual, a
  cellularization of $\R^{2d}$ with poset of faces $\Fc(\HH) \simeq \Fc(\A)
  \times \Fc(\A)$ (product of posets, see
  e.g. \cite[Section 3.2]{Stanley}) 
and, after barycentric subdivision, a
  triangulation $T_\HH$ of the sphere $S$ realizing the order complex
  of $\Fc(\HH) \setminus \{ \hat 0\}$. The intersection $S\cap \bigcup \A$ (resp. $S\cap
  \bigcup \A_X$) is a full
  subcomplex $N_\A$ (resp. $N_{\A_X}$) of $T_\HH$, thus $N_{\A_X}
  \subseteq N_{\A}$ as a full subcomplex.
  Let $M_\HH(\A)$ be the biggest subcomplex of $T_\HH$ which is
  disjoint to $N_\A$, and similarly for $M_\HH(\A_X)$. Then,
  $M_\HH(\A) \subseteq M_\HH(\A_X)$ is a full subcomplex.

  It is a standard fact (see e.g. \cite[Lemma 70.1]{Munkres}) that
  $T_\HH \setminus \bigcup \A_X$ deformation retracts onto $M_\HH(\A_X)$
  (say, by a retraction $f_X$) and $M_\HH(\A_X) \setminus \A$ deformation retracts
  onto $M_\HH (\A)$ (say, by $f$). We then have that the inclusion
  $M_\HH(\A) \subseteq M_\HH(\A_X)$ is homotopic to the original
  inclusion $M(\A) \subseteq M(\A_X)$.

  Now notice that the simplicial complexes $M_\HH(\A_X)$, $M_\HH(\A)$
  are in fact realizations of the order complexes of the posets 
$$
\MM_\HH(\A_X) =\{ (F,G)\in \Fc(\A) \times \Fc(\A) \mid \A_F \cap \A_G
\cap \A_X=\emptyset\}
$$
$$
\MM_\HH(\A) =\{(F,G)\in \Fc(\A) \times \Fc(\A) \mid \A_F \cap \A_G =\emptyset\}
$$
and the inclusion of complexes is induced by the inclusion of posets
$\iota: \MM_\HH(\A) \to \MM_\HH(\A_X)$. We summarize by saying that
the following diagram commutes up to homotopy

$$\begin{CD}
  M(\A) @>{\rho_{\vert M(\A)}}>> T_\HH \setminus N_\HH(\A) @>{f_X}>>
  M_\HH(\A_X)\setminus N_\HH(\A)  @>{f}>> M_\HH (\A)\\ 
  @V{\subseteq}VV @V{\subseteq}VV @V{\subseteq}VV @V{\gre{\iota}}VV\\
  M(\A_X) @>{\rho}>> T_\HH \setminus N_\HH(\A_X) @>{f_X}>>
  M_\HH(\A_X)  @= M_\HH (\A_X) \\ 
\end{CD}
$$

In order to study the map $\gre{\iota}$ further, it is enough to argue
at the level of posets.

In \cite[Chapter 5]{OT} is proved that the map
$$\phi: \MM_\HH(\A) \to \Sc(\A)^{op}, \quad (F,G) \mapsto [F,G_F] $$
is a homotopy equivalence.

We define a map
$$\psi: \MM_\HH(\A_X) \to \Sc(\A_X)^{op}, \quad (F,G) \mapsto [F_X,(G_F)_X] $$
so that, by definition, the following diagram commutes:

$$
\begin{CD}
  \MM_\HH(\A) @>{\phi}>> \Sc(\A) ^{op} \\
  @V{\iota}VV @V{b_X}VV\\
  \MM_\HH(\A_X) @>{\psi}>> \Sc(\A_X)
\end{CD}
$$

Now it
is enough to prove that $\psi$ is a homotopy equivalence, and we will
then have proved that the geometric map $\gre{b_X}$ induced by $b_X$ is homotopic to $\gre{\iota}$, which
in turn is homotopic
to the inclusion $M(\A)\subseteq M(\A_X)$.

To prove that $\psi$ is a homotopy equivalence, consider some
$[F,C]\in \Sc(\A_X)^{op}$ and $(F',G)\in   \MM_\HH(\A_X) $ such that $\psi(F',G) \geq
[F,C]$ in $\Sc(\A_X)^{op}$. This is the case exactly if $F'_X\geq F$ in $\Fc(\A)$ and
$(G_{F'})_X=C_{(F')_X}$. 

In terms of sign vectors, this will be
verified exactly if:
\begin{itemize}
\item $\sv{F'}(H)\geq \sv{F}(H)$ for all $H\in \A_X$ 
\item $\sv{G}(H) = \sv{C}(H)$ for all $H\in\A_X\cap\A_{F'}$
\item $\A_{F'}\cap \A_X \cap \A_G =\emptyset$
\end{itemize}
where the last condition just ensures that indeed $(F',G)\in \MM_\HH(\A_X)$.

Going back, we see that, under the isomorphism $(\Fc(\A))^2 \simeq
\Fc(\HH)$ these $(F',G)$ are exactly the faces $\widehat F$ of $\HH$
with 
$$
\sv{\widehat F}(H\times \R^d) = \gamma_F(H) 
\textrm{ for } H\in \A_X \setminus \A_F, \quad
\sv{\widehat F}(\R^d \times H) = \gamma_C(H) 
\textrm{ for } H\in \A_X,
$$
and thus $\psi^{-1}(\Sc(\A_X)_{\geq}[F,C])$ is a subposet of
$\Fc(\HH)$ consisting of all faces with prescribed sign on a certain
set of hyperplanes. This set is of the form $\Delta (\HH;N,Z,P)$, nontrivial because $X\neq \hat 0$ and
nonempty because it contains $(F,C)$, and
their order complex is thus contractible by Lemma \ref{lem:conv}.
\end{proof}

We next prove a proposition which expresses, in the language of posets, the
fact that given any face $F$ of a central arrangement, the union of the cells $[P,C]$
with $C$ running through all chambers adjacent to $F$ is a subcomplex
of $\Sal(\A)$ homotopy equivalent to $M(\A_F)$.

\begin{df}\label{def:j}   Let $\A$ be a central, complexified arrangement of hyperplanes, and
  write $P$ for the minimal element of $\Fc(\A)$. We define a subposet
  of $\Sc(\A)$ as
$$\SF{F}(\A) := \bigcup_{C\geq F} \Sc_{C}$$
(where we view $\Sc_C$ as a subposet of $\Sc(\A)$ as in Definition \ref{def:SC}.), 
consider the restriction 
$$
j_0^F: \Sc(\A_F) \to \SF{F}(\A)
$$
of the map $j_F$ of Definition \ref{df:jb} and define 
$$
 \xi_F: \SF{F}(\A) \to  \Sc(\A_{\vert F\vert}), \quad [G,C] \mapsto [G_{\vert F\vert},C_{\vert F\vert}].
$$
\end{df}

\begin{prop}\label{xi:homeq}
  The poset maps $j_0^F$, $\xi_F$ are homotopy inverse to each other.
\end{prop}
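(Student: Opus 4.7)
The plan is to prove that $\xi_F$ is a homotopy equivalence via Quillen's Theorem A; combined with the trivial identity $\xi_F \circ j_0^F = \operatorname{id}_{\Sc(\A_F)}$, this will force $j_0^F$ to be its homotopy inverse. The identity $\xi_F \circ j_0^F = \operatorname{id}$ is immediate from Lemma \ref{piccolezze}(1): for $[G,C] \in \Sc(\A_F)$ one has $(\xi_F \circ j_0^F)([G,C]) = [(i_F(G))_{|F|}, (i_F(C))_{|F|}] = [G,C]$.

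For Quillen's Theorem A, I would fix $[H,E] \in \Sc(\A_F)$ and analyze the fiber $\xi_F^{-1}(\Sc(\A_F)_{\leq [H,E]})$. Unraveling the Salvetti order on $\Sc(\A_F)$ gives the two conditions $G_{|F|} \geq H$ and $E_{G_{|F|}} = D_{|F|}$. Combining these with the two conditions already built into $[G,D] \in \SF{F}(\A)$---namely $\sv{D}(H') = \sv{G}(H')$ for $H' \notin \A_G$ (forced by $G \leq D$) and $\sv{D}(H') = \sv{F}(H')$ for $H' \in \A_G \setminus \A_F$ (forced by the existence of a chamber $C \geq F$ with $D = C_G$)---a sign-vector computation shows that $D$ is entirely determined by $G$: it has sign $\sv{E}$ on $\A_G \cap \A_F$, sign $\sv{F}$ on $\A_G \setminus \A_F$, and sign $\sv{G}$ on $\A \setminus \A_G$. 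Consequently the fiber is in order-reversing bijection, via $G \mapsto [G,D]$, with the subposet
$$
\{G \in \Fc(\A) \mid \sv{G}(H') \geq \sv{H}(H') \text{ for all } H' \in \A_F\} = \Delta(\A; N, \emptyset, P),
$$
where $N = \{H' \in \A_F \mid \sv{H}(H') = -1\}$ and $P = \{H' \in \A_F \mid \sv{H}(H') = +1\}$. This set is nonempty (for instance $i_F(H)$ lies in it) and hence contractible by Lemma \ref{lem:conv}.

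Quillen's Theorem A therefore yields that $\xi_F$ is a homotopy equivalence, and then the identity $\xi_F \circ j_0^F = \operatorname{id}$ identifies $j_0^F$ as its homotopy inverse. The main delicate point is the sign-vector bookkeeping that pins the chamber $D$ down uniquely in terms of the face $G$; one must also check that the prescribed $D$ genuinely lifts to an element of $\SF{F}(\A)$, i.e.\ is of the form $C_G$ for a chamber $C \geq F$ (which follows by constructing $C = i_F(E')$ for any chamber $E'$ of $\A_F$ with $E'_{G_{|F|}} = E_{G_{|F|}}$). Once this is in place, the reduction to $\Delta(\A; N, \emptyset, P)$ and the appeal to Lemma \ref{lem:conv} are routine.
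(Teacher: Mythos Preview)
Your argument is correct, and its core coincides with the paper's: the contractibility of the ``down-fibers'' $\xi_F^{-1}(\Sc(\A_F)_{\leq [H,E]})$, established by the same sign-vector reduction to a set of the form $\Delta(\A;N,\emptyset,P)$ and Lemma~\ref{lem:conv}, is exactly what the paper proves as Lemma~\ref{lem:fl}. The only genuine difference is in how this fact is packaged. The paper uses the Carrier Lemma: it defines the carrier $\alpha(\sigma)=\gre{\xi_F^{-1}(\Sc(\A_F)_{\leq \max \xi_F(\sigma)})}$, checks that it carries both $\id$ and $j_0^F\circ\xi_F$, and concludes directly that $j_0^F\circ\xi_F\simeq \id$. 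You instead invoke Quillen's Theorem~A to get that $\xi_F$ is a homotopy equivalence, and then observe that any one-sided inverse of a homotopy equivalence is automatically a two-sided homotopy inverse. Both routes are standard and rest on the same contractibility input; yours is perhaps a touch more economical here, while the Carrier-Lemma formulation is closer in spirit to the carrier arguments used later in the paper (Lemma~\ref{CoCaMa} and the proof of Theorem~\ref{lem:coherent}). A small simplification: in your last verification that the prescribed $D$ really arises as $C_G$ with $C\geq F$, the plain choice $C=i_F(E)$ already works, so there is no need to pass through an auxiliary $E'$.
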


The proof of this proposition will rest on some technical facts about
the maps $\xi_F$ that
we prove as separate lemmas for later reference.

\begin{lem}\label{lem:preim}
  Let $F\in \Fc(\A)$, $C,G\in\Fc(\A_{\vert F \vert})$ with $C$ a chamber. Then
  $$
  \xi_F^{-1}([G,C]) =\{[K,\iF{F}{C}_K] \in \Sc_C \mid K_{\vert F\vert}=G\}. 
  $$
\end{lem}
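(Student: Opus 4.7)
The proof is essentially a sign-vector verification, unpacking the definitions of $\xi_F$, $i_F$, and the operation $(\,\cdot\,)_K$. Recall that by Definition \ref{def:j}, an element $[K,D] \in \SF{F}(\A)$ is precisely a pair with $K\leq D$ such that $D = (C')_K$ for some chamber $C' \geq F$, and $\xi_F([K,D]) = [K_{\vert F\vert},D_{\vert F\vert}]$. So the containment $\xi_F^{-1}([G,C]) \supseteq \{[K,i_F(C)_K] \mid K_{\vert F\vert}=G\}$ and its reverse must each be checked.

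For the inclusion $\supseteq$, I would fix $K$ with $K_{\vert F\vert}=G$ and observe that since $i_F(C)\geq F$ is a chamber of $\A$, the pair $[K, i_F(C)_K]$ belongs to $\Sc_{i_F(C)} \subseteq \SF{F}(\A)$ by taking $C'=i_F(C)$. It remains to show $(i_F(C)_K)_{\vert F\vert}=C$. Using the sign-vector description of $i_F$ and of $(\,\cdot\,)_K$ from Definition \ref{rem:identiF}, for $H\in \A_F \cap \A_K$ one reads $\sv{i_F(C)_K}(H)=\sv{i_F(C)}(H)=\sv{C}(H)$, while for $H\in \A_F\setminus \A_K$ one reads $\sv{i_F(C)_K}(H)=\sv{K}(H)$; the equality $\sv{K}(H)=\sv{C}(H)$ then follows from $\sv{K_{\vert F\vert}}=\sv{G}\leq \sv{C}$ on $\A_F$ together with $\sv{K}(H)\neq 0$ for $H\notin \A_K$, which forces equality in the poset of signs.

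For the inclusion $\subseteq$, I would start from $[K,D]\in \SF{F}(\A)$ with $\xi_F([K,D])=[G,C]$. By definition, $K_{\vert F\vert}=G$ is automatic, and there is a chamber $C'\geq F$ with $D=(C')_K$. I would then compare $\sv{D}$ with $\sv{i_F(C)_K}$ on three disjoint subsets of $\A$: on $\A\setminus\A_K$ both agree with $\sv{K}$; on $\A_K\cap \A_F$ both agree with $\sv{C}$ (using $D_{\vert F\vert}=C$ on the one side and the definition of $i_F$ on the other); on $\A_K\setminus\A_F$ one has $\sv{D}(H)=\sv{C'}(H)=\sv{F}(H)$, the last equality because $C'\geq F$ and $\sv{F}(H)\neq 0$ for $H\notin \A_F$, matching $\sv{i_F(C)_K}(H)=\sv{i_F(C)}(H)=\sv{F}(H)$. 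This forces $D=i_F(C)_K$.

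The only mild obstacle is the case $H\in \A_F\setminus \A_K$ in Step 1, where one must combine $G\leq C$ in $\Fc(\A_{\vert F\vert})$ with the fact that $H\notin \A_K$ forces $\sv{K}(H)\neq 0$; once noted, the rest is a transparent check. No further ingredients beyond the already-recorded properties in Lemma \ref{piccolezze} and the sign-vector recipes of Definition \ref{rem:identiF} are needed.
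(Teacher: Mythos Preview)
Your proof is correct and follows essentially the same approach as the paper's: a case-by-case sign-vector verification of both inclusions. The only cosmetic difference is that for the inclusion $\supseteq$ the paper invokes Lemma~\ref{piccolezze}.(3) to compute $(\iF{F}{C}_{K})_{\vert F\vert}=\iF{F}{C}_{K_{\vert F\vert}}=C_G=C$ in one line, whereas you unpack this identity directly via sign vectors; and for $\subseteq$ the paper organizes the case split by whether $\sv{K}(H)$ and $\sv{F}(H)$ vanish, which is equivalent to your partition by membership in $\A_K$ and $\A_F$.
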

\begin{proof}
  We first prove the right-to-left inclusion. For $K$ such that
  $K_{\vert F\vert }=G$ we have $$\xi_F([K,\iF{F}{C}_K])=[K_{\vert F\vert },(\iF{F}{C}_{K})_{\vert F\vert }]=[K_{\vert F\vert },\iF{F}{C}_{K_{\vert F\vert }}]=[K_{\vert F\vert },C_G]=[G,C]$$
  where in the second equality we used Lemma \ref{piccolezze}.(3) and in the last
  equality simply the fact that by definition $C\geq G$.

  Now to the left-to-right inclusion. Consider $K\in\Fc(\A)$ and
  $R\in\Tc(\A)$ with $K\leq R$ and such that
  $\xi_F([K,R])=[G,C]$. Immediately by definition we have $K_F=G$,
  and we are left with proving that $R=\iF{F}{C}_K$. For that, we check the
  definitions.
  \begin{itemize}
  \item[-] If  $\sv{K}(H)\neq 0$, $\sv{\iF{F}{C}_K}(H) = \sv{K}(H)$
    and, since $R\geq K$, $\sv{K}(H)=\sv{R}(H)$.
  \item[-] If $\sv{K}(H) =0$,
 $$
\sv{\iF{F}{C}_K}(H) = \left\{\begin{array}[l]{ll}
                 \!\! \sv{F}(H) & \!\textrm{if } \sv{F}(H) \neq 0,\\
        \!\!\sv{\iF{F}{C}}(H)\!=\!\sv{R}(H) & \!\textrm{else, because
        } C\!=\!R_{\vert F\vert}.
      \end{array}\right.
 $$

It remains to see that $\sv{F}(H)=\sv{R}(H)$ when $\sv{K}(H)=0$ and
$\sv{F}(H)\neq 0$. Indeed, since $[K,R]\in \Sc^F(\A)$, it must be
$R=(C')_K$ (hence $\sv{R}(H)=\sv{C'}(H)$ when $\sv{K}(H)=0$) for some
$C'\geq F$ (thus $\sv{C'}(H) = \sv{F}(H)$ whenever $\sv{F}(H)\neq
0$). 
  \end{itemize}
\end{proof}

\begin{cor}\label{cor:pre-cont}
  For every $S\in \Sc(\A_{\vert F\vert})$, the poset $\xi_F^{-1}(S)$ is contractible.
\end{cor}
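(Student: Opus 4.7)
The plan is to use Lemma~\ref{lem:preim} to describe $\xi_F^{-1}(S)$ as an explicit subposet of $\Sc(\A)$ and then to exhibit a least element in it; the nerve of a poset with a minimum is a cone, hence contractible, which suffices.

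Write $S = [G, C]$ with $G \in \Fc(\A_{\vert F \vert})$ and $C$ a chamber of $\A_{\vert F \vert}$. By Lemma~\ref{lem:preim}, $\xi_F^{-1}(S)$ consists of the pairs $[K, \iF{F}{C}_K]$ indexed by the faces $K \in \Fc(\A)$ with $K_{\vert F \vert} = G$ and $[K, \iF{F}{C}_K] \in \SF{F}(\A)$. First I would observe that the projection $[K, R] \mapsto K$ is an order-reversing bijection from $\xi_F^{-1}(S)$ onto its image in $\Fc(\A)$: the chamber component is forced by the first coordinate, and using Lemma~\ref{piccolezze}.(3) together with the sign-vector identity $(K_1)_{K_2} = K_2$ whenever $K_1 \leq K_2$ one sees that the Salvetti compatibility $(\iF{F}{C}_{K_1})_{K_2} = \iF{F}{C}_{K_2}$ holds automatically.

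Next, I would single out the face $K_0 := i_F(G) \in \Fc(\A)$, whose sign vector is $\sv{G}$ on $\A_F$ and $\sv{F}$ on $\A \setminus \A_F$. By Lemma~\ref{piccolezze}.(1) we have $(K_0)_{\vert F \vert} = G$, and a direct sign-vector check shows $\iF{F}{C}_{K_0} \geq F$, so $[K_0, \iF{F}{C}_{K_0}]$ genuinely belongs to $\xi_F^{-1}(S)$. I would then argue that $K_0$ is the maximum in $\Fc(\A)$ among the indexing faces: for any such $K$, on hyperplanes $H \in \A_F$ we have $\sv{K}(H) = \sv{G}(H) = \sv{K_0}(H)$, while on $H \not\in \A_F$ the requirement $\iF{F}{C}_K \geq F$ (which is precisely what makes $[K, \iF{F}{C}_K]$ belong to $\SF{F}(\A)$) forces $\sv{K}(H) \in \{0, \sv{F}(H)\}$, hence $\sv{K}(H) \leq \sv{K_0}(H)$. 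Under the order-reversing bijection above, $[K_0, \iF{F}{C}_{K_0}]$ is the minimum of $\xi_F^{-1}(S)$, and contractibility follows.

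The only delicate point is the sign-vector bookkeeping needed to identify $K_0$ as the maximum indexing face and to verify that the Salvetti chamber compatibility is automatic. An alternative route would be to stratify $\xi_F^{-1}(S)$ according to the hyperplanes $H \not\in \A_F$ on which $\sv{K}(H)$ vanishes and then assemble several applications of Lemma~\ref{lem:conv}, but the minimum-element argument sketched above is considerably more direct.
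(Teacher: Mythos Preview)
Your argument breaks down at the step where you claim that membership of $[K,\,i_F(C)_K]$ in $\SF{F}(\A)$ is equivalent to $i_F(C)_K\geq F$. This is not the definition: $\SF{F}(\A)=\bigcup_{C'\geq F}\Sc_{C'}$, so $[K,R]\in\SF{F}(\A)$ means $R=C'_K$ for \emph{some} chamber $C'\geq F$ of $\A$, and one can always take $C'=i_F(C)$ itself. Hence the condition is automatically satisfied for every $K$ with $K_{\vert F\vert}=G$, and there is no constraint whatsoever on $\sv{K}(H)$ for $H\notin\A_F$. Concretely, in $\mathbb R^2$ with $\A=\{H_1,H_2\}$ two coordinate lines and $F$ the positive $x$-axis (so $\A_F=\{H_1\}$), take $G=C$ the upper half-plane of $\A_F$: then the second quadrant $K$ (with $\sv{K}(H_2)=-\sv{F}(H_2)$) satisfies $K_{\vert F\vert}=G$ and indexes an element of $\xi_F^{-1}([G,C])$, yet is incomparable to your $K_0=i_F(G)$ (the first quadrant).

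Thus the indexing poset $\{K\in\Fc(\A)\mid K_{\vert F\vert}=G\}$ generally has no maximum; it is precisely the set $\Delta(\A;\,\A_F\cap\gamma_G^{-1}(-1),\,\A_F\cap\gamma_G^{-1}(0),\,\A_F\cap\gamma_G^{-1}(+1))$, whose maximal elements are all the chambers of $\A$ with the prescribed signs on $\A_F$. (It need not have a minimum either: with three concurrent lines in $\mathbb R^2$ and $\A_F$ a single line, a face with sign $0$ on the two lines outside $\A_F$ and nonzero sign on the line in $\A_F$ does not exist.) So the minimum-element shortcut is unavailable, and one really does need a contractibility result for such sign-constrained subposets. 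That is exactly what Lemma~\ref{lem:conv} provides, and the paper's proof consists of the order-reversing identification you describe followed by a direct appeal to that lemma---which is, incidentally, the ``alternative route'' you dismiss at the end.
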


\begin{proof}
  The expression given in Lemma \ref{lem:preim} shows that
  $\xi_F^{-1}([G,C])$ is poset-isomorphic to the order dual of the subposet of $\Fc(\A)$
  consisting of all $K\in \Fc(\A)$ with $K_{\vert F \vert} = G$:
  indeed, given two such $K_1,K_2$ with $K_1\geq K_2$, then 
 $(\iF{F}{C}_{K_1})_{K_2}=\iF{F}{C}_{{K_1}_{K_2}} =\iF{F}{C}_{K_1}$,
 hence $[K_1,\iF{F}{C}_{K_1}] \leq [K_2,\iF{F}{C}_{K_2}]$, and the
 reverse implication is trivial.

  Now, the $K\in \Fc(\A)$ with $K_{\vert F \vert}=G$ are exactly those
  in the subposet $$\Delta(\A, \A_F\cap \gamma_G^{-1}(-1),
  \A_F\cap \gamma_G^{-1}(0),\A_F\cap \gamma_G^{-1}(+1)),$$ which is
  nonempty  thus contractible by Lemma \ref{lem:conv}.
\end{proof}

\begin{lem}\label{lem:fl}
  Let $\A$ be a central, complexified arrangement of hyperplanes. 
 For every $F\in \Fc(\A)$ and every $[G,C]\in \Sc(\A_F)$, the poset
 $\xi_F^{-1}(\Sc(\A_F)_{\leq [G,C]})$ is contractible.
\end{lem}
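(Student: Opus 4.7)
The plan is to identify the preimage, up to order reversal, with a subposet of $\Fc(\A)$ of the form $\Delta(\A;N,\emptyset,P)$, and then conclude contractibility via Lemma \ref{lem:conv}.

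First, applying Lemma \ref{lem:preim} to each element of the downset yields the explicit description
\[
\xi_F^{-1}(\Sc(\A_F)_{\leq [G,C]}) = \{[K,\iF{F}{C_{K_{\vert F\vert}}}_K] \mid K \in \Fc(\A),\ K_{\vert F \vert} \geq G\},
\]
in which the chamber coordinate is determined by $K$. Consequently, the first projection $\pi \colon [K,R] \mapsto K$ is a bijection onto $\mathcal{P} := \{K \in \Fc(\A) : K_{\vert F \vert} \geq G\}$. I would next verify that $\pi$ is in fact an order anti-isomorphism onto $\mathcal{P}$ equipped with the face order. One direction is immediate from the definition of the $\Sc$-order; for the other, given $K \leq K'$ in $\mathcal P$, Lemma \ref{piccolezze}(3),(4) reduces the required chamber-compatibility $R_{K'} = R'$ to checking $\sv{C_{K_{\vert F\vert}}}(H) = \sv{C_{K'_{\vert F\vert}}}(H)$ for every $H \in \A_{\vert K' \vert} \cap \A_F$. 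For such $H$, $\vert K' \vert \subseteq H$ forces $\sv{K'}(H) = 0$, and $K \leq K'$ then forces $\sv{K}(H) = 0$; one deduces that $H$ contains both $\vert K_{\vert F\vert}\vert$ and $\vert K'_{\vert F\vert}\vert$, whence by Definition \ref{rem:identiF} both sides equal $\sv{C}(H)$.

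Next, I would translate the defining condition of $\mathcal P$ into sign-vector language: the relation $K_{\vert F \vert} \geq G$ is equivalent to $\sv{K}(H) = \sv{G}(H)$ for every $H \in \A_F$ with $\sv{G}(H) \neq 0$, so $\mathcal P = \Delta(\A;N,\emptyset,P)$ with $N := \{H \in \A_F : \sv{G}(H) = -1\}$ and $P := \{H \in \A_F : \sv{G}(H) = +1\}$. This set is nonempty (any face of $\A$ contained in $G \subseteq \R^d$ lies in $\mathcal P$), so Lemma \ref{lem:conv} shows that it is contractible; since a poset and its order dual have the same order complex, the preimage is contractible as required.

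The main obstacle is the chamber-compatibility step in the anti-isomorphism argument: one must exclude the possibility that $R_{K'}$ and $R'$ disagree on some hyperplane in $\A_{\vert K' \vert} \cap \A_F$. The key is the sign-vector observation that once $H$ contains $\vert K' \vert$, the relation $K \leq K'$ forces $H$ also to contain the affine spans of $K_{\vert F \vert}$ and $K'_{\vert F \vert}$, making both chamber computations yield $\sv{C}(H)$.
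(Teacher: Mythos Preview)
Your proof is correct and follows essentially the same route as the paper: identify the preimage, via first projection, with the order dual of $\mathcal P=\{K\in\Fc(\A)\mid K_{\vert F\vert}\geq G\}=\Delta(\A;\A_F\cap G^-,\emptyset,\A_F\cap G^+)$ and apply Lemma~\ref{lem:conv}. The only difference is that you spell out the chamber-compatibility step of the anti-isomorphism explicitly, whereas the paper refers back to the argument in Corollary~\ref{cor:pre-cont}; your sign-vector check for $H\in\A_{\vert K'\vert}\cap\A_F$ is correct.
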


\begin{proof} 
Consider an element $[G,C]\in \Sc(\A_F)$
(thus $G\geq F$ and $C\geq G$ in $\Fc(\A)$) and consider the
preimage of 
$$\Sc(\A_F)_{\leq [G,C]}=\{[G',C'] \vert G'\geq G, C_{G'}=C'\}$$ 
with respect to $\xi_F$.

By Lemma \ref{lem:preim}, this preimage is the subposet of $\Sc^F$
consisting of elements
$$
\bigcup_{
\substack{G'\geq G \\ C' = C_{G'}}
}\{[K,C'_K] \mid K_{\vert F\vert} = G'\} = \{[K,R_K] \mid K_{\vert
  F\vert}\geq G, R=i_F(C_{G'})=i_F(C_{K_{\vert F\vert}})\}
$$
which is isomorphic, as in the proof of Corollary \ref{cor:pre-cont} to the subposet of $\Fc(\A)$ given by 
$$
\Pc :=\{K \in \Fc(\A) \mid K_{\vert F\vert}\geq G\} = \Delta(\A;
\A_F\cap G^-;\emptyset; \A_F \cap G^+),
$$
which is nonempty (it contains e.g.\ $\iF{F}{G}$), hence contractible by Lemma \ref{lem:conv}.
\end{proof}

\begin{proof}[Proof of \ref{xi:homeq}]
  The composition $\xi_F \circ j_0^F$ equals obviously the identity. We
  prove that $j_0^F \circ \xi_F$ is homotopic to the identity on
  $\SF{F}$.
To this end consider
$$
\alpha: \Delta(\SF{F}) \to 2^{\gre{\SF{F}}}, \quad \alpha(\sigma):= \gre{\xi^{-1}_F(\Sc(\A_F)_{\leq \max\xi_F(\sigma)})}.
$$
Clearly, the carrier map $\alpha$ carries the identity. Moreover,
an easy check shows that
$$
(\xi_F \circ j_0^F\circ \xi_F)(\sigma) =\xi_F(\sigma)
$$
and thus 
$$
( j_0^F\circ \xi_F)(\sigma) \subseteq \xi^{-1}(\Sc(\A_F)_{\leq \max \xi_F(\sigma)}),
$$
hence $\alpha$ carries both the identity and $j_0^F\circ\xi_F$. We
conclude by
the Carrier Lemma \cite[Proposition II.9.2]{LW} and Lemma \ref{lem:fl}.
\end{proof}

\section{Combinatorial topology of toric arrangements} \label{sec:tsc}

\subsection{The toric Salvetti category}
Let $\A$ be a complexified
toric arrangement. 
One way to obtain an analogue of
Salvetti's complex is to notice
that the canonical embedding of $\Sal(\Aup)$ into $M(\Aup)$ is equivariant with
respect to the action of the rank-$d$ integer lattice on $\C^d$ as the group
of deck transformations of the universal cover of $T$. This leads us to
look for a convenient description of the quotient of the Salvetti
complex, as was first done in \cite{MociSettepanella} in the case
where the resulting complex is again simplicial. In general, one sees that this action restricts to an
action on $\Fc(\Aup)$, and the face category $\Fc(\A)$ is isomorphic to the quotient
$$
\Fc(\A) \cong \Fc(\Aup)/\mathbb Z^d, \textrm{ with covering functor
} Q: \Fc(\Aup) \to \Fc(\A).
$$
This action induces an action on $\Sc(\Aup)$ via

\begin{equation}
g[F,C] := [gF,gC]\label{eq:action}
\end{equation}

for every $F,C\in \Fc(\Aup)$ with $C\geq F$, and hence to a cellular
action on $\Sal(\Aup)$. In \cite{dD2010}, taking advantage of
the generality of acyclic categories, a
description of the quotient category $\Sc(\Aup)/\Z^d$ is given,
together with the proof that indeed $\gre{\Sc(\Aup)/\Z^d}\simeq
\gre{\Sc(\Aup)} / \mathbb Z^d \simeq M(\A)$. 

\bigskip

In fact, much of the notation of the previous section has been
introduced in \cite{dD2010,dD2013} in order to describe the
relationships involved in these covering morphisms. For instance, if
$F\in \Fc(\A)$, the arrangement $\A[F]$ is an `abstract' copy of every
$\Aup_{F^\upharpoonright}$ with $Q(F^\upharpoonright) = F$ for which
we can choose linear forms according to those defining $\Aup$. Then, there
are canonical isomorphisms
\begin{equation}
\Fc(\Aup)_{\geq F^\upharpoonright} \cong \Fc(\Aup_{F^\upharpoonright}) = \Fc(\Ar{F}), \quad\quad \Sal(\Aup_{F^\upharpoonright})\cong\Sal(\Ar{F}).\label{eq:iso}
\end{equation}
given by the identical mapping of sign vectors.
If $m:F\to G$ is a morphism of $\Fc(\A)$, then 
 $\A[G]= \A[F]_{G_0}$ where $G_0$ is the intersection of all hyperplanes of $\A[F]$ that correspond to hypertori
  containing $G$. Moreover, for any choice of $F^\upharpoonright \in
  Q^{-1}(F)$ 
there is exactly one face $G^\upharpoonright$ such that
  $Q(F^\upharpoonright \leq G^\upharpoonright)=m$: we call $F_m$ the
 corresponding   face of $\A[F]$  under the isomorphism of Equation
 \eqref{eq:iso} (notice that $\vert F_m\vert = G_0$). Following \cite{dD2010} we define
$$
i_m:  
\Fc(\A[G]) \hookrightarrow \Fc(\A[F])
$$
to be the map corresponding to the inclusion $\Fc(\Aup)_{\geq
  G^\upharpoonright}\subseteq \Fc(\Aup)_{\geq F^\upharpoonright}$.
\begin{rem}\label{rem:im}
 In terms of sign vectors, the map $i_m$ is determined as follows:
$$
\sv{i_m(K)}(H):=\left\{
  \begin{array}{ll}
    \sv{F_m}(H) & \textrm{ if } H\not\in \Ar{G} \\
    \sv{K}(H) & \textrm{ else.}
  \end{array}
\right.
$$
  In particular, if $X$ is a flat of both $\A[G]$ and of $\A[F]$, then $i_m(K)_X =
  K_X$ for all $K\in \Fc(\A[G])$.
\end{rem}

The order relation $F^\upharpoonright\leq G^\upharpoonright$ also
defines an inclusion
$$
j[F^\upharpoonright\leq G^\upharpoonright]:
\Sal(\Aup_{G^\upharpoonright}) \hookrightarrow \Sal(\Aup_{F^\upharpoonright}),
$$
i.e., the map induced on complexes by the   inclusion $j_G$ of Definition \ref{df:jb}
with respect to the ``ambient'' arrangement
$\A^{\upharpoonright}_{F}$. 

Given a morphism $m:F\to G$ of $\Fc(\A)$, there is a
corresponding inclusion induced by $i_m$ (see also \cite[Definition 5.9]{dD2013}),
$$
 j_m: 
\Sc(\A[G]) \hookrightarrow \Sc(\A[F]), [K,C]\mapsto [i_m(K),i_m(G)].
$$
\bigskip
\begin{figure}[t]\label{fig:notazione}
  \input{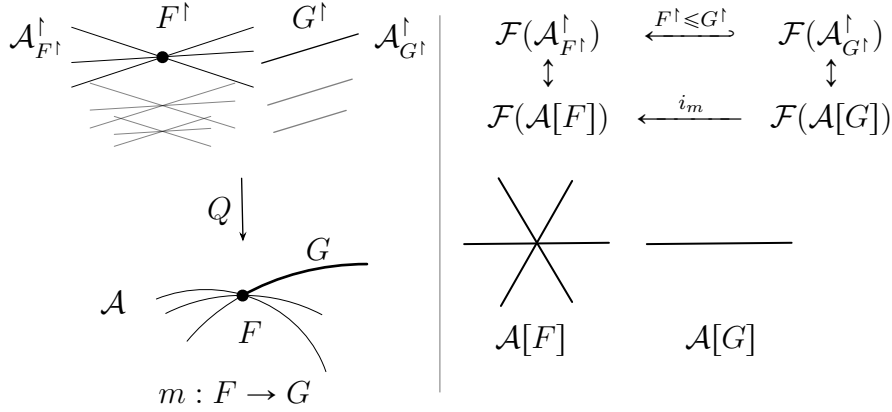}
\caption{A pictorial representation of the notations introduced here.
The ``abstract'' arrangements $\Ar{F}$ serve as a model for the
localizations $\Aup_{F^\upharpoonright}$, and the map $i_m$ is induced
at the abstract level by the inclusion defined by a(ny) lift of the morphism $m$ to $\Fc(\Aup)$.}
\end{figure}
We can now re-state the following definition from \cite{dD2013}.

\begin{df}
  Let $\A$ be a complexified toric arrangement, recall the
  notations of Definition \ref{df:A0} and, for $F\in \Fc(\A)$, write
  $\Ar{F}:=\Ar{\vert F \vert}$, where
  $\vert F\vert$ denotes the smallest dimensional layer that contains $F$. Define a diagram
  \begin{equation*}
    \mathscr D : 
    \begin{array}[t]{rcl}
     \Fc(\A)^{op} & \to & \scwols \\
    F& \mapsto & \Sc(\Ar{F})\\
     m: F\to G & \mapsto & j_m: \Sc(\Ar{G}) \hookrightarrow \Sc(\Ar{F})
   \end{array}
  \end{equation*}
\end{df}

In \cite{dD2013} it is proved that $\gre{\colim \mathscr D}$ is
homotopy equivalent to $M(\A)$. For our purposes we need to prove
another connection between $\mathscr D$ and the homotopy type of
$M(\A)$.

\begin{thm}
  $\hocolim \gre{\mathscr D} \simeq M(\A) $.  
\end{thm}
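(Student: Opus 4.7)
The plan is to combine Thomason's theorem with the previously established identification $\gre{\colim \mathscr D} \simeq M(\A)$ from \cite{dD2013}, thereby reducing the statement to a comparison between the homotopy colimit and the strict colimit of $\mathscr D$.

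First, by Lemma \ref{lem:Thomason} there is a natural homotopy equivalence
$$
\hocolim \gre{\mathscr D} \simeq \gre{\GC \mathscr D},
$$
where the Grothendieck construction $\GC \mathscr D$ has as objects the pairs $(F, [K,C])$ with $F \in \Fc(\A)$ and $[K,C] \in \Sc(\Ar{F})$, and a morphism $(F_1,[K_1,C_1]) \to (F_2,[K_2,C_2])$ is given by an arrow $m : F_2 \to F_1$ of $\Fc(\A)$ together with a morphism $j_m([K_1,C_1]) \to [K_2,C_2]$ in $\Sc(\Ar{F_2})$. There is an evident projection functor
$$
p: \GC \mathscr D \to \colim \mathscr D
$$
sending each pair to its equivalence class; it remains to prove that $\gre{p}$ is a homotopy equivalence.

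I would carry this out via Quillen's Theorem~A: it suffices to show that for every object $\sigma$ of $\colim \mathscr D$ the comma category $p\!\downarrow\!\sigma$ is contractible. Since each diagram map $j_m$ is an injective functor onto a subscwol (see Definition \ref{df:jb} and Remark \ref{rem:im}), the objects of $p\!\downarrow\!\sigma$ can be encoded combinatorially: fixing a distinguished representative $[K_0,C_0]\in \Sc(\Ar{F_0})$ of $\sigma$, such an object amounts to a face $F\in \Fc(\A)$ together with a lift of $[K_0,C_0]$ to an element of $\Sc(\Ar{F})$ compatible with a zigzag of $j_m$'s. I expect to identify the resulting poset of admissible pairs with a sign-vector region of the form $\Delta(\A_0;N,Z,P)$ from the definition preceding Lemma \ref{lem:conv}, whose contractibility is guaranteed by that lemma.

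The hard part will be performing this identification carefully enough to track morphisms between comma-category objects, not only objects; the strategy follows the pattern used in Corollary \ref{cor:pre-cont} and Lemma \ref{lem:fl}, translating the lifting condition into a constraint on the sign vector of $F$ relative to the hyperplanes of $\A_0$ via the explicit description of $i_m$ in Remark \ref{rem:im}. Once contractibility of every comma category is established, Quillen's Theorem~A yields $\gre{\GC \mathscr D}\simeq \gre{\colim \mathscr D}$, and composition with the equivalence $\gre{\colim \mathscr D}\simeq M(\A)$ of \cite{dD2013} completes the proof.
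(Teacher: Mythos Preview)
Your overall strategy---reduce to the comparison $\GC\mathscr D \simeq \colim\mathscr D$ and then invoke the result of \cite{dD2013} that $\gre{\colim\mathscr D}\simeq M(\A)$---is legitimate and genuinely different from the paper's route. The paper does \emph{not} cite \cite{dD2013} here; instead it gives a self-contained argument: it writes $\Sal(\Aup)$ as a colimit over $\Fc(\Aup)^{op}$, pushes this forward to $\Fc(\A)^{op}$, organizes the $\mathbb Z^d$-action into a two-variable diagram $\hat{\mathscr G}$ over $\mathbb Z^d \times \Fc(\A)^{op}$, and then swaps the order of the two colimits (a Fubini move) to identify $\Sal(\Aup)/\mathbb Z^d$ with $\colim\gre{\mathscr D}$. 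Only at the very last step does the paper pass from $\colim$ to $\hocolim$, and there it simply invokes the Projection Lemma \cite[Proposition~3.1]{WZZ}, which applies because each $\gre{j_m}$ is an inclusion of subcomplexes, hence a closed cofibration. What your approach buys is brevity, provided you are willing to black-box \cite{dD2013}; what the paper's approach buys is a proof that is independent of that earlier paper and makes the role of the deck group transparent.

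The gap in your execution is the Quillen~A step. The comma category $p\!\downarrow\!\sigma$ is indexed by faces $F\in\Fc(\A)$ of the \emph{toric} arrangement together with lifts of $\sigma$ along chains of $j_m$'s. Such an $F$ does not carry a sign vector in $\A_0$: only the hyperplanes of the subarrangement $\Ar{F}\subseteq\A_0$ see $F$, and two distinct toric faces can have identical local arrangements $\Ar{F}$. So there is no map from the comma category into $\Fc(\A_0)$, and no identification with a set of the form $\Delta(\A_0;N,Z,P)$; Lemma~\ref{lem:conv} is not available. The analogies with Corollary~\ref{cor:pre-cont} and Lemma~\ref{lem:fl} break down for the same reason: those results concern preimages under $\xi_F$ inside a fixed \emph{linear} Salvetti poset, whereas here the index variable ranges over the (non-linear) face category $\Fc(\A)$. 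If you want to stay on your route, the clean fix is to drop the Quillen~A argument entirely and use the Projection Lemma directly (the diagram maps $\gre{j_m}$ are cofibrations), which gives $\hocolim\gre{\mathscr D}\simeq\colim\gre{\mathscr D}$ in one line; then the citation of \cite{dD2013} finishes the proof.
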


\begin{proof}
\newcommand{\ZZZ}{\mathbb Z^d}
  We will prove that $\hocolim \gre{\mathscr D}$ is homotopy
  equivalent to the quotient of
  $\Sal(\A^{\upharpoonright})$ by the induced $\ZZZ$-action.

Write $\ZZZ$ for the one-object category (we write $\ast$ for
  this object) representing the group. We then write the quotient $\Sal(\A^{\upharpoonright})$ as
  the colimit of the diagram
  \begin{align*}
    \mathscr S : \ZZZ &\to \tops \\
    \ast &\mapsto \Sal(\A^\upharpoonright) \\
    g &\mapsto g: \Sal(\A^\upharpoonright)\to \Sal(\A^\upharpoonright)
  \end{align*}
  here the action of some $g\in \ZZZ$ on $\Sal(\A^{\upharpoonright})$
  is described in Equation \eqref{eq:action}.

Now  notice that by construction 
  $\Sal(\A^{\upharpoonright})$ is covered by the subcomplexes
  $\Sal(\A^{\upharpoonright}_{F})$ and is thus the colimit of 
  \begin{align*}
    \mathscr E : \Fc(\A^{\upharpoonright})^{op} \to \tops,\quad
     F \mapsto \Sal(\A^{\upharpoonright}_{F}),\quad
     F\leq G \mapsto j[F\leq G]: \Sal(\A^{\upharpoonright}_{G}) \to \Sal(\A^{\upharpoonright}_{F}),
  \end{align*}
where $j[F\leq G]$ denotes the map induced on complexes by the   inclusion $j_G$ of Definition \ref{df:jb}
with respect to the ``ambient'' arrangement
$\A^{\upharpoonright}_{F}$. Consider now the push-forward $\hat{\mathscr E}$ of $\mathscr
E$ along  the functor $Q: \Fc(\A^\upharpoonright) \to \Fc(\A),$ 
(and thus with $\colim \mathscr E = \colim \hat{\mathscr E}$). We have
the following explicit form.

    \begin{align*}
    \hat{\mathscr E} : \Fc(\A)^{op}& \to \tops \\
    F &\mapsto \bigsqcup_{F^\upharpoonright \in Q^{-1}(F)}\Sal(\A^{\upharpoonright}_{F^{\upharpoonright}}) \\
     m: F\to G &\mapsto \sqcup_{(F^\upharpoonright\leq
       G^\upharpoonright) \in Q^{-1}(m)} j[F^\upharpoonright\leq
     G^\upharpoonright] : (S, G^\upharpoonright) \mapsto (j[F^\upharpoonright\leq
     G^\upharpoonright](S), F^\upharpoonright) 
  \end{align*}

Notice that, for $g\in \ZZZ$ and any morphism $F^\upharpoonright \leq
G^\upharpoonright$ of the (poset-) category $\Fc(\Aup)$, we have $g \circ j[F^\upharpoonright \leq
G^\upharpoonright] = j[g(F^\upharpoonright \leq G^\upharpoonright)]$,
where we write $g$ for the automorphism of $\Fc(\Aup)$ defined by $g$.
With this, the following diagram describing the action of $\ZZZ$ on
$\hat{\mathscr E}$ is well-defined.
  \begin{align*}
    \hat{\mathscr G} : \ZZZ \times \Fc(\A)^{op}& \to \tops \\
    (\ast, F) &\mapsto \hat{\mathscr E} (F) \\
     (g, m: F\to G) &\mapsto  
     g\circ \hat{\mathscr E} (m)
  \end{align*}

\begin{list}{}{}\item[{\bf Claim 1:}] $\colim_{\Fc(\A)} \hat{\mathscr G}
  = \mathscr S$.
\item[{\em Proof:}] We check the equality pointwise. On object(s) we
  have
$$
(\colim_{\Fc(\A)} \hat{\mathscr G}) (\ast) = \colim
\hat{\mathscr E} = \colim {\mathscr E} = \Sal(\Aup).
$$
The morphism $(\colim_{\Fc(\A)} \hat{\mathscr G}) (g):
(\colim_{\Fc(\A)} \hat{\mathscr G}) (\ast)\to (\colim_{\Fc(\A)} \hat{\mathscr G}) (\ast)$ is induced by
the natural transformation $\hat{\mathscr G}(g,\id_{-}): \hat{\mathscr G} (\ast,-)
\Rightarrow \hat{\mathscr G}(\ast,-)$ which acts over objects as

\begin{equation}
\hat{\mathscr G} (g,\id_{F^\upharpoonright}):
\begin{array}[t]{rll}
\hat{\mathscr G}(\ast,F) &\to&
\hat{\mathscr G}(\ast,F), \\ (S, F^\upharpoonright) &\mapsto & (g\circ
\id_F)(S, F^\upharpoonright)=(g(S),g(F^\upharpoonright))
\end{array}\label{eq:explicitG}
\end{equation}

Now one checks explicitly that this induces the map $S \mapsto
g(S)$ on $\colim \hat{\mathscr E} = \Sal(\Aup)$, as required. \hfill$\triangle$
\end{list}
\begin{list}{}{}\item[{\bf Claim 2:}] $\colim_{\ZZZ} \hat{\mathscr G}
  \cong \gre{\mathscr D}$.
\item[{\em Proof:}]
 Again we can verify the isomorphism pointwise using the fact that
 preimages under the functor $Q$ are exactly orbits of the action of
 $\ZZZ$ on $\Fc(\Aup)$. For every $F\in
 \operatorname{Ob}(\Fc(\A))$, with Equation \eqref{eq:explicitG} we have
$$
\colim_{\ZZZ} \hat{\mathscr G}(-,F) = \hat{\mathscr E}(F)/\ZZZ \cong
\Sal(\Ar{F}) \cong \gre{ \mathscr D (F)} ,
$$
where the last two congruence symbols denote isomorphism of complexes
and homeomorphism (by barycentric subdivision),
and on morphisms $m\in\operatorname{Mor}(\Fc(\A))$:
$$
\colim_{\ZZZ} \hat{\mathscr G}(-,m) = \hat{\mathscr E}(m)/\ZZZ \cong \gre{j_m}.
$$
 \hfill$\triangle$
\end{list}
With these two claims the Theorem will follow, because then we have
\begin{align*}
  \Sal(\A^{\upharpoonright})/G &= \colim_{\ZZZ} \mathscr S = \colim
  \hat{\mathscr G} \\
  &=\colim_{\Fc(\A)} \colim_{\ZZZ} \hat{\mathscr G} = \colim
  \gre{\mathscr D} \simeq \hocolim \gre{\mathscr D},
\end{align*}
where the last homotopy equivalence is given by the Projection Lemma
\cite[Proposition 3.1]{WZZ}.
\end{proof}

\begin{cor}
  $M(\A) \simeq \gre{\GC \mathscr D}$.
\end{cor}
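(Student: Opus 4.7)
The corollary is essentially immediate from the theorem just established, together with Thomason's theorem (Lemma \ref{lem:Thomason}). The plan is as follows.

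First I would invoke the theorem proved just above, which establishes the homotopy equivalence
$$M(\A) \simeq \hocolim \gre{\mathscr D}.$$
Next, since $\mathscr D$ is by construction a diagram in $\scwols$ indexed by the acyclic category $\Fc(\A)^{op}$, Lemma \ref{lem:Thomason} applies directly and yields
$$\hocolim \gre{\mathscr D} \simeq \gre{\GC \mathscr D}.$$
Composing these two equivalences produces the desired homotopy equivalence $M(\A) \simeq \gre{\GC \mathscr D}$.

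There is essentially no obstacle here: the work has already been done in the preceding theorem (the delicate part being the identification of the quotient $\Sal(\A^{\upharpoonright})/\mathbb Z^d$ with $\colim \gre{\mathscr D}$, and the use of the Projection Lemma to promote the colimit to a homotopy colimit), and Thomason's theorem is cited as a black box. The only point worth being careful about is that Thomason's theorem is stated for a diagram of scwols, and one should confirm that $\mathscr D$ indeed has this form — which is built into the definition of $\mathscr D$ given just before the theorem. Once this is checked, the two equivalences compose and the corollary follows.
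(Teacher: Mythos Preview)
Your proof is correct and matches the paper's own argument exactly: the paper simply says ``Immediate with the Theorem above and Lemma \ref{lem:Thomason}.'' Your additional remark verifying that $\mathscr D$ is genuinely a diagram in $\scwols$ (so that Thomason's theorem applies) is a reasonable sanity check, but otherwise there is nothing to add.
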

\begin{proof}
  Immediate with the Theorem above and Lemma \ref{lem:Thomason}.
\end{proof}
This prompt us to deviate from
the conventions of \cite{dD2013} and to define the Salvetti complex of a
complexified toric arrangement as follows.

\begin{df} For every complexified toric arrangement $\A$ let
  $$\Sc(\A):= \GC \mathscr D; \quad\quad \Sal (\A):=\gre{\Sc(\A)}. $$
\end{df}

\begin{rem} \label{rem:proj}
  We have immediately $\Sal(\A) \simeq M(\A)$.
Moreover, with  Remark \ref{rem:projGC} we have a cellular map
$$ \pi: \Sal(\A) \to \gre{\Fc(\A)} \simeq T_c$$
induced by the canonical projection from $\hocolim \gre{\mathscr D}$.
\end{rem} 

\subsection{Inclusions}\label{ss:inclusions}

The goal of this section will be to associate to every layer $L$ a
subcomplex of $\Sal(\A)$ homotopy equivalent to the product of $L$
times the complement of the (hyperplane) arrangement $\Ar{L}$. We will
do this in a way that is compatible with the projection to the compact
torus and so that the maps induced in cohomology by the inclusions of
these subcomplexes satisfy a Brieskorn-type compatibility condition
which will be the stepping stone towards a presentation of the
cohomology algebra.

\begin{df}
Given a layer $L \in \Cc$ we write $L_c$ for the intersection $L
\cap T_c$ of $L$ with the compact torus. 
\end{df}
\begin{df}
Let $\A^L$ denote the complexified toric arrangement
defined in the torus $L$ by all hypertori not containing $L$, that is 
the arrangement of all hypertori appearing as a connected component
of an intersection $L\cap K$ for $K\in \A$, $L\not\subseteq K$.	
\end{df}
Notice that the
cellularization $\vert \Fc(\A)\vert$ of $T_c$ restricts to a
cellularization $\gre{\Fc(\A^L)}$ of $L_c$ (i.e., there is a
cellular 
homeomorphism $h: T_c \to \gre{\Fc(\A)}$ with 
$\gre{\Fc(\A^L)}=\gre{\Fc(\A)}\cap
h(L)$).

\begin{thm}\label{thm:inclusion} 
For every layer $L \in \Cc$ and every chamber $B_0\in \Tc(\A_0)$
adjacent to $L_0:=\cap\Ar{L} \in \Ll(\A_0)$, let $F_0:=\overline{B_0}
\cap L_0$. 
Then, there is a subcomplex $\Sl{F_0}$ of the
toric Salvetti complex $\Sal(\A)$ satisfying
\begin{itemize}
\item[(1)] under the canonical projection $\Sal(\A) \to T_c$,
  $\Sl{F_0}$ maps to the layer $L_c$; 
\item[(2)] there is a homotopy equivalence
$\Theta_{F_0}: \Sl{F_0} \to \gre{F(\A^L)} \times \Sal(\Ar{L}) $;
\item[(3)] the first component of $\Theta_{F_0}$ is the projection
  from (1).
\end{itemize}
\end{thm}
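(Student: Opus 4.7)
The plan is to realize $\Sl{F_0}$ as the Grothendieck construction of a subdiagram of $\mathscr{D}$ obtained by restricting to $\Fc(\A^L)^{op}\subseteq\Fc(\A)^{op}$ and replacing each local fiber $\Sc(\Ar{F})$ by the subcomplex given by Definition \ref{def:j}, then to compare it with the constant diagram of value $\Sc(\Ar{L})$ via the retractions of Proposition \ref{xi:homeq}.

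\smallskip

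\textbf{Setting up the subcomplex.} For every $F\in\Fc(\A^L)$ one has $\vert F\vert\subseteq L$, hence $\Ar{L}\subseteq\Ar{F}$, and $L_0=\cap\Ar{L}$ is a flat of $\Ar{F}$ satisfying $(\Ar{F})_{L_0}=\Ar{L}$ (as any hyperplane of $\A_0$ containing $L_0$ corresponds to a hypertorus through $L$, using connectedness of $L$). Using the chamber $B_0$, whose sign vector is defined on all of $\A_0\supseteq\Ar{F}$, I define $\phi(F)\in\Fc(\Ar{F})$ to be the unique face whose sign vector vanishes on $\Ar{L}$ and agrees with $\sv{B_0}\vert_{\Ar{F}\setminus\Ar{L}}$; such a face exists because $F_0$ already realizes this sign pattern on the larger arrangement $\A_0$, and $\vert\phi(F)\vert=L_0$ by construction. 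I then set
\begin{equation*}
\Sl{F_0}\;:=\;\GC\mathscr{D}_{F_0},\qquad \mathscr{D}_{F_0}:\Fc(\A^L)^{op}\to\scwols,\quad F\mapsto \SF{\phi(F)}(\Ar{F}),
\end{equation*}
with morphism part inherited from $\mathscr{D}$ via restriction of $j_m$. The well-definedness point to check is that for every morphism $m:F\to G$ of $\Fc(\A^L)$ the map $j_m$ carries $\SF{\phi(G)}(\Ar{G})$ into $\SF{\phi(F)}(\Ar{F})$; by Remark \ref{rem:im} this is a sign-vector computation that uses exactly the fact that $\phi$ is uniformly induced by the single chamber $B_0$, so the signs filled in by $i_m$ on hyperplanes of $\Ar{F}\setminus\Ar{G}$ are precisely those prescribed by $\phi(F)$. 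Property (1) is then immediate from Remark \ref{rem:proj}: $\Sl{F_0}$ projects onto $\Fc(\A^L)\subseteq\Fc(\A)$, and $\gre{\Fc(\A^L)}$ is the cellularization of $L_c$.

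\smallskip

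\textbf{Producing the homotopy equivalence.} For (2) and (3) I invoke the retractions $\xi_{\phi(F)}:\SF{\phi(F)}(\Ar{F})\to\Sc((\Ar{F})_{L_0})=\Sc(\Ar{L})$ of Definition \ref{def:j}, which are homotopy equivalences by Proposition \ref{xi:homeq}. A second sign-vector check via Remark \ref{rem:im} shows that $\xi_{\phi(F)}\circ j_m=\xi_{\phi(G)}$ for every $m:F\to G$ in $\Fc(\A^L)$, so $(\xi_{\phi(F)})_F$ assembles into a natural transformation from $\mathscr{D}_{F_0}$ to the constant diagram $\mathcal{C}_{\Sc(\Ar{L})}$ on $\Fc(\A^L)^{op}$. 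Since pointwise homotopy equivalences of diagrams induce equivalences of the associated homotopy colimits (Section \ref{sec:preparation}) and the Grothendieck construction of a constant diagram is a product of categories, Lemma \ref{lem:Thomason} yields
\begin{equation*}
\Sl{F_0}\;=\;\gre{\GC\mathscr{D}_{F_0}}\;\simeq\;\gre{\GC\mathcal{C}_{\Sc(\Ar{L})}}\;\cong\;\gre{\Fc(\A^L)}\times\Sal(\Ar{L}).
\end{equation*}
The composite $\Theta_{F_0}$ has as first component, by construction, the canonical projection of (1).

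\smallskip

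\textbf{Main obstacle.} The principal difficulty is the coherent tracking of sign vectors across all the maps $j_m$: one must simultaneously verify that $j_m$ restricts to $\SF{\phi(\cdot)}$ \emph{and} that the retractions $\xi_{\phi(\cdot)}$ commute with $j_m$ so as to land in a constant diagram. Both rely on Remark \ref{rem:im} and on the uniform definition of $\phi$ via the single chamber $B_0$, but they require careful bookkeeping at the hyperplanes of $\Ar{F}\setminus\Ar{G}$, where the signs of the auxiliary face $F_m$ enter. Once these two compatibilities are in place, the topological half of the argument reduces to standard homotopy-colimit technology.
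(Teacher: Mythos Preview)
Your architecture is exactly the paper's: define $\mathscr D_{F_0}$ over $\Fc(\A^L)^{op}$ with fibers $\SF{F_0}(\Ar{F})$ (your $\phi(F)$ is nothing but $F_0$ viewed in $\Fc(\Ar{F})$), build a natural transformation to the constant diagram $\Sc(\Ar{L})$ via the retractions $\xi[F]_{F_0}$ of Definition~\ref{def:j}, and conclude via Thomason's theorem and the homotopy lemma for diagrams.

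The one real issue is your justification that $j_m$ restricts to the subdiagram. You assert that ``the signs filled in by $i_m$ on hyperplanes of $\Ar{F}\setminus\Ar{G}$ are precisely those prescribed by $\phi(F)$'', which amounts to $\sv{F_m}(H)=\sv{B_0}(H)$ for $H\in\Ar{F}\setminus\Ar{G}$. This is false in general: the face $F_m$ is determined by the particular morphism $m$, and since $\Fc(\A^L)$ is an acyclic category rather than a poset there may be several morphisms $F\to G$, each yielding a different $F_m$ with different signs on $\Ar{F}\setminus\Ar{G}$. Even when the claim happens to hold, it does not by itself give membership in $\SF{\phi(F)}(\Ar{F})$, which is a union of closed top cells $\Sc_{C''}$ and is not cut out by a sign condition on $\Ar{F}\setminus\Ar{L}$. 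The paper's argument supplies the missing idea: it parametrizes the relevant top cells by chambers $B\in B(F_0)\subseteq\Tc(\A_0)$ through the maps $\mu_F(B)\supseteq B$, and verifies (Lemma~\ref{lem:duecose}(a)) that $\mu_F(B)$ and $i_m(\mu_G(B))$ agree on $\Ar{G}$, whence $j_m(\Sc_{\mu_G(B)})\subseteq\Sc_{\mu_F(B)}$ by \cite[Remark 5.13, Lemma 5.12]{dD2013}. It is this cell-by-cell matching through the fixed ambient arrangement $\A_0$ that makes your ``uniformity via $B_0$'' heuristic rigorous; once you have it, the rest of your outline goes through verbatim and coincides with the paper's.
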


We keep the notations of the theorem's claim ($F_0$, $B_0$, $L_0$) split the proof in multiple steps for easier understanding and
later reference.

\begin{df} Recall Definition
    \ref{df:A0}: $F_0$ is a face of $\A_0$ and we can define 
  $$B(F_0):=\{C\in \Tc(\A_0) \mid C\geq F_0 \textrm{ in }\Fc(\A_0)\},$$
  the set of all chambers of $\A_0$ that are adjacent to $F_0$.
 Moreover, for every $F\in\Fc(\A^L)$ define
$$\mu_F:\Tc(\A_0) \to \Tc(\Ar{F}); \mu_F(C) \supseteq C,  $$
\end{df}

\begin{lem} \ \label{lem:duecose}
  \begin{itemize}
  \item[(a)]
    For every morphism $m:F\to G$ in $\mathcal F(\A)$
    and every $C\in \mathcal T(\A_0)$, 
    \begin{equation*}
      S(\mu_F(C),i_m(\mu_G(C))) \cap \Ar{G} =\emptyset.
    \end{equation*}
    \item[(b)]
    For all $F\in \Fc(\A^L)$ we have $F_0\in\Fc(\Ar{F})$, and
    \begin{equation*}
      \mu_F(B(F_0)) = \{C\in \Tc(\Ar{F}) \mid C \geq F_0 \textrm{ in }
      \Fc(\Ar{F})\}.
    \end{equation*}
  \end{itemize}
\end{lem}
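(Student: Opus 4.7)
The plan is to argue both parts by direct sign-vector manipulation, using the explicit description of $i_m$ from Remark \ref{rem:im} and the point (implicit in the construction of the diagram $\mathscr{D}$) that a morphism $m\colon F\to G$ in $\Fc(\A)$ corresponds to $|F|\subseteq |G|$ and hence to an inclusion $\Ar{G}\subseteq \Ar{F}$.

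For (a), pick any $H\in\Ar{G}$. Since $H\in \Ar{F}$ by the inclusion above and $\mu_F(C)$ is by definition the chamber of $\Ar{F}$ containing $C$, we have $\sv{\mu_F(C)}(H)=\sv{C}(H)$; the analogous identity for $\Ar{G}$ gives $\sv{\mu_G(C)}(H)=\sv{C}(H)$, and Remark \ref{rem:im} then forces $\sv{i_m(\mu_G(C))}(H)=\sv{\mu_G(C)}(H)=\sv{C}(H)$. Thus the two chambers $\mu_F(C)$ and $i_m(\mu_G(C))$ agree in sign on every $H\in \Ar{G}$, which is exactly the assertion that $S(\mu_F(C),i_m(\mu_G(C)))\cap\Ar{G}=\emptyset$.

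For (b), I would first pin down the sign vector of $F_0$ in $\A_0$. The adjacency of $B_0$ to $L_0$ forces $F_0=\overline{B_0}\cap L_0$ to be a face of $\A_0$ of dimension $\dim L_0$ lying inside $L_0$, so $\sv{F_0}(H)=0$ for every $H\in\Ar{L}$, and since each $H\in\A_0\setminus\Ar{L}$ cuts $L_0$ in a proper subspace, the top-dimensional $F_0$ cannot lie in $H$, hence $\sv{F_0}(H)=\sv{B_0}(H)\neq 0$ for every such $H$. In particular the restriction of $\sv{F_0}$ to $\Ar{F}$ realizes a well-defined face of $\Ar{F}$ (zero on $\Ar{L}$, equal to $\sv{B_0}$ on $\Ar{F}\setminus\Ar{L}$), which by abuse we also denote $F_0$.

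The equality then reduces to a sign-vector check. The inclusion $\mu_F(B(F_0))\subseteq\{C'\in\Tc(\Ar{F}):C'\geq F_0\}$ is immediate because restriction of sign vectors from $\A_0$ to $\Ar{F}$ preserves the face order. For the reverse inclusion, given $C'\in\Tc(\Ar{F})$ with $C'\geq F_0$, I would use a proximity argument: pick a generic point $x\in C'$ sufficiently close to $F_0$. Since $\sv{F_0}$ is nonzero and equal to $\sv{B_0}$ on every hyperplane of $\A_0\setminus\Ar{L}$, such signs are preserved under small perturbations, so the chamber $C\in\Tc(\A_0)$ containing $x$ satisfies $\sv{C}|_{\Ar{F}}=\sv{C'}$ and $\sv{C}|_{\A_0\setminus\Ar{L}}=\sv{B_0}$. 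The first identity says $\mu_F(C)=C'$ and the second forces $C\geq F_0$, placing $C$ in $B(F_0)$. The main obstacle is nothing conceptually deep, just the bookkeeping of which hyperplanes lie in which of the nested arrangements $\Ar{L}\subseteq \Ar{F}\subseteq \A_0$ and the verification that the candidate sign vector for $C$ is actually realised, which is where the continuity-of-nonzero-signs argument does the work.
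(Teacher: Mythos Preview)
Your argument for part (a) is correct and matches the paper's proof exactly: both reduce to checking sign vectors on $H\in\Ar{G}$ via the inclusion $\Ar{G}\subseteq\Ar{F}$ and Remark \ref{rem:im}.

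For part (b) your overall strategy (sign-vector description of $F_0$, then a proximity argument for the nontrivial inclusion) is sound, and in fact more thorough than the paper's, which only writes out the inclusion $\mu_F(B(F_0))\subseteq\{C'\geq F_0\}$ explicitly. However, one intermediate claim is not quite right: it is not true in general that $\sv{F_0}(H)\neq 0$ for every $H\in\A_0\setminus\Ar{L}$. If some $Y_j$ is parallel to $L$ but disjoint from it (e.g.\ in $(\mathbb C^*)^3$ take $Y_1=\{z_1=1\}$, $Y_2=\{z_2=1\}$, $Y_3=\{z_1z_2=-1\}$ and $L=Y_1\cap Y_2$), then $H=Y_j^{\upharpoonright}$ contains $L_0$, so $\sv{F_0}(H)=0$ even though $H\notin\Ar{L}$. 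Your justification ``each $H\in\A_0\setminus\Ar{L}$ cuts $L_0$ in a proper subspace'' fails precisely here.

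This does not damage the proof. The claim \emph{is} correct for $H\in\Ar{F}\setminus\Ar{L}$: such $H=Y_j^{\upharpoonright}$ comes from a $Y_j$ with $\emptyset\neq |F|\subseteq Y_j\cap L$ and $L\not\subseteq Y_j$, which forces $L_0\not\subseteq Y_j^{\upharpoonright}$ (otherwise $L$ would be parallel to $Y_j$ and, meeting it, contained in it). So your description of $(F_0)_{\Ar{F}}$ stands. And for the proximity step you only need the nonzero coordinates of $\sv{F_0}$ to be preserved near $p\in F_0$; the conclusion $C\geq F_0$ imposes no constraint on coordinates where $\sv{F_0}(H)=0$. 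Thus replacing ``$\A_0\setminus\Ar{L}$'' with ``the support of $\sv{F_0}$'' in your perturbation argument fixes everything.
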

\begin{proof}
  For part (a) notice that $C\subseteq \mu_F(C)\subseteq \mu_G(C)$,
  thus for $H\in \Ar{G}$ clearly $\sv{\mu_G(C)}(H) =
  \sv{\mu_F(C)}(H)$, and moreover with Remark \ref{rem:im} we have
  $\sv{i_m(\mu_G(C))}(H) = \sv{\mu_G(C)}(H)$ whenever $H\in \Ar{G}$.
  For part (b) notice first that $\mu_F$ maps chambers to chambers,
  thus it is enough to check that for $C\in B(F_0)$ we have
  $\mu_F(C)\geq F_0$. But the definition of $\mu_F$ is that
  $\sv{\mu_F(C)}(H) = \sv{C}(H)$ for all $H\in \Ar{F}$, thus $F_0\leq
  C$ in $\A_0$ implies $F_0\leq \mu_F(C)$ in $\Ar{F}$.
\end{proof}

\begin{df}
    We now define the following
    diagram: 

  \begin{align*}
    \mathscr D_{F_0}: \Fc(\A^L)^{op} \to &\scwols \\
    F\mapsto & \bigcup_{ B\in B(F_0) } \MS(\Ar{F})_{\mu_F(B)} 
  \end{align*}
and the maps are defined as restrictions of the corresponding maps of
the diagram $\mathscr D$.
\end{df}

\begin{lem}
  The diagram is well-defined, and
$$\mathscr D_{F_0} (F) = \Sc^{F_0}(\Ar{F}).$$
\end{lem}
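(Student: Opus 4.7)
The plan is to establish the two assertions separately: first the identification $\mathscr D_{F_0}(F) = \Sc^{F_0}(\Ar{F})$ on objects, then the well-definedness on morphisms, which amounts to checking that each $j_m$ restricts to a map $\Sc^{F_0}(\Ar{G}) \to \Sc^{F_0}(\Ar{F})$.

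For the object-level identity I would just combine Lemma \ref{lem:duecose}(b) with Definition \ref{def:j}. The former identifies $\mu_F(B(F_0))$ with the set of chambers of $\Ar{F}$ adjacent to $F_0$, hence
$$
\mathscr D_{F_0}(F) = \bigcup_{B \in B(F_0)} \Sc(\Ar{F})_{\mu_F(B)} = \bigcup_{\substack{C \in \Tc(\Ar{F}) \\ C \geq F_0}} \Sc(\Ar{F})_C = \Sc^{F_0}(\Ar{F}).
$$

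For well-definedness I would fix a morphism $m:F\to G$ of $\Fc(\A)$ (coming from $\Fc(\A^L)^{op}$), a chamber $B\in B(F_0)$, and an element $[K,C]\in \Sc(\Ar{G})_{\mu_G(B)}$ (so by definition $C=(\mu_G(B))_K$). The aim is to show that $j_m([K,C])=[i_m(K),i_m(C)]$ lies in $\Sc(\Ar{F})_{\mu_F(B)}$ for the \emph{same} $B$, i.e.\ that $(\mu_F(B))_{i_m(K)} = i_m(C)$; since $i_m(K)\geq P_F$ is automatic, this places the image inside $\mathscr D_{F_0}(F)$.

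The equality $(\mu_F(B))_{i_m(K)} = i_m(C)$ is then a sign-vector check split according to whether $H\in \Ar{G}$ or $H\in \Ar{F}\setminus \Ar{G}$. For $H\in\Ar{G}$ one uses the identity $\sv{\mu_F(B)}(H)=\sv{\mu_G(B)}(H)=\sv{B}(H)$ (the content of Lemma \ref{lem:duecose}(a), and a consequence of both chambers containing $B$) together with $C=(\mu_G(B))_K$; the two subcases $\sv{K}(H)\neq 0$ and $\sv{K}(H)=0$ are immediate. For $H\in \Ar{F}\setminus \Ar{G}$ one invokes the defining property of $F_m$: under the isomorphism $\Fc(\Aup)_{\geq F^\upharpoonright}\cong \Fc(\Ar{F})$, the face $F_m$ corresponds to $G^\upharpoonright$, which is contained in precisely the hyperplanes of $\Aup_{F^\upharpoonright}$ lifting hypertori of $\A$ containing $G$. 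Hence $\sv{F_m}(H)=0$ exactly when $H\in \Ar{G}$, so on $\Ar{F}\setminus \Ar{G}$ we have $\sv{i_m(K)}(H)=\sv{F_m}(H)\neq 0$, which forces the sign of $(\mu_F(B))_{i_m(K)}$ at $H$ to agree with that of $i_m(C)$.

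The only non-bookkeeping point — and hence the main (small) obstacle — is the identification of the vanishing locus of $\sv{F_m}$ with $\Ar{G}$; everything else is a direct consequence of Definition \ref{rem:identiF}, Remark \ref{rem:im}, and Lemma \ref{lem:duecose}.
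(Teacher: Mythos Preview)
Your proof is correct and follows the same overall strategy as the paper. The object-level identity is handled identically via Lemma~\ref{lem:duecose}(b); for well-definedness, the paper simply invokes Lemma~\ref{lem:duecose}(a) together with \cite[Remark~5.13 and Lemma~5.12]{dD2013} to conclude that $j_m$ restricts to $\Sc_{\mu_G(B)}\to\Sc_{\mu_F(B)}$, whereas you unpack this into the explicit sign-vector verification $(\mu_F(B))_{i_m(K)}=i_m(C)$. Your identification of the vanishing locus of $\sv{F_m}$ with $\Ar{G}$ is exactly the content of $\vert F_m\vert = G_0$ stated just before Remark~\ref{rem:im}, so that step is on solid ground; in effect you have reproduced in-line the argument the paper outsources to \cite{dD2013}.
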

\begin{proof}
  The diagram is is well-defined because, by Lemma \ref{lem:duecose}.(a) and
\cite[Remark 5.13]{dD2013}, for every $m:F\to G$ in $\mathcal F(\A^L)$
and every $C\in \mathcal T(\A_0)$ the inclusion $j_m:\Sc(\Ar{G})\to \Sc(\Ar{F})$ restricts to an
inclusion $\Sc_{\mu_F(C)} \to \Sc_{\mu_G(C)}$ (compare \cite[Lemma
5.12]{dD2013}).
The second claim follows from Lemma \ref{lem:duecose}.(b).
\end{proof}

\begin{df}
  Define $$\MS_{F_0}:= \gre{\GC \mathscr D_{F_0}}.$$ 
\end{df}

\begin{rem}\label{rem:subc}
  Since $\GC \mathscr D_{F_0}$ is a subcategory of $\GC
    \mathscr D$, $ \MS_{F_0}$ is a subcomplex of $\Sal(\A)$.
\end{rem}

\begin{notation}
  We will form now on use a `column' notation for the Grothendieck
  construction. For a diagram $\mathscr D: \mathcal I \to \scwols$ we
  will write $\colobj{x}{i}$ for the object of $\GC \mathscr D$
  associated to $i\in \operatorname{Ob}\mathcal I$ and $x\in \operatorname{Ob}\mathscr D(i)$,
  and
  \begin{center}
    $\colobj{x_1}{i_1}\colmor{\mu}{f}\colobj{x_2}{i_2}$
  \end{center}
for the
  morphism corresponding to  $f\in\Mor_{\Ic}(i_1,i_2)$ and $\mu \in \Mor_{\mathscr
  D(i_2)}(\mathscr D (f)(x_1),x_2)$
\end{notation}

\begin{lem}
   The canonical projection $\pi: \Sal(\A)
  \to \gre{\Fc(\A)}$ restricts to $\pi_L: \MS_{F_0} \to \gre{\Fc(\A^L)}$.
\end{lem}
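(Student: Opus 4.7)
The plan is to unfold all the relevant Grothendieck constructions and observe that the claim reduces to a compatibility of the canonical projection functors at the categorical level, which can then be taken to geometric realizations.

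First, I would note that the cellularization of $T_c$ by $\gre{\Fc(\A)}$ restricts, as recalled just before Theorem \ref{thm:inclusion}, to the cellularization $\gre{\Fc(\A^L)}$ of $L_c$. Concretely, this means that $\Fc(\A^L)$ is a subposet of $\Fc(\A)$ (its elements are precisely the faces of $\Fc(\A)$ whose underlying cell is contained in $L_c$), and the inclusion of posets induces the subcomplex inclusion $\gre{\Fc(\A^L)} \hookrightarrow \gre{\Fc(\A)}$.

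Next, by construction the diagram $\mathscr D_{F_0}$ is defined on $\Fc(\A^L)^{op}$ and takes values in subcategories $\Sc^{F_0}(\Ar{F}) \subseteq \Sc(\Ar{F}) = \mathscr D(F)$, with morphisms of $\mathscr D_{F_0}$ given by restriction of the corresponding maps of $\mathscr D$ (this is precisely the content of Remark \ref{rem:subc}, which already identifies $\MS_{F_0}$ as a subcomplex of $\Sal(\A)$). Consequently, the evident inclusion functor $\iota: \GC \mathscr D_{F_0} \hookrightarrow \GC \mathscr D$ fits into a strictly commutative square of acyclic categories
$$
\begin{CD}
\GC \mathscr D_{F_0} @>\iota>> \GC \mathscr D \\
@Vp_LVV @VpVV \\
\Fc(\A^L) @>>> \Fc(\A)
\end{CD}
$$
where the vertical functors are the canonical projections $\colobj{x}{F}\mapsto F$ described in Remark \ref{rem:projGC}.

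Taking nerves, and using that $\pi$ is by definition $\gre{p}$ (Remark \ref{rem:proj}) while the induced map $\gre{\iota}$ realizes the inclusion $\MS_{F_0}\hookrightarrow \Sal(\A)$, we obtain the commutative square
$$
\begin{CD}
\MS_{F_0} @>\subseteq >> \Sal(\A) \\
@V\gre{p_L}VV @V\pi VV \\
\gre{\Fc(\A^L)} @>\subseteq>> \gre{\Fc(\A)}.
\end{CD}
$$
This shows that $\pi$ carries $\MS_{F_0}$ into $\gre{\Fc(\A^L)}$ and that the restriction coincides with $\pi_L := \gre{p_L}$, proving the lemma. There is no real obstacle here: everything is bookkeeping once one notices that $\Fc(\A^L)\subseteq \Fc(\A)$ as a subposet and that $\mathscr D_{F_0}$ is a subdiagram of $\mathscr D$ along this inclusion.
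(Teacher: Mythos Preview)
Your argument is correct and is precisely an explicit unfolding of what the paper means by ``a check of the definitions, e.g.\ with Remark~\ref{rem:subc}'': the commutative square of Grothendieck projections over the inclusion $\Fc(\A^L)\hookrightarrow\Fc(\A)$ is exactly the content being invoked. The only cosmetic point is that $\Fc(\A)$ and $\Fc(\A^L)$ are in general scwols rather than posets, so ``subcategory'' would be more accurate than ``subposet''; this does not affect the proof.
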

\begin{proof}
  This is a check of the definitions, e.g. with Remark \ref{rem:subc}.
\end{proof}

\begin{df}
  Let $\mathscr K_L$ be the constant diagram
  \begin{align*}
    \mathscr K_{L}: \Fc(\A^L)^{op} \to &\scwols\\
    F\mapsto & \Sc(\Ar{L}) \\
 m \mapsto &  \textrm{id}.
  \end{align*}
\end{df}

\begin{df}\label{def:jj}
  Given $F\in \Fc(\A^L)$ let $$\xi{{[F]}}_{F_0}: \SF{F_0}(\Ar{F}) \to
\Sc(\Ar{F}_{\vert F_0\vert = L}) =\Sc(\Ar{L})$$
denote the map described in Definition \ref{def:j} referred to the
`ambient' arrangement $\Ar{F}$.
\end{df}

\begin{lem}
  The maps $\xi[F]_{F_0}$ of Definition \ref{def:jj} induce a natural
  transformation
  $$\mathscr D_{F_0} \Rightarrow \mathscr K_L$$
  and thus a functor
  $$ \Xi_{F_0}: \GC \mathscr D_{F_0} \to \GC \mathscr K_L $$
  which, moreover, induces homotopy equivalence of nerves.
\end{lem}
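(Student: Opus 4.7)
The plan is to verify naturality by an explicit sign-vector computation using Remark \ref{rem:im}, and then deduce the homotopy equivalence of nerves from the pointwise principle for morphisms of diagrams (\S \ref{sec:preparation}) combined with Lemma \ref{lem:Thomason}.

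First I would check that the codomain $\Sc((\Ar{F})_{|F_0|})$ of $\xi[F]_{F_0}$ is independent of $F\in \Fc(\A^L)$ and equals $\Sc(\Ar{L})$. Indeed, $F_0 = \overline{B_0}\cap L_0$ has affine span $L_0 = \cap \Ar{L}$. For any $F\in \Fc(\A^L)$ we have $|F|\subseteq L \subseteq L_0$, hence $\Ar{L}\subseteq \Ar{F}$, and a hyperplane of $\Ar{F}$ contains $L_0$ if and only if it contains $L$, i.e., lies in $\Ar{L}$. Thus $L_0\in \Ll(\Ar{F})$ and $(\Ar{F})_{L_0}=\Ar{L}$, so $\xi[F]_{F_0}:\SF{F_0}(\Ar{F})\to \Sc(\Ar{L}) = \mathscr K_L(F)$ for every $F$, and the transition maps of $\mathscr K_L$ are identities as required.

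Naturality is then a direct computation. Given a morphism $m:F\to G$ in $\Fc(\A^L)$ and $[K,C]\in \SF{F_0}(\Ar{G})$, the transition map of $\mathscr D_{F_0}$ sends $[K,C]$ to $[i_m(K),i_m(C)] \in \SF{F_0}(\Ar{F})$, so
$$\xi[F]_{F_0}(j_m([K,C])) = [i_m(K)_{L_0},\, i_m(C)_{L_0}], \qquad \xi[G]_{F_0}([K,C]) = [K_{L_0},C_{L_0}].$$
Since $L_0$ is a flat of both $\Ar{F}$ and $\Ar{G}$, Remark \ref{rem:im} yields $i_m(K)_{L_0}=K_{L_0}$ and $i_m(C)_{L_0}=C_{L_0}$, so the two expressions coincide. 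This produces the natural transformation $\mathscr D_{F_0}\Rightarrow \mathscr K_L$ whose image under the (covariant) Grothendieck construction is the desired $\Xi_{F_0}$.

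The homotopy-equivalence claim is then essentially formal. Proposition \ref{xi:homeq}, applied to the ambient arrangement $\Ar{F}$ for each $F$, asserts that $\xi[F]_{F_0}$ induces a homotopy equivalence of nerves. Passing to nerves, the induced morphism $\gre{\mathscr D_{F_0}}\Rightarrow \gre{\mathscr K_L}$ of $\tops$-diagrams is pointwise a homotopy equivalence, so the lemma on morphisms of diagrams recalled in \S \ref{sec:preparation} produces a homotopy equivalence $\hocolim\gre{\mathscr D_{F_0}} \simeq \hocolim\gre{\mathscr K_L}$. Lemma \ref{lem:Thomason} identifies these respectively with $\gre{\GC\mathscr D_{F_0}}$ and $\gre{\GC\mathscr K_L}$, so $\gre{\Xi_{F_0}}$ is a homotopy equivalence. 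The only substantive point is the sign-vector verification of naturality; everything else is an assembly of tools already in place.
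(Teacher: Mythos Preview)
Your proof is correct and follows essentially the same route as the paper: verify naturality via the sign-vector identity from Remark \ref{rem:im}, then invoke the pointwise homotopy equivalence of the $\xi[F]_{F_0}$ together with the diagram-morphism lemma and Thomason's identification (Lemma \ref{lem:Thomason}). The only cosmetic difference is that the paper cites Lemma \ref{lem:fl} together with Quillen's Theorem A for the pointwise equivalence, whereas you invoke Proposition \ref{xi:homeq} directly; both are already established in the paper and either suffices.
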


\begin{proof}
In order to check that the diagram
$$
  \xymatrix{
  \mathscr D_{F_0}(F)  \ar[r]^{\xi{\scriptstyle[F]}_{F_0}} & \Sc(\Ar{L})\\
  \mathscr D_{F_0}(G) \ar[u]^{j_m} \ar[r]_{\xi{\scriptstyle[G]}_{F_0}} & \Sc(\Ar{L})
  \ar[u]_{ = } }
$$
commutes it is enough to see that, for every $K\in \Fc(\Ar{G})$,
$ i_m(K)_{\vert F_0\vert }=K_{\vert F_0\vert}$, as is proved in Remark
\ref{rem:im}. 
Thus, the maps $\xi{[F]}_{F_0}$ induce a well-defined natural
transformation, and thereby the required functor 
$\Xi_{F_0}$, acting on objects $\colobj{K}{F}$ and morphisms $\colobj{\geq}{m}$
(notice that every $\mathscr D_{F_0}(F)$ is indeed a poset), as
$$
\Xi_{F_0} \colobj{K}{F} = \colobj{\xi{{[F]}_{F_0}}(K)}{F}, \quad\quad
\Xi_{F_0} \colobj{\geq_{\,} \\ \,}{m^{\,}}= \colobj{\geq \\ \,}{m}.
$$

Since each map $\xi[F]_{F_0}$ is a homotopy equivalence (by Lemma
\ref{lem:fl} via Quillen's Theorem A \cite{Quillen}), the homotopy
theorem \cite[Proposition 2.3]{WZZ} ensures that the natural
transformation induces homotopy equivalence between homotopy colimits,
thus also between the Grothendieck constructions, as claimed.
\end{proof}

We consider nerves as simplicial sets, and thus denote cells in the
geometric realization of a category by the corresponding chain of morphisms.

\begin{lem}
  A cell of $\MS_{F_0}$ has the form
$$
\sigma= \colobj{D_0}{G_0} \colmor{\geq}{m_1} \colobj{D_1}{G_1} \cdots \colobj{D_k}{G_k}
$$
where $G_i\in \Ob(\Fc(\A^l))$, $m_i: G_{i-1}\to G_i \in
\Mor(\Fc(\A^L))$, $D_i\in \Sc(\Ar{G_i})$ and $D_{i-1} \geq
j_{m_i}(D_i)$. 

Then the function mapping $\sigma$ to 
$$
\sigma \mapsto (G_0 \stackrel{m_1}{\longrightarrow} G_1 \cdots
\stackrel{m_k}{\longrightarrow} G_k, \widehat {\xi_{F_0} D_0} \geq \ldots \geq
\widehat {\xi_{F_0} D_k}) 
$$ 
(where for $D\in \Sc(\Ar{G_i})$ we write $\widehat D := j_{m_i \circ \cdots \circ m_1 } (D_i) \in \Sc(\Ar{G_0})$) induces a homotopy equivalence $\Theta_{F_0}: \MS_{F_0} \to \gre{\Fc(\A^L)} \times \gre{\Sc(\A[L])}$.  \end{lem}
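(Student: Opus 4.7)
The first assertion is essentially unpacking the definition of the Grothendieck construction. By definition of $\GC \mathscr D_{F_0}$, an object is a pair $\colobj{D}{G}$ with $G\in\Fc(\A^L)$ and $D\in\mathscr D_{F_0}(G)\subseteq \Sc(\Ar{G})$, and a morphism $\colobj{D_0}{G_0}\colmor{\mu}{m} \colobj{D_1}{G_1}$ consists of a morphism $m:G_0\to G_1$ in $\Fc(\A^L)$ together with a morphism in the poset $\mathscr D_{F_0}(G_0)$ from $j_m(D_1)$ to $D_0$, i.e.\ the relation $D_0\geq j_m(D_1)$. A $k$-cell of the geometric realization $\MS_{F_0} = \gre{\GC\mathscr D_{F_0}}$ is then a chain of composable morphisms of exactly the shape stated, so the first claim follows without further work.

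For the homotopy equivalence, the plan is to factor $\Theta_{F_0}$ through the functor $\Xi_{F_0}:\GC\mathscr D_{F_0}\to\GC\mathscr K_L$ built in the previous lemma, which is already known to induce a homotopy equivalence of nerves. Since $\mathscr K_L$ is a constant diagram with value $\Sc(\Ar{L})$, its Grothendieck construction is canonically isomorphic, as a category, to the product $\Fc(\A^L)\times \Sc(\Ar{L})$: an object $\colobj{E}{G}$ of $\GC\mathscr K_L$ is just a pair $(G,E)$ with $G\in \Fc(\A^L)$ and $E\in \Sc(\Ar{L})$, and a morphism $\colobj{E_0}{G_0}\colmor{\nu}{m}\colobj{E_1}{G_1}$ is a pair consisting of $m:G_0\to G_1$ and the order relation $E_0\geq E_1$ in $\Sc(\Ar{L})$ (since $\mathscr K_L(m)=\id$). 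Under this identification, the standard fact that the nerve functor converts products of acyclic categories to products of simplicial sets, together with the fact that geometric realization of simplicial sets commutes with finite products, gives a canonical cellular homeomorphism
\[
\gre{\GC\mathscr K_L}\;\cong\;\gre{\Fc(\A^L)}\times \gre{\Sc(\Ar{L})}.
\]

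Combining these two pieces yields the map $\Theta_{F_0}$: a cell $\sigma$ of $\MS_{F_0}$ is sent by $\gre{\Xi_{F_0}}$ to the cell obtained by replacing each $D_i$ with its image $\xi[G_i]_{F_0}(D_i)\in \Sc(\Ar{L})$, and then the product identification above splits this into the pair whose first component is the chain $G_0\xrightarrow{m_1}\cdots\xrightarrow{m_k} G_k$ in $\Fc(\A^L)$ and whose second component is the chain $\xi[G_0]_{F_0}(D_0)\geq\cdots\geq\xi[G_k]_{F_0}(D_k)$ in $\Sc(\Ar{L})$. The naturality relation $\xi[G_{i-1}]_{F_0}\circ j_{m_i} = \xi[G_i]_{F_0}$ established in the previous lemma guarantees that this is indeed a descending chain in $\Sc(\Ar{L})$, which matches the formula in the statement (the $\widehat{\cdot}$ notation encoding this passage to $\Sc(\Ar{L})$). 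Since $\gre{\Xi_{F_0}}$ is a homotopy equivalence and the product identification is a homeomorphism, $\Theta_{F_0}$ is a homotopy equivalence.

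The only mildly delicate point, and the one I would verify with care, is the identification of $\GC\mathscr K_L$ with $\Fc(\A^L)\times\Sc(\Ar{L})$ and the compatibility of nerves with products for acyclic categories; everything else is a translation between the Grothendieck-construction description of $\MS_{F_0}$ and the explicit cellular formula for $\Theta_{F_0}$.
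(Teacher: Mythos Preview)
Your proposal is correct and follows essentially the same route as the paper: factor $\Theta_{F_0}$ as the composition of the homotopy equivalence $\gre{\Xi_{F_0}}$ (from the previous lemma) with the canonical homeomorphism $\gre{\GC\mathscr K_L}\cong\gre{\Fc(\A^L)}\times\gre{\Sc(\Ar{L})}$. The paper calls this second map the ``reverse-subdivision'' homeomorphism $\Sigma_L$, and its argument is literally one line: $\Theta_{F_0}=\Sigma_L\circ\gre{\Xi_{F_0}}$. One small point: since the diagram $\mathscr K_L$ is indexed over $\Fc(\A^L)^{op}$, the Grothendieck construction is actually $\Fc(\A^L)^{op}\times\Sc(\Ar{L})$ rather than $\Fc(\A^L)\times\Sc(\Ar{L})$; this is harmless for the conclusion because a poset and its opposite have canonically homeomorphic nerves, but it is worth getting the variance right.
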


\begin{proof}
  Notice that there is an evident equivalence of categories $\GC
  \mathscr K_L \cong \Fc(\A^L)^{op} \times \Sc(\A[L])$ - thus we see that
$$\Theta_{F_0} = \Sigma_L \circ \vert \Xi_{F_0} \vert,$$ 
the composition of the homotopy equivalence induced by
  $\Xi_{F_0}$ and the canonical (``reverse-subdivision'', see e.g.\ \cite[4.2.2]{Kozlov})
  homeomorphism $\Sigma_L: \gre{\Fc(\A^L)^{op} \times \Sc(\A[L])}\to \gre{\Fc(\A^L)} \times \gre{\Sc(\A[L])} $.
\end{proof}

\begin{df}\label{df:choice}
We now fix for every layer $L$ a face $F_0=F_0(L)$ of $\A_0$ with
$\vert F_0 \vert = L$ and a chamber $B_0$ of $\A_0$ adjacent to $F_0$, and define
$$
\mathscr D_L := \mathscr D_{F_0},\quad\Sl{L} :=\Sl{F_0}, 
\quad \Theta_L:=\Theta_{F_0},
\quad \Xi_L:=\Xi_{F_0}
$$

We call $\varphi_L$ the inclusion map $\Sl{L} \into \Sal(\A)$ from
Theorem \ref{thm:inclusion}. 
\end{df}

Our next goal is the following theorem, which will justify the idea of coherent element given in Definition \ref{def:coherent}.

\begin{thm} \label{lem:coherent}
Fix an integer $q$ and let $L$ be a layer with $\rk(L) > q$. Consider the set 
$(\Cc_{\leq L})_q$  of all the layers $L'$ such that 
$L \subseteq L'$ and
$q = \rk(L')$. 
The following diagram of 
groups is commutative.
 $$
 \xymatrix{
  H^*(\Sal(\A);\Z)\ar[drr]_{\varphi_L^*} \ar[rr]^(.35){\underset{L' \in (\Cc_{\leq L})_q}{\bigoplus}\!\!\!\!\!\varphi_{L'}^*}
  && \underset{L' \in (\Cc_{\leq L})_q}{\bigoplus}
\!\!\!H^q(M(\Ar{L'});\Z) \otimes H^*(L';\Z) 
\ar[d]^(.6){ \underset{L' \in (\Cc_{\leq L})_q}{\sum}\!\!\!\!\!\Lc{(L' \leq L)}} 
\\
&& 
H^q(M(\Ar{L});\Z) \otimes H^*(L;\Z)
 }
 $$
\end{thm}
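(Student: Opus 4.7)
The plan is to reduce to a single-$L'$ diagram and then build an intermediate subcomplex through which both legs factor.

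First, using the Brieskorn decomposition $H^q(M(\Ar{L});\Z) \cong \bigoplus_{L' \in (\Cc_{\leq L})_q} b(H^q(M(\Ar{L'});\Z))$ from Lemma \ref{lem:brieskorn} (applied to the central subarrangement $\Ar{L'} \subseteq \Ar{L}$), I reduce the claim to commutativity, for each individual $L' \in (\Cc_{\leq L})_q$, of the diagram
$$
\xymatrix@C=3em{
H^*(\Sal(\A);\Z) \ar[r]^(.35){\varphi_{L'}^*} \ar[d]_{\varphi_L^*} & H^q(M(\Ar{L'});\Z) \otimes H^*(L';\Z) \ar[d]^{\mathrm{id} \otimes i^*} \\
H^q(M(\Ar{L});\Z) \otimes H^*(L;\Z) \ar[r]^{j^*_X \otimes \mathrm{id}} & H^q(M(\Ar{L'});\Z) \otimes H^*(L;\Z)
}
$$
where $X = X(L')$ is the flat of $\Ar{L}$ corresponding to $L'$ under Remark \ref{rem:posetArL}, and $j^*_X$ is the Brieskorn projection dual to the inclusion $j_{F(X)}$ of Proposition \ref{cor:cbl}. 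The equivalence uses injectivity of $b$ and the fact that $\sum_{L'} b \circ j^*_X = \mathrm{id}$ on $H^q(M(\Ar{L}))$.

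Second, I construct a subcomplex $\mathcal J$ of $\Sal(\A)$ contained in both $\Sl{L}$ and $\Sl{L'}$. Using the freedom of Definition \ref{df:choice}, I pick a common chamber $B_0 \in \Tc(\A_0)$ adjacent to $L_0$ (and hence to $L'_0$), and set $F_0(L) := \overline{B_0} \cap L_0 \leq \overline{B_0} \cap L'_0 =: F_0(L')$ in $\Fc(\A_0)$. Since $L \subseteq L'$ yields $\Fc(\A^L) \subseteq \Fc(\A^{L'})$, and since $F_0(L) \leq F_0(L')$ implies $\SF{F_0(L')}(\Ar{F}) \subseteq \SF{F_0(L)}(\Ar{F})$ for every $F \in \Fc(\A^L)$, the restricted diagram $\mathscr D_{L'}|_{\Fc(\A^L)^{op}}$ defines
$$
\mathcal J := \gre{\GC(\mathscr D_{L'}|_{\Fc(\A^L)^{op}})} \subseteq \Sl{L} \cap \Sl{L'} \subseteq \Sal(\A).
$$
Adapting the proof of Theorem \ref{thm:inclusion} to this restricted diagram (in particular, applying Proposition \ref{xi:homeq} fibrewise) gives a homotopy equivalence $\mathcal J \simeq L \times \Sal(\Ar{L'})$ compatible with the canonical projection to $L_c$. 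Under these identifications, the inclusion $\mathcal J \hookrightarrow \Sl{L'}$ is homotopic to $i \times \mathrm{id}_{\Sal(\Ar{L'})}$ and thus induces $\mathrm{id} \otimes i^*$ on cohomology, while the inclusion $\mathcal J \hookrightarrow \Sl{L}$ is homotopic to $\mathrm{id}_L \times j_{F_0(L')}$ and thus induces $j^*_X \otimes \mathrm{id}$ on cohomology by Proposition \ref{cor:cbl}. Both composites in the reduced square then factor through the restriction $H^*(\Sal(\A)) \to H^*(\mathcal J)$, so they agree.

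The main technical difficulty is the identification of the inclusion $\mathcal J \hookrightarrow \Sl{L}$ on the second K\"unneth factor. This reduces, for each $F \in \Fc(\A^L)$, to verifying that the composition
$$
\Sc(\Ar{L'}) \xleftarrow[\simeq]{\xi[F]_{F_0(L')}} \SF{F_0(L')}(\Ar{F}) \hookrightarrow \SF{F_0(L)}(\Ar{F}) \xrightarrow[\simeq]{\xi[F]_{F_0(L)}} \Sc(\Ar{L})
$$
is homotopic to the combinatorial Brieskorn inclusion $j_{F_0(L')}$ of Definition \ref{df:jb}. I expect this to follow from a direct sign-vector check using Proposition \ref{xi:homeq} together with the compatibility of the face operation $K \mapsto i_{F_0(L')}(K)$ with the projection $K \mapsto K_{L_0}$, as recorded in Remark \ref{rem:im}.
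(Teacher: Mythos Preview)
Your strategy is sound and runs parallel to the paper's, but is organized differently. Both arguments reduce to comparing the two legs through the fibrewise maps $\xi[F]_{F_0(L)}$ and $\xi[F]_{F_0(L')}$, and both ultimately hinge on the same sign-vector identity (essentially that $(i_{F_0(L')}(K))_{L_0}$ agrees with $i_{F_0(L')}^{\Ar{L}}(K_{L'_0})$ on $\Ar{L}$). The difference is packaging: you build an explicit intermediate subcomplex $\mathcal J = \gre{\GC(\mathscr D_{L'}\vert_{\Fc(\A^L)^{op}})}$ sitting inside both $\Sl{L}$ and $\Sl{L'}$ and factor both legs through $H^*(\mathcal J)$, whereas the paper never constructs $\mathcal J$ and instead uses contractible carrier maps (Lemma~\ref{CoCaMa}) to show directly that $J_{L'}\circ(\iota\times\id)$ and $J_L\circ(\id\times j_{F_0'})$ are carried by the same contractible carrier $C_{L'}\circ(\iota\times\id)$, hence homotopic. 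The carrier approach buys independence from the particular choices of $F_0(L)$, $F_0(L')$ in Definition~\ref{df:choice}: the paper's key inclusion $\xi[F]^{-1}_{F_0}(j_{F_0'}(S)) \subseteq \xi[F]^{-1}_{F_0'}(S)$ only uses $\vert F_0\vert \subseteq \vert F_0'\vert$, not $F_0 \leq F_0'$ as faces.

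Your route, by contrast, needs $F_0(L) \leq F_0(L')$ in $\Fc(\A_0)$ to get $\SF{F_0(L')}(\Ar{F}) \subseteq \SF{F_0(L)}(\Ar{F})$ and hence $\mathcal J \subseteq \Sl{L}$. This is not guaranteed by Definition~\ref{df:choice} as stated (the faces are chosen independently for each layer), so you are implicitly re-choosing. A single global chamber $B_0\in\Tc(\A_0)$ with $F_0(L):=\overline{B_0}\cap L_0$ for all $L$ does give a compatible system, so this can be arranged, but you should say so. Finally, the ``main technical difficulty'' you flag is genuinely the crux: the fibrewise composition $\xi[F]_{F_0(L)}\circ j_0^{F_0(L')}$ equals $j_{F_0(L')}$ on the nose (your sign-vector check via Remark~\ref{rem:im} works), but $\xi[F]_{F_0(L)}\vert_{\SF{F_0(L')}}$ and $j_{F_0(L')}\circ\xi[F]_{F_0(L')}$ are only homotopic, not equal, so the square $\Theta_L\vert_{\mathcal J}$ versus $(\id\times j_{F_0(L')})\circ\Theta_{\mathcal J}$ commutes only up to homotopy. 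You still need an argument for that homotopy at the level of Grothendieck constructions; this is precisely what the paper's Carrier Lemma step accomplishes.
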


\begin{lem}\label{CoCaMa}
  The map
$$
C_L: \Fc(\vert \Fc(\A^L) \vert \times \vert \Sc(\Ar{L})\vert) \to \gre{\GC
  \mathscr D_L} , x \mapsto \Theta^{-1}_L(x)
$$
is a contractible carrier map (in the sense of \cite[Chapter II]{LW}).

\end{lem}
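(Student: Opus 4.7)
The plan is to verify the two defining properties of a carrier map in the sense of \cite[Chapter II]{LW}: monotonicity with respect to the face relation, and contractibility of each assigned subcomplex. Monotonicity is immediate, since if $x\leq y$ in the face poset of the product complex then $\overline{x}\subseteq \overline{y}$, whence $\Theta_L^{-1}(\overline{x})\subseteq \Theta_L^{-1}(\overline{y})$ as subcomplexes of $\gre{\GC \mathscr D_L}$. The crux is to exhibit $C_L(x)$ as a contractible subcomplex.

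Fix a closed cell $\bar x = \bar\sigma \times \bar\tau$, where $\sigma$ is a simplex of $\gre{\Fc(\A^L)}$ corresponding to a chain $G_0 \leq G_1 \leq \cdots \leq G_p$ in $\Fc(\A^L)$ and $\tau$ a simplex of $\gre{\Sc(\Ar{L})}$ corresponding to a chain $S_0 \geq S_1 \geq \cdots \geq S_q$ in $\Sc(\Ar{L})$. Using the factorization $\Theta_L = \Sigma_L \circ \vert \Xi_L \vert$ from the preceding lemma, I would first note that $\Sigma_L^{-1}(\bar x)$ is, by construction of the reverse-subdivision homeomorphism, the subcomplex of $\gre{\GC \mathscr K_L}$ given by the order complex of the product subposet $\{G_0, \ldots, G_p\} \times \{S_0, \ldots, S_q\}$. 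Because $\vert \Xi_L \vert$ is the geometric realization of a functor, its preimage of this subcomplex is the nerve of the full subcategory $\mathcal{C}_x \subseteq \GC \mathscr D_L$ whose objects are the pairs $\colobj{D}{G_i}$ with $i\in\{0,\ldots,p\}$ and $\xi[G_i]_{F_0}(D) \in \{S_0,\ldots,S_q\}$.

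Next I would recognize $\mathcal C_x$ itself as a Grothendieck construction: it equals $\GC \mathscr C_x$ where $\mathscr C_x$ is the restriction of $\mathscr D_L$ to the chain $\{G_0,\ldots,G_p\}^{op}$, with values $\mathscr C_x(G_i) := \xi[G_i]_{F_0}^{-1}(\{S_0,\ldots,S_q\})$. By Thomason's theorem (Lemma \ref{lem:Thomason}), $\gre{\mathcal C_x} \simeq \hocolim \gre{\mathscr C_x}$. Because the index category $\{G_0,\ldots,G_p\}^{op}$ is a chain, its nerve is contractible, so by standard properties of homotopy colimits over contractible bases it suffices to prove that each value $\gre{\mathscr C_x(G_i)}$ is contractible and the transition maps are homotopy equivalences.

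For the contractibility of each $\gre{\mathscr C_x(G_i)}$ I would appeal to Quillen's Theorem A applied to the restriction of $\xi[G_i]_{F_0}$ to a projection $\mathscr C_x(G_i) \to \{S_0 \geq \cdots \geq S_q\}$: the fibers are $\xi[G_i]_{F_0}^{-1}(S_j)$, contractible by Corollary \ref{cor:pre-cont}, and the analogous contractibility of preimages of lower ideals of the chain follows from Lemma \ref{lem:fl} (the chain being contained in the principal ideal below $S_0$) combined with a further direct sign-vector argument based on Lemma \ref{lem:conv} to intersect with the chain condition. For the transition maps induced by $j_m: \Sc(\Ar{G_{i+1}}) \hookrightarrow \Sc(\Ar{G_i})$, one checks using Remark \ref{rem:im} that $i_m$ preserves the $\xi$-image, so the maps restrict to the fibered subposets and one argues fiberwise. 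The main technical obstacle I anticipate is the last point: carefully checking fiberwise contractibility of transition maps requires careful bookkeeping with sign vectors, but it reduces to the same kind of convex-region argument as Lemma \ref{lem:conv}. Combining, $\gre{\mathcal C_x} = \Theta_L^{-1}(\bar x)$ is contractible, establishing that $C_L$ is a contractible carrier map.
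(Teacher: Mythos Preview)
Your overall plan---recognizing $\Theta_L^{-1}(\bar x)$ as the nerve of a Grothendieck construction and then reducing contractibility to that of certain $\xi$-preimages---is the same as the paper's in spirit, but there is a real gap at the step where you prove that each value $\mathscr C_x(G_i)=\xi[G_i]_{F_0}^{-1}(\{S_0,\ldots,S_q\})$ is contractible.

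You propose Quillen's Theorem~A for the restriction $\xi\colon \mathscr C_x(G_i)\to\{S_0\geq\cdots\geq S_q\}$. But Quillen~A requires contractibility of the \emph{comma} fibers $\xi^{-1}(\{S_j,\ldots,S_q\})$ (or $\xi^{-1}(\{S_0,\ldots,S_j\})$), which are again preimages of sub-chains---so the argument becomes circular. Corollary~\ref{cor:pre-cont} only handles point fibers, and Lemma~\ref{lem:fl} handles preimages of \emph{full} principal lower ideals $\Sc(\Ar{L})_{\leq S_j}$, not the intersection of such an ideal with your fixed chain. Your proposed ``further sign-vector argument via Lemma~\ref{lem:conv}'' does not go through as stated: the set of faces $K$ with $K_{|F_0|}\in\{G_0,\ldots,G_q\}$ is in general \emph{not} of the form $\Delta(\A;N,Z,P)$ when the chain $G_0<\cdots<G_q$ is not saturated, so Lemma~\ref{lem:conv} does not apply directly.

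The paper circumvents this by a different decomposition. Rather than Grothendieck-ing over the $G$-chain alone (which forces you to deal with $\xi^{-1}(\text{S-chain})$), it covers $\Theta_L^{-1}(x)$ by the pieces $|\Xi_L|^{-1}(y)$ as $y$ ranges over the \emph{cells of the triangulation} $\Sigma_L^{-1}(x)$ of the product ball. Each such $y$ is a single simplex, i.e.\ a chain in $\Fc(\A^L)^{op}\times\Sc(\Ar{L})$, so $|\Xi_L|^{-1}(y)$ is a Grothendieck construction over a chain whose values are \emph{point} fibers $\xi[F_i]_{F_0}^{-1}(S_i)$---contractible by Corollary~\ref{cor:pre-cont} with no further work. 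The nerve of this cover is $\Fc(\Sigma_L^{-1}(x))$, the face poset of a triangulated ball, hence contractible; the generalized Nerve Lemma and the quasifibration lemma then finish the argument. The key point is that passing to the triangulation of the product lets one use only single-point preimages of $\xi$, avoiding chain preimages entirely.
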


\begin{proof}
  Let us consider a cell $x$ as in the claim, say 
$$ x = \gre{(F_k\mk{k-1} F_{k-1} \cdots \mk{1} F_1)} \times \gre{(S_l \geq \cdots
\geq S_1)}. $$ Since $x$ is a
  cell in a product of two regular trisps (see \cite[p.\ 153]{Kozlov}), its
  embedding is homeomorphic to a closed ball. Also, since
  $\Theta_L=\Sigma_L\circ \gre{\Xi_L}$, we first compute the subcomplex
  $\Sigma_L^{-1}(x)$. This is the union of all cells that
  triangulate $x$, hence it consists (see e.g.\ \cite[p.\ 169]{Kozlov}) of  the subcomplex generated by the
  following union of cells (i.e., these cells and every cell in their boundaries).
  $$
  \Sigma_L^{-1}(x) = 
  \bigcup_{f,g} \big\vert(F_{f(r)},S_{g(r)}) 
  \stackrel{(n(f)_{r-1},\geq)}{\longrightarrow}
  \cdots 
  (F_{f(1)},S_{g(1)})
  \big\vert \subseteq \gre{\Fc(\A^{L})^{op}\times \Sc(\Ar{L})}
  $$
where $r:=k+l$ and the union ranges over all 
pairs $(f,g)$, where

\eqnum\label{fgn} $	\left\{\begin{tabular}[l]{l}
 $f:[r]\to [k]$  and 
 $g:[r]\to [l]$ are order-preserving surjections,\\
 and the morphisms $n(f)_{i}$ are defined
 as
$
n(f)_i=\left\{\begin{array}{ll}
m_{f(i)} & \textrm{if } f(i+1)\neq f(i), \\
\id & \textrm{else.}
\end{array}\right.
$
\end{tabular}\right.$

This subcomplex is then, by construction, a triangulation of a closed ball. The preimage
under $\gre{\Xi_L}$ of $\Sigma_L^{-1}(x)$ can be seen as being
covered by subcomplexes of the form $\gre{\Xi_L}^{-1}(y)$, where
$y$ is a cell of $\Sigma_L^{-1}(x)$. The face poset of
the complex $\Sigma_L^{-1}(x)$ is the nerve of this covering,
and thus by the generalized Nerve Lemma \cite[Theorem 15.24]{Kozlov}
$\Theta_L^{-1}(x)$ has the homotopy type of the homotopy
colimit of the associated nerve diagram \cite[15.4.1]{Kozlov}

$$
\mathscr N_{x}: \Fc(\Sigma_L^{-1}(x)) \to \tops, \quad
y \mapsto \gre{\Xi_L}^{-1}(y)
$$
with maps being inclusions. Now we claim that it is enough to prove
that the spaces of $\mathscr N_{x}$ are contractible. Indeed, in
that case the diagram maps of $\mathscr N_{x}$ will be inclusions
of contractible subcomplexes into contractible complexes, and thus in
particular they will be homotopy equivalences. The quasifibration
lemma \cite[Proposition 3.6]{WZZ} then applies and, because the index
category of $\mathscr N_x$ is
contractible (it is the face poset of a triangulation of a closed
ball), will say that $\hocolim \mathscr N_x$ has the same homotopy
type of any of the spaces $\mathscr N_x(y)$ - and hence will
be contractible as required.

To conclude the proof it is thus enough to prove the following.
\begin{itemize}
\item[] {\bf Claim.} For any $y\in \Fc(\Sigma_L^{-1}(x))$,
  the complex $\gre{\Xi_L}^{-1}(y)$ is contractible.
\item[] {\em Proof.} Fix such an $y$, say $y=\vert \sigma \vert$ for a chain
$$
\sigma = (F_k, S_k) \stackrel{(m_{k-1},\geq)}{\longrightarrow} \cdots (F_1, S_1)
$$
in $\Fc(\A)^{op}\times \Sc(\Ar{L})$.
The cells of $\gre{\GC \mathscr D_L}$ which map to $y$ under
$\gre{\Xi_L}$ are all and only those of the form
$$
\colobj{\widetilde S_k}{F_k} \colmor{\geq}{m_{k-1}} \ldots
\colobj{\widetilde S_1}{F_1}
$$
with $\widetilde S _j \in \xi[{F_{j}}]_{F_0}^{-1}(S_j)$. Now, those are exactly
the cells that make up the trisp $\gre{\GC \mathscr G}$, where
$$
\mathscr G: \sigma^{op} \to \scwols,\quad \colobj{S_i}{F_{i}} \mapsto
\xi[F_{i}]^{-1}_{F_0}(S_i), \quad m_i\to j_{m_i}
$$
now, $\gre{\GC \mathscr G}$ is contractible because by Corollary \ref{cor:pre-cont}
the spaces are contractible, hence the diagram
morphisms are homotopy equivalences (inclusions of con\-tracti\-ble
subcomplexes in contractible subcomplexes) and - since the index
category is contractible - the homotopy colimit of $\gre{\mathscr G}$
is contractible. This complete the proof of our claim, hence the
Lemma. $\triangle$
\end{itemize}

\end{proof}

\begin{scholium}\label{ScillaECariddi}
  Given a cell $x=\gre{ F_k\mk{k-1} F_{K-1} \cdots \mk{1} F_1} \times\gre{ S_l \geq \cdots
\geq S_1}$ , the explicit expression for $C_L(x)$ is as the
subcomplex consisting of the following cells
$$
\bigg\vert \colobj{\widetilde S_{g(r)}}{F_{f(r)}}\colmor{
\geq
}{n(f)_{r-1}}
  \cdots 
  \colobj{\widetilde S_{g(1)}}{F_{f(1)}}
\bigg\vert
$$
where $\widetilde S_{g(j)} \in \xi[F_{f(j)}]_{F_0}^{-1}(S_{g(j)})$ and $f,g,n(f)_i$ are
defined in \eqref{fgn}.
\end{scholium}

\begin{proof}[Proof of Theorem \ref{lem:coherent}]
We consider the following diagram.

$$
 \xymatrix{& 
   \ar[dl]^{\varphi_{L'}}\Sc_{L'} = \gre{\GC \mathscr D_{L'}}\ar[r]_(.43){homeq}^(.43){\Theta_{L'}} 
   &
   \gre{\Fc(\A^{L'})} \times \gre{\Sc(\Ar{L'})} \\
   \Sal(\A) 
   &
   &
   \ar[u]^{\iota \times \id}
   \gre{\Fc(\A^{L})} \times \gre{\Sc(\Ar{L'})}
   \ar[d]^{\id \times j_{F'_0}}\\
   & 
   \ar[ul]^{\varphi_{L}}
   \Sc_{L} = \gre{\GC \mathscr D_{L}}\ar[r]^(.43){\Theta_{L}}_(.43){homeq} 
   &
   \gre{\Fc(\A^{L})} \times \gre{\Sc(\Ar{L})} \\
}
 $$
where $F_0'$ is the face associated to $L'$ (Definition \ref{df:choice}) and
$j_{F_0'}$ is as in Definition \ref{df:jb}

Now, using the Carrier Lemma \cite[Proposition II.9.2]{LW} and Lemma \ref{CoCaMa} choose maps
$J_L$ and $J_{L'}$ carried by  $C_L$ resp.\ $C_{L'}$. The explicit form
of $C_L$ given before shows that both carrier maps $\Theta_L \circ
C_L$ and $C_L \circ \Theta_L$ carry the identity map. This proves -
again, by the Carrier Lemma, that $J_L$ is a homotopy inverse to
$\Theta_L$ (and similarly for $J_{L'}$ and $\Theta_{L'}$). Consider
then the following diagram.

\begin{equation} \label{eq:commute}
\begin{split}
\xymatrix{& 
   \ar[dl]^{\varphi_{L'}}\Sc_{L'} = \gre{\GC \mathscr D_{L'}}
   &
   \ar[l]^(.57){J_{L'}}_(.57){homeq}
   \gre{\Fc(\A^{L'})} \times \gre{\Sc(\Ar{L'})} \\
   \Sal(\A) 
   &
   &
   \ar[u]^{\iota \times \id}
   \gre{\Fc(\A^{L})} \times \gre{\Sc(\Ar{L'})}
   \ar[d]^{\id \times j_{F_0'}}\\
   & 
   \ar[ul]^{\varphi_{L}}
   \Sc_{L} = \gre{\GC \mathscr D_{L}} 
   &
   \ar[l]^(.57){J_{L}}_(.57){homeq}
   \gre{\Fc(\A^{L})} \times \gre{\Sc(\Ar{L})} \\
}
\end{split}
\end{equation}

Again by Lemma \ref{CoCaMa} the map $C_{L'}\circ (\iota \times \id)$ is a
contractible carrier map which carries $J_{L'}\circ (\iota \times
\id)$. In the next claim we will prove that the same carrier map
carries also  $J_L\circ (\id\times j_{F_0'})$. By the contractible
carrier Lemma \cite[Proposition II.9.2]{LW} then the two compositions in the diagrams are
homotopic - thus in particular the diagram commutes in cohomology.
\begin{itemize}
\item[] {\bf Claim.} $C_{L'}\circ (\iota \times \id)$ carries $J_L\circ (\id\times j_{F_0}).$
\item[] {\em Proof.} We know that $C_L\circ (\id\times j_{F_0})$
  carries $J_L\circ (\id\times j_{F_0})$. It is then enough to prove
  that, for every cell $\tau$ of $\gre{\Fc (\A^L)} \times
  \gre{\Sc(\Ar{L'})}$,
  \begin{equation}
C_L((\id\times j_{F_0'}) (\gre{\tau})) \subseteq
  C_{L'}((\iota \times \id). \label{eq:3}
\end{equation}
To this end, we refer to the explicit description of these complexes
given in Scholium \ref{ScillaECariddi} and see that, for a given $\tau$, the
composable chains indexing maximal cells of the two subcomplexes are
completely determined by the `$S$-components' of the objects - since
the $F$-components and the morphisms are completely determined by the
$F$-component of $\tau$ and the fact that $\Sc(\Ar{L})$ and
$\Sc(\Ar{L'})$ are posets.

Thus to prove \eqref{eq:3} it is enough to prove that, for every cell
$S$ of $\Sc(\Ar{L'})$ and every $F\in \Fc(\A^L)$,
  \begin{equation}
    \xi[F]^{-1}_{F_0}(j_{F_0'} (S)) \subseteq \xi[F]^{-1}_{F_0'} (S).
 \label{eq:4}
\end{equation}
This is now a computation. Write $S=[G,K]$ and recall the definition
of $j_{F_0'}$ (Definition \ref{df:jb}). Then $j_{F_0'}([G,K]) = [i_{F_0'}(G),i_{F_0'}(K)]$ and,
with the expression given in Lemma \ref{lem:preim}, we need to verify
$$
\xi[F]_{F_0'}(\{[R,i_{F_0'}(K)_R] \mid R_{\vert F_0 \vert} =
i_{F_0'}(G)\})
=\{ [G,K]\}.
$$
Now, if $R_{\vert F_0 \vert} =
i_{F_0'}(G)$, then
$$
R_{\vert F_0'\vert} = (R_{\vert F_0\vert})_{\vert F_0' \vert} =
(i_{F_0'}(G) )_{\vert F_0' \vert} = G
$$
(the first equality because $\vert F_0 \vert \subseteq \vert F_0'\vert$, the second by assumption,
the third by Lemma \ref{piccolezze}.(1)) and the condition on $R$ is also satisfied. 
So 
we only need to show that
$(i_{F_0'}(K)_R)_{\vert F_0' \vert}=K.$ 

Again, we compute
$$
\sv{(i_{F_0'}(K)_R)_{\vert F_0' \vert}} (H)=
\left\{\begin{array}{ll}
  \sv{R}(H) & \textrm{ if } \sv{F'_0}(H)=0, \sv{R}(H)\neq 0 \\
  \sv{i_{F'_0}(K)}(H) = \sv{K} (H) & \textrm{ if } \sv{F'_0}(H)=0, \sv{R}(H) =0 
\end{array}\right.
$$ 
Now consider the second alternative and remember that $R_{\vert F_0' \vert }=G$, thus when $\sv{F'_0}(H)=0$ we
have $\sv{R}(H)=\sv{G}(H) \leq \sv{K}(H)$ and if additionally $\sv{R}(H)\neq 0$, $\sv{R}(H)=\sv{G}(H)=\sv{K}(H)$,
as required. Hence the claim follows.
\end{itemize}

In order to complete the proof of Theorem \ref{lem:coherent} we can consider the cohomology diagram corresponding to diagram \eqref{eq:commute}.

\begin{equation} \label{eq:commutecohom}
\begin{split}
\xymatrix{
					& & & H^*(M(\Ar{L'});\Z) \otimes H^*(L';\Z) \ar[d]^{\id \otimes \iota^*}\\
H^*(\Sal(\A) ;\Z) \ar[rrru]^(.40){\varphi_{L'}^*} \ar[rrrd]_{\varphi_L^*}                  & & & H^*(M(\Ar{L'});\Z) \otimes H^*(L;\Z)  \\
                  & & & H^*(M(\Ar{L});\Z) \otimes H^*(L;\Z) \ar[u]_{j_{F_0'}^* \otimes \id} \\ 
                  }
\end{split}
\end{equation}

We project to the cohomological degree $q$ for the first factor of the tensor product for the terms in the right side of diagram \eqref{eq:commutecohom} and we take the direct sum for all $L' \in \Cc_{\leq L})_q.$ 
We get:
\begin{equation*} \label{eq:commutecohom2}
\begin{split}
\xymatrix{
& & & \underset{L' \in (\Cc_{\leq L})_q}{\bigoplus}
\!\!\! H^q(M(\Ar{L'});\Z) \otimes H^*(L';\Z) \ar[d]^{\id \otimes \iota^*}\\
H^*(\Sal(\A) ;\Z) \ar[rrru]^(.40){\underset{L' \in (\Cc_{\leq L})_q}{\bigoplus}
\!\!\! \varphi_{L'}^*} \ar[rrrd]_{\varphi_L^*}                  
& & & \left( \underset{L' \in (\Cc_{\leq L})_q}{\bigoplus}
\!\!\! H^q(M(\Ar{L'});\Z) \otimes \right) H^*(L;\Z)  \\
& & & H^q(M(\Ar{L});\Z) \otimes H^*(L;\Z) \ar[u]_{\left( \underset{L' \in (\Cc_{\leq L})_q}{\bigoplus}
\!\!\! j_{F_0'}^* \right) \otimes \id}. \\ 
}\end{split}
\end{equation*}
From the Combinatorial Brieskorn Lemma (Proposition \ref{cor:cbl}) the left factor $\underset{L' \in (\Cc_{\leq L})_q}{\bigoplus}
\!\!\! j_{F_0'}^* $ of the bottom right map
in an isomorphism. 
Hence we can invert the bottom right arrow and, composing with the upper right map $\id \otimes \iota^*$ we get $\sum_{L' \in (\Cc_{\leq L})_q}\Lc{(L' \leq L)}$. Thus, the commutativity of the diagram of the statement of the Theorem follows.
\end{proof}

\section{The complexified case} \label{sec:main}

In this section we provide an explicit description of the cohomology ring of the complement of a complexified toric arrangement. To this end we will consider for all $L \in \C$ the inclusion $\varphi_L:\Sc_L \to \Sal(\A)$ from Theorem \ref{thm:inclusion} and use these maps in order to define a map $$\Phi: \bigsqcup_{L \in \C} \Sc_L \to \Sal(\A).$$ 
The understanding of the corresponding cohomology homomorphism $\Phi^*$ is the key ingredient for the proof of Theorem \ref{thm:main} in the complexified case. 

\subsection{Spectral Sequences} \label{sec:lss} \label{sec:SpSe}

We consider the Leray spectral sequence $\widehat E_*^{p,q}$ induced by the projection $\pi: \Sal(\A) \to T_c$ (defined as in Remark \ref{rem:proj}) with second page
$$
\widehat E_2^{p,q} = H^p(T_c;\mathscr H^q (\pi;\Z )),
$$
where $\mathscr H^q (\pi;\Z )$ is the sheaf given by the sheafification of the presheaf
$$
U \mapsto H^q(\pi^{-1}(U);\Z).
$$

\begin{rem}
The spectral sequence above is equivalent to the one used by Bibby in \cite{bibby2013}, induced
by the inclusion $M(\A) \into T$. In fact the inclusions $T_c \into T$ and $\Sal(\A) \into M(\A)$ are homotopy equivalences and the following square is homotopy commutative.
$$\xymatrix{
\Sal(\A)\ar[r]^{\subset} \ar[d]^{\pi} & M(\A) \ar[d]^{\subset} \\
T_c \ar[r]^{\subset} & T
}$$
Moreover for every point $z \in T_c$, let $L_z := \bigcap_{z \in L \subset \Cc}L$  be the unique layer containing $z$ in its interior and let $\Cc_{(z)}:= \{L \in \Cc \mid z \in L\}= \Cc_{\leq L_z}$ be the set of layers containing $z$. Then, given an open set $U \in T$ containing $z$, if $U$ is small enough then $\pi^{-1}(U \cap T_c)$ is homotopy equivalent to $\Sal(\Ar{L_z})$
and the inclusion $\pi^{-1}(U \cap T_c) \into U \cap M(\A)$ is an homotopy equivalence.
\end{rem}

Given a layer $L \in \Cc$, let $\pi_L$ be the restriction of the map $\pi$ to the subcomplex $\Sl{L}$: $\pi_L: \Sl{L} \to T_c$. Let $_LE_*^{p,q}$ be the Leray spectral induced by the projection $\pi_L$, with second page
$$
_LE_2^{p,q} = H^p(T_c;\mathscr H^q (\pi_L;\Z )).
$$
We notice that the sheaf $\mathscr H^q (\pi_L;\Z )$ is supported on the subtorus $L_c = L \cap T_c$.

\begin{lem}  \label{lem:decompss} 
The sheaf $\mathscr H^q (\pi;\Z )$ is a direct sum of the sheaves
$$
\bigoplus_{L \in \Cc_q} [H^q(M(\Ar{L});\Z)]_{L_c}
$$
where $[H^q(M(\Ar{L});\Z)]_{L_c}$ is the restriction of the constant sheaf 
$H^q(M(\Ar{L});\Z)$ to the layer $L_c$. 
We have the following decomposition
\begin{equation}\label{eq:dec1}
\widehat E_2^{p,q} = 
\bigoplus_{L' \in \Cc_q} 
H^p(L';\Z)\otimes H^q(M(\Ar{L'});\Z) 
\end{equation}
and for every layer $L \in \Cc$
\begin{equation} \label{eq:decL}
_{L} E_2^{p,q} = 
\bigoplus_{L'' \in (\Cc_{\leq L})_q} 
H^p(L;\Z)\otimes H^q(M(\Ar{L''});\Z).
\end{equation}
\end{lem}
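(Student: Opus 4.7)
The plan is to first identify the stalks of the Leray sheaf $\mathscr H^q(\pi;\Z)$ via the local triviality of $\pi$ combined with Brieskorn's Lemma, and then to upgrade this pointwise identification to a global decomposition of sheaves. The remark preceding the lemma shows that for any $z \in T_c$ and any sufficiently small open $U \ni z$, $\pi^{-1}(U \cap T_c) \simeq \Sal(\Ar{L_z}) \simeq M(\Ar{L_z})$, so $\mathscr H^q(\pi;\Z)_z \cong H^q(M(\Ar{L_z});\Z)$. Applying Brieskorn's Lemma \ref{lem:brieskorn} to the hyperplane arrangement $\Ar{L_z}$ and identifying $\Ll(\Ar{L_z}) \cong \Cc_{\leq L_z}$ via Remark \ref{rem:posetArL}, this yields
$$\mathscr H^q(\pi;\Z)_z \;\cong\; \bigoplus_{L \in (\Cc_{\leq L_z})_q} H^q(M(\Ar{L});\Z).$$
The summands are indexed precisely by the rank-$q$ layers containing $z$, matching stalkwise the proposed sheaf $\bigoplus_{L \in \Cc_q}[H^q(M(\Ar{L});\Z)]_{L_c}$.

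To upgrade this pointwise match into an isomorphism of sheaves I would verify compatibility with the restriction maps. For $V \subseteq U$ opens in $T_c$ with $z' \in V$ more generic than $z \in U$ (so that $L_{z'} \supseteq L_z$ as subsets of $T_c$), the inclusion $\pi^{-1}(V \cap T_c) \subseteq \pi^{-1}(U \cap T_c)$ is modelled by the subcomplex inclusion $\Sal(\Ar{L_{z'}}) \hookrightarrow \Sal(\Ar{L_z})$ coming from the subarrangement inclusion $\Ar{L_{z'}} \subseteq \Ar{L_z}$. The induced pullback $H^q(\Sal(\Ar{L_z})) \to H^q(\Sal(\Ar{L_{z'}}))$ is exactly the projection of Brieskorn summands made explicit in Proposition \ref{cor:cbl}: it preserves each summand indexed by $L \in (\Cc_{\leq L_{z'}})_q \subseteq (\Cc_{\leq L_z})_q$ and kills the summands indexed by layers $L$ containing $z$ but not $z'$. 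This is precisely the restriction behaviour of $\bigoplus_{L \in \Cc_q}[H^q(M(\Ar{L});\Z)]_{L_c}$, whose summand indexed by $L$ is supported on $L_c$. Hence the two sheaves are globally isomorphic.

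The decomposition of the sheaf then immediately yields the formula for $\widehat E_2^{p,q}$. Since each $L_c \hookrightarrow T_c$ is closed, the pushforward of the corresponding constant sheaf is acyclic, so $H^p(T_c;\,[H^q(M(\Ar{L});\Z)]_{L_c}) = H^p(L_c;\,H^q(M(\Ar{L});\Z))$. Combined with the torsion-freeness of $H^*(L_c;\Z) = H^*(L;\Z)$ (both are exterior algebras on free generators, being the cohomology of a compact torus) and universal coefficients without any Tor contribution, this becomes $H^p(L;\Z) \otimes H^q(M(\Ar{L});\Z)$; summing over $L \in \Cc_q$ yields \eqref{eq:dec1}. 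For the spectral sequence $_L E_*^{p,q}$ of the restriction $\pi_L: \Sl{L} \to L_c$, exactly the same argument applies within the torus $L$ equipped with the complexified toric arrangement $\A^L$, whose poset of layers is $\Cc_{\leq L}$, producing \eqref{eq:decL}. The main obstacle is the gluing step of the second paragraph: establishing that the pointwise Brieskorn decomposition is genuinely compatible with the sheaf's restriction maps, rather than merely being a coincidence of stalks. This is precisely the place where the naturality of Brieskorn encoded combinatorially in Proposition \ref{cor:cbl} is indispensable.
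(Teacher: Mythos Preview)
Your overall strategy is correct and is essentially a mirror image of the paper's proof. The paper constructs, for each $L'\in\Cc_q$, a sheaf map $\epsilon_{L'}:=\mathscr H^q(\pi_{L'};\Z)\to\mathscr H^q(\pi;\Z)$ induced by the geometric inclusion $\Sl{L'}\hookrightarrow\Sal(\A)$; this is automatically a morphism of sheaves, and the paper then checks it yields the Brieskorn isomorphism on each stalk (via Proposition~\ref{cor:cbl}). You instead decompose the stalks first and then verify by hand that the Brieskorn summands are preserved under restriction. Both arguments hinge on the same combinatorial input (Proposition~\ref{cor:cbl}), but the paper's version is cleaner: by building a global morphism from geometry, it gets naturality for free and sidesteps the monodromy check needed to promote a stalkwise splitting to a sheaf splitting. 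Your sketch of that check is correct in outline, though the claim that the inclusion $\pi^{-1}(V)\subseteq\pi^{-1}(U)$ is modelled by a $j_F$-type map deserves a line of justification.

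There is a slip in your treatment of~\eqref{eq:decL}. You invoke the toric arrangement $\A^L$ inside $L$ and assert that its poset of layers is $\Cc_{\leq L}$; this is false, since layers of $\A^L$ are contained in $L$ and hence lie in $\Cc_{\geq L}$, not $\Cc_{\leq L}$. The paper's argument here is actually simpler than a repeat of the first part: by Theorem~\ref{thm:inclusion}, $\pi_L$ is homotopic to the projection $L_c\times\Sal(\Ar{L})\to L_c$, so $\mathscr H^q(\pi_L;\Z)$ is the \emph{constant} sheaf with value $H^q(M(\Ar{L});\Z)$ on $L_c$. Then~\eqref{eq:decL} is just the Brieskorn decomposition of that single group, using $\Ll(\Ar{L})\cong\Cc_{\leq L}$ from Remark~\ref{rem:posetArL}.
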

\begin{proof} We prove our lemma with a straightforward generalization of the argument 
given in \cite[Lemma 3.1]{bibby2013}.

Let $L'$ be a layer in $\Cc_q$. 
The set of subtori of $\A$ that contain $L'$ is 
in bijection with the central arrangement $\Ar{L'}$. 
The projection $\pi_{L'}:\Sl{L'} \to T_c$ defines the sheaf $\epsilon_{L'} := \mathscr H^q (\pi_{L'};\Z )$ on $T_c$. The sheaf $\epsilon_{L'}$ has support in $L'_c$ and the stalk at a point $z \in L'_c$ is $(\epsilon_{L'})_z \simeq H^q(\Sal(\Ar{L'});\Z)$. Moreover, since the complex $\Sl{L'}$ is homotopy equivalent to the product $L'_c \times \Sal(\Ar{L'})$, the sheaf $\epsilon_{L'}$ is constant on $L'_c$.

Then we can consider the sheaf $\epsilon := \bigoplus_{L' \in \Cc_q} \epsilon_{L'}$. The stalk at $z \in T$ is 
$$
\epsilon_z = \bigoplus_{L' \in \Cc_q} (\epsilon_{L'})_z \simeq \bigoplus_{L' \in \Cc_q} H^q(\Sal(\Ar{L'});\Z)
$$
and 
the map
$$
\epsilon_{L'} \to \mathscr H^q (\pi;\Z )
$$
induced by inclusion $L'_c \into T_c$ and hence by the commutative diagram
$$
\xymatrix{
\Sl{L'} \ar[d]^{\pi_{L'}} \ar[r]^{\subset} & \Sal(\A)\ar[d]^{\pi}  \\
L'_c \ar[r]^{\subset} & T_c
}
$$
can be described in terms of Brieskorn Lemma (Proposition
\ref{lem:brieskorn}). In fact 
for every point $z \in L'_c$ we have that the map of stalks
$$(\epsilon_{L'})_z \to \mathscr H^q (\pi;\Z )_z$$ is induced by an inclusion
$\Sal(\Ar{L'}) \into \Sal(\Ar{L_z})$ 

and corresponds to the map 
$$j_F^*: H^q(\Sal(\Ar{L_z});\Z) \to H^q(\Sal(\Ar{L'});\Z)$$ where $j_F$ is the map given in Definition \ref{df:jb}.
From Proposition \ref{cor:cbl} the last map is the left inverse of map
$$b^q: H^q(M(\Ar{L'});\Z) \to H^q(M(\Ar{L_z});\Z)$$ of Definition \ref{def:b}, that is the $\A[L']$-component of the Brieskorn decomposition. Since $(\epsilon)_z$ is given by the direct sum of $(\epsilon_{L'})_z$ over all $L \in \Cc_q$ containing $z$, it follows that
the corresponding map $\epsilon \to \mathscr H^q (\pi;\Z )$ is an isomorphism of sheaves (in fact, this map is the Brieskorn isomorphism).

The first part of the lemma is now straightforward, since we have that
\begin{eqnarray*}
\widehat E_2^{p,q} & = & H^p(T_c; \mathscr H^q (\pi;\Z )) = \\
            & = & H^p(T_c; \epsilon) = \\
            & = & \bigoplus_{L' \in \Cc_q} H^p(T_c; \epsilon_{L'}) = \\
            & = & \bigoplus_{L' \in \Cc_q} H^p(L_c'; \Z) \otimes H^q(M(\Ar{L'});\Z).   
\end{eqnarray*}
The second part of the lemma follows since the subcomplex $\Sl{L}$ 
is homotopy equivalent to the product $L_c \times \Sal(\Ar{L}) $ and the map $\pi_L$ is 
homotopically equivalent to the 
projection on the first factor (see Theorem \ref{thm:inclusion}). 
Hence the sheaf  $\mathscr H^q (\pi_L;\Z )$ is the constant
sheaf $H^*(M(\Ar{L});\Z)$ and the decomposition given in \eqref{eq:decL} follows from the decomposition given by the Brieskorn Lemma applied  to $H^*(M(\Ar{L});\Z)$.
\end{proof}

\begin{thm}\label{thm:collapses}
The spectral sequences $\widehat E_*^{p,q}$ and $_LE_*^{p,q}$ collapse at the second page.
\end{thm}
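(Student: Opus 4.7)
The plan is to treat the two spectral sequences separately: the local sequences $_LE_*^{p,q}$ collapse by a direct argument, while the global $\widehat E_*^{p,q}$ collapses by comparison with the (already collapsed) local ones.

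First, for each layer $L$, the map $\pi_L : \Sc_L \to T_c$ factors up to homotopy through $L_c$: by Theorem \ref{thm:inclusion}, $\Sc_L \xrightarrow{\Theta_L} L_c \times \Sal(\Ar{L}) \xrightarrow{\mathrm{pr}_1} L_c \hookrightarrow T_c$. Since the inclusion $i_L : L_c \hookrightarrow T_c$ is closed, $(i_L)_*$ is exact and the Leray spectral sequence of $\pi_L$ on $T_c$ coincides with that of the corestriction $\pi_L^\circ : \Sc_L \to L_c$. The latter is a homotopically trivial fibration with constant coefficient sheaf $H^q(M(\Ar{L}))$, so its Leray spectral sequence reduces to the K\"unneth formula and collapses at $E_2$.

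For the global sequence, I would use the map of Leray spectral sequences $\Phi^*_r : \widehat E_r^{p,q} \to \bigoplus_L {}_LE_r^{p,q}$ induced by $\Phi = \bigsqcup_L \varphi_L : \bigsqcup_L \Sc_L \to \Sal(\A)$. The target differentials vanish by the previous paragraph. Hence if $\Phi^*_2$ is injective, then for any $x \in \widehat E_2^{p,q}$ the relation $\Phi^*_2(d_2 x) = d_2^{\mathrm{tgt}}(\Phi^*_2 x) = 0$ forces $d_2 x = 0$; this gives $\widehat E_3 = \widehat E_2$ and the same argument applies inductively at all higher pages, yielding collapse at $E_2$.

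Injectivity of $\Phi^*_2$ should follow from unwinding Lemma \ref{lem:decompss}. The summand of $\widehat E_2^{p,q}$ indexed by $L' \in \Cc_q$ is $H^p(L'_c) \otimes H^q(M(\Ar{L'}))$, while the summand of $_LE_2^{p,q}$ indexed by $L'' \in (\Cc_{\leq L})_q$ is $H^p(L_c) \otimes H^q(M(\Ar{L''}))$. For each $L' \in \Cc_q$, taking $L = L'$, the component of $\Phi^*_2$ sending the $L'$-summand of $\widehat E_2^{p,q}$ to the $L'$-summand of $_{L'}E_2^{p,q}$ should be the identity map. This exhibits a diagonal direct summand of $\Phi^*_2$ which is the identity on each source summand, so $\Phi^*_2$ is injective.

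The main obstacle is this last verification: checking that the sheaf map $\mathscr H^q(\pi) \to \mathscr H^q(\pi_L)$ induced by $\varphi_L$ respects the Brieskorn decomposition at each stalk and that the diagonal $L = L'$ component is truly the identity. I expect this to follow by retracing the proof of Lemma \ref{lem:decompss} (whose sheaf isomorphism is built stalk-wise from Brieskorn's Lemma), together with the Combinatorial Brieskorn Lemma (Proposition \ref{cor:cbl}) and Theorem \ref{lem:coherent}, which together encode the precise compatibility between the inclusions $\varphi_L$ and the Brieskorn decompositions on cohomology.
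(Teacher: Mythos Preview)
Your argument for $_LE_*^{p,q}$ is essentially the paper's. For $\widehat E_*^{p,q}$, however, you take a genuinely different route. The paper proves collapse by a rank count: using the Poincar\'e polynomial of $H^*(M(\A);\C)$ computed by De Concini--Procesi (and Looijenga), one checks that the total rank of $\widehat E_2$ already matches that of the abutment, so no nonzero differential can occur. Your approach instead uses the comparison map $\Phi^*_2$ to the direct sum of the local sequences and argues that its injectivity forces all differentials to vanish.

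Your injectivity claim is correct, and the verification you flag as ``the main obstacle'' is indeed routine: for $L'\in\Cc_q$, the only rank-$q$ layer in $\Cc_{\leq L'}$ is $L'$ itself, so $_{L'}E_2^{p,q}=H^p(L')\otimes H^q(M(\Ar{L'}))$; the stalk identification in Lemma~\ref{lem:decompss} is built precisely so that the $L'$-component of $\overline\varphi_{L'}^*$ is $j_F^*\circ b^q=\id$ (Proposition~\ref{cor:cbl}), while other $L''\in\Cc_q$ contribute zero since $L''\not\leq L'$. Projecting to the summands with $L\in\Cc_q$ thus gives a left inverse to $\Phi^*_2$. (This is effectively the content of Corollary~\ref{cor:decompos_ss}, whose proof is independent of collapse; you do not need Theorem~\ref{lem:coherent} here.) The trade-off: the paper's argument is shorter once one imports the Poincar\'e polynomial, while yours is self-contained within the machinery already set up and avoids invoking that external computation.
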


\begin{proof}
We can prove the collapsing of $\widehat E_2^{p,q}$ by means of a counting argument.
We assume the arrangement $\A$ to be ordered and we define a no broken circuit in $\Cc_{\leq L}$ via the natural poset-isomorphism with $\Ar L$ (see Remark \ref{rem:posetArL}). 
According to De Concini-Procesi \cite{dp2005} (see also Looijenga \cite{looi93})
the Poincar\'e polynomial $P_{\A}(t)$ of the cohomology $H^*(M(\A);\C)$ of a toric arrangement $\A$ in a complex torus $T$ of dimension $d$ is given by
$$
P_{\A}(t) = \sum_{j=0}^{\infty}  |\Nc_j| (1+t)^{d-j}t^j
$$
where we define $$\Nc_j:= \{ (L,N) \in \Cc_j \times \mathscr{P}(\A) \mid N \mbox{ is a no broken circuit set of } \Cc_{\leq L} , |N| =j\}$$
(we refer to \cite[Definition 3.6]{dp2005}) for a definition of \emph{no broken circuit set}).

We compare the Poincar\'e polynomial above with the rank of the term $\widehat E_2^{p,q}$. For a fixed $q$
we have that $\oplus_p \widehat E_2^{p,q}$ is a free $\Z$-module with Poincar\'e polynomial given by
$|\Nc_q|(1+t)^{d-q}.$ Hence the total rank of $\widehat E_2^{p,q}$ is computed by its Poincar\'e polynomial $$\sum_{q \geq 0} |\Nc_q|(1+t)^{d-q}t^q$$ and the spectral sequence must collapse at the page $\widehat E_2.$

The collapsing for $_LE_2^{p,q}$ is straightforward since the projection $\pi_L: \Sl{L} \to T_c$ maps onto the compact subtorus $L_c$ and (again, applying Theorem \ref{thm:inclusion}) the subcomplex $\Sl{L}$ factors as $L_c \times \Sal(\Ar L)$ where $\pi_L$ is the projection on the first factor. Hence the spectral sequence trivially collapses at the second page, since the two factors have torsion-free integer cohomology.
\end{proof}

We will see in Section \ref{sec:combiforse} that a similar argument shows that the Leray spectral sequence induced by the inclusion $M(\A) \in T$ always collapses at the second page.

\begin{rem}
It has already been noticed in \cite{bibby2013} that the analogous spectral sequence over the rationals collapses at the third page in the more general case of smooth connected divisors intersecting like hyperplanes in a smooth complex projective variety.
\end{rem}

\begin{thm}\label{thm:ssmap}
The inclusion $\varphi_L: \Sl{L} \into \Sal(\A)$ induces a natural morphism of spectral sequences
$$
\widehat E_*^{p,q} \stackrel{\overline{\varphi}^*_L}{\longrightarrow}  {}_LE_*^{p,q}. 
$$
The map $\overline{\varphi}_L^*: \widehat E_2^{p,*} \to   {}_LE_2^{p,*}$
is the natural map 
$$
H^p(T;\mathscr H^* (\pi;\Z )) \to H^p(T;\mathscr H^* (\pi_L;\Z ))
$$
induced by the morphism of $\Z$-algebras
$$
H^*(\pi^{-1}(U);\Z) \to H^*(\pi_L^{-1}(U);\Z)
$$
given by the inclusion
$$
\pi_L^{-1}(U) \into \pi^{-1}(U).
$$
\end{thm}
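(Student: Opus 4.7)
The plan is to derive the morphism of spectral sequences from the naturality of the Leray spectral sequence applied to a commutative square relating the two projections. By the very construction of $\pi_L$ as the restriction of $\pi$ to $\Sl{L}$, we have $\pi\circ\varphi_L=\pi_L$, that is the commutative square
$$
\begin{CD}
\Sl{L} @>{\varphi_L}>> \Sal(\A) \\
@V{\pi_L}VV @VV{\pi}V \\
T_c @= T_c
\end{CD}
$$
Such a square induces, by the standard naturality of the Leray spectral sequence associated to a continuous map, a morphism of spectral sequences $\overline{\varphi}_L^{*}:\widehat{E}_*^{p,q}\to{}_LE_*^{p,q}$ converging to the map $\varphi_L^{*}:H^{*}(\Sal(\A);\Z)\to H^{*}(\Sl{L};\Z)$.

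The second step is to identify the morphism induced on the second page. Recall that $\widehat{E}_2^{p,q}=H^p(T_c;\mathscr H^q(\pi;\Z))$, where $\mathscr H^q(\pi;\Z)$ is the sheafification of the presheaf $U\mapsto H^q(\pi^{-1}(U);\Z)$, and similarly for ${}_LE_2^{p,q}$. For every open set $U\subseteq T_c$ the equality $\pi_L=\pi\circ\varphi_L$ gives $\pi_L^{-1}(U)=\varphi_L^{-1}(\pi^{-1}(U))\subseteq\pi^{-1}(U)$, and thus a restriction homomorphism of $\Z$-algebras $H^{*}(\pi^{-1}(U);\Z)\to H^{*}(\pi_L^{-1}(U);\Z)$. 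These maps are evidently compatible with restrictions to smaller opens, so they assemble into a morphism of presheaves, and after sheafification into a morphism of sheaves $\mathscr H^q(\pi;\Z)\to\mathscr H^q(\pi_L;\Z)$. Taking sheaf cohomology on $T_c$ yields exactly the natural map $H^p(T_c;\mathscr H^q(\pi;\Z))\to H^p(T_c;\mathscr H^q(\pi_L;\Z))$ of the statement.

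The remaining check is that this is indeed the map on $E_2$-pages produced by the general naturality construction. This follows from the standard fact that the Leray spectral sequence of $\pi$ is obtained by applying sheaf cohomology on $T_c$ to the Cartan--Eilenberg/Godement resolution of $R\pi_{*}\Z$, and that a map of spaces over $T_c$ induces a map of higher direct images compatible with these resolutions; identifying $R^q\pi_{*}\Z$ with $\mathscr H^q(\pi;\Z)$ (as done already in the preceding lemmas, which implicitly use this equivalence for the two fibrations at hand) matches the induced map on $E_2$ with the sheaf-theoretic restriction described above.

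No real obstacle is expected: the argument is essentially a formal consequence of naturality of the Leray spectral sequence under commutative squares of the form $(\varphi_L,\id_{T_c})$, together with the explicit identification of the higher direct image sheaves already available from the proof of Lemma \ref{lem:decompss}. The only mild care needed is to verify that the presheaf-level restriction maps are compatible with sheafification, which is automatic from the universal property of sheafification.
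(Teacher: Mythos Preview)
Your proposal is correct and is precisely the standard naturality argument the paper is invoking when it writes ``This is obvious from the definition (or following Leray's argument in \cite{leray46}, see also \cite{segal68}).'' You have simply spelled out the details that the authors leave implicit; there is no difference in approach.
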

This is obvious from the definition (or following Leray's 
argument in \cite{leray46}, see also \cite{segal68}).

\begin{cor}\label{cor:decompos_ss}
The morphism $$\overline{\varphi}_L^*: 
\bigoplus_{L' \in \Cc_q} 
H^p(L';\Z)\otimes H^q(M(\Ar{L'});\Z) \to 
\bigoplus_{L'' \in (\Cc_{\leq L})_q}  
H^p(L;\Z)\otimes H^q(M(\Ar{L''});\Z)
$$
decomposes in maps between the directs summands as follows. For every $L' \in \Cc_q, L'' \in (\Cc_{\leq L})_q$ we have a map:

\begin{align*}
  H^p(L';\Z)\otimes H^q(M(\Ar{L'});\Z) &\to H^p(L;\Z)\otimes H^q(M(\Ar{L''});\Z) \\
\omega \otimes \lambda &\mapsto \left\{  
\begin{array}{ll}
i^*(\omega) \otimes \lambda& \mbox{ if } L' = L'',\\
0 & \mbox{ otherwise.} 
\end{array}
\right.
\end{align*}
In particular $\overline{\varphi}^*_L$ is a surjective map.
\end{cor}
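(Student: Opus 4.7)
\smallskip

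\noindent\textbf{Proof plan.} The strategy is to read off the decomposition of $\overline{\varphi}_L^*$ from the sheaf-level description provided by Theorem \ref{thm:ssmap} and the sheaf decompositions established in Lemma \ref{lem:decompss}. First I would write both sheaves as direct sums
\[
\mathscr H^q(\pi;\Z)\;=\;\bigoplus_{L'\in\Cc_q}\,[H^q(M(\Ar{L'});\Z)]_{L'_c},\qquad
\mathscr H^q(\pi_L;\Z)\;=\;\bigoplus_{L''\in(\Cc_{\leq L})_q}\,[H^q(M(\Ar{L''});\Z)]_{L_c},
\]
the right-hand decomposition being the Brieskorn splitting of the constant sheaf $H^q(M(\Ar{L});\Z)$ on $L_c$. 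Since the sheaf map in Theorem \ref{thm:ssmap} is induced by the inclusion $\pi_L^{-1}(U)\hookrightarrow\pi^{-1}(U)$, everything reduces to understanding the induced map on stalks at points $z\in L_c$.

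Second, I would identify these stalks via the natural homotopy equivalences
$\pi^{-1}(U)\simeq\Sal(\Ar{L_z})$ and $\pi_L^{-1}(U)\simeq\Sal(\Ar{L})$, so that the stalk map becomes the map in cohomology induced by a Salvetti-complex inclusion modelling $M(\Ar{L})\hookrightarrow M(\Ar{L_z})$. Here $L_z\subseteq L$, and the point is that Proposition \ref{cor:cbl} (the combinatorial Brieskorn lemma) identifies this inclusion, on the $q$-th Brieskorn summands, with the obvious projection
\[
\bigoplus_{L'\in(\Cc_{\leq L_z})_q} H^q(M(\Ar{L'});\Z)\;\longrightarrow\;\bigoplus_{L''\in(\Cc_{\leq L})_q} H^q(M(\Ar{L''});\Z),
\]
namely the identity on summands indexed by $L'\in(\Cc_{\leq L})_q\subseteq(\Cc_{\leq L_z})_q$ and zero on the remaining summands. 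Together with the observation that the summand $[H^q(M(\Ar{L'});\Z)]_{L'_c}$ has nontrivial stalks at $z$ only when $L_z\subseteq L'$, this shows that the sheaf map decomposes into a direct sum of maps $\epsilon_{L'}\to\eta_{L''}$ that vanish unless $L'=L''$ (in particular $L\subseteq L'$), in which case it is the restriction of the constant sheaf on $L'_c$ to its restriction on $L_c\subseteq L'_c$, tensored with the identity on $H^q(M(\Ar{L'});\Z)$.

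Third, I would take $H^p(T_c;-)$ on both sides. On the summand $L'=L''$ the map becomes
\[
H^p(L'_c;\Z)\otimes H^q(M(\Ar{L'});\Z)\;\xrightarrow{i^*\otimes\id}\;H^p(L_c;\Z)\otimes H^q(M(\Ar{L''});\Z),
\]
with $i^*$ the map induced by the inclusion $i\colon L_c\hookrightarrow L'_c$, and all off-diagonal entries vanish, which is precisely the stated formula. Finally, for surjectivity it suffices to note that for every $L''\in(\Cc_{\leq L})_q$ the inclusion $i\colon L_c\hookrightarrow L''_c$ is an inclusion of connected compact tori, hence $i^*\colon H^*(L''_c;\Z)\to H^*(L_c;\Z)$ is surjective, so the diagonal block is a surjection onto every summand on the right.

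\smallskip

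\noindent\emph{Main obstacle.} The only delicate step is the stalk-level identification of the sheaf map with Brieskorn's projection: one must check that the combinatorial inclusion realising $\varphi_L$ locally around $z\in L_c$ corresponds, under the identifications of \eqref{eq:iso}, to one of the maps $j_{F(X)}$ of Proposition \ref{cor:cbl}, so that the Combinatorial Brieskorn Lemma applies verbatim. This is essentially bookkeeping at the level of sign vectors, but it is the point where all the preparations of Section \ref{sec:tsc} are actually used.
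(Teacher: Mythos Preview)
Your proposal is correct and follows essentially the same route as the paper: reduce $\overline{\varphi}_L^*$ to the induced map of sheaves via Theorem~\ref{thm:ssmap}, use the direct-sum decompositions of Lemma~\ref{lem:decompss} on both sides, identify the diagonal components as restrictions of a constant sheaf from $L'_c$ to $L_c$ (hence $i^*\otimes\id$), and deduce surjectivity from the surjectivity of $i^*$ for inclusions of compact subtori. The paper's own proof is terser and simply cites the sheaf decompositions already established in Lemma~\ref{lem:decompss}, whereas you spell out the stalk-level Brieskorn check again; what you flag as the ``main obstacle'' is precisely the content already absorbed into the proof of Lemma~\ref{lem:decompss} (via Proposition~\ref{cor:cbl}), so no additional work is actually needed here.
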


\begin{proof}
According to Theorem \ref{thm:ssmap} we have the commutative square
$$
\xymatrix{\underset{L' \in \Cc_q}{\bigoplus}
H^p(L';\Z)\otimes H^q(M(\Ar{L'});\Z) \ar[d]^{\simeq} \ar[r]^{\overline{\varphi}_L^*} &
\underset{L'' \in (\Cc_{\leq L})_q}{\bigoplus}  
H^p(L;\Z)\otimes H^q(M(\Ar{L''});\Z) \ar[d]^{\simeq}\\ 
H^p(T_c; \underset{L' \in \Cc_q}{\bigoplus} [H^q(M(\Ar{L'});\Z)]_{L'}) \ar[r] & H^p(T_c;
\underset{L'' \in (\Cc_{\leq L})_q}{\bigoplus}  [H^q(M(\Ar{L''});\Z)]_{L})&
}
$$
where the vertical maps are isomorphisms 
and the map in the bottom row is induced by the natural map of
sheaves. The top arrow is simply inclusion in the corresponding direct
summands. More precisely, the top arrow splits in a direct sum of maps, a null map for every $L' \not\leq L$ and the natural restriction map 
\begin{equation}\label{eq:restrictions}
H^p(T_c; [H^q(M(\Ar{L''});\Z)]_{L''}) \to 
H^p(T_c; [H^q(M(\Ar{L''});\Z)]_{L}),
\end{equation}
for every $L' = L'' \in (\Cc_{\leq L})_q$. The two sheaves that appear in \eqref{eq:restrictions} are two
restrictions of the same constant sheaf, thus the map is the projection
$$
H^p(L'') \otimes H^q(M(\Ar{L''});\Z)) \stackrel{i^* \otimes \mbox{id}}{\longrightarrow} 
H^p(L)\otimes H^q(M(\Ar{L''});\Z)
$$
given by the tensor product of the projection on the first factor induced by the inclusion $i: L\into L''$ and the identity on the second factor.

The surjectivity of $\overline{\varphi}^*_L$ follows from the surjectivity of the cohomology homomorphism $i^*$ induced by the inclusion $i:L \into L'$ for $L' \leq L$.
\end{proof}

\begin{cor} \label{cor:decomp1}With respect to the decomposition given in Equation \eqref{eq:dec1} of Lemma \ref{lem:decompss}, for a fixed $L \in \Cc$ the map $\overline{\varphi}_L^*: \widehat{E}_2^{p,q}\to {}_LE_2^{p,q}$
restricts on every $L'$-summand of $\widehat{E}_2^{p,q}$ as follows
$$
H^p(L') \otimes H^q(M(\Ar{L'});\Z)) \to 
H^p(L; H^q(M(\Ar{L});\Z)) = {}_LE_2^{p,q}
$$

$$
\omega \otimes \lambda \mapsto \left\{  
\begin{array}{ll}
i^*(\omega) \otimes b(\lambda) & \mbox{ if } L' \leq L,\\
0 & \mbox{ otherwise.} 
\end{array}
\right.
$$
\end{cor}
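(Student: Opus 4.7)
The plan is to deduce Corollary \ref{cor:decomp1} directly from Corollary \ref{cor:decompos_ss} by re-packaging the target through the Brieskorn isomorphism. Corollary \ref{cor:decompos_ss} already gives an explicit description of $\overline{\varphi}_L^*$ as a sum of maps between the direct summands of $\widehat E_2^{p,q}$ and of ${}_LE_2^{p,q}$ written in their decomposed form \eqref{eq:decL}; what remains is to identify the target with the single tensor product $H^p(L)\otimes H^q(M(\Ar{L});\Z)$ using the Brieskorn decomposition.

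First I would recall that by Corollary \ref{cor:decompos_ss}, on the $L'$-summand $H^p(L')\otimes H^q(M(\Ar{L'});\Z)$ of $\widehat E_2^{p,q}$ the map $\overline{\varphi}_L^*$ is zero when $L'\not\leq L$, while for $L'\leq L$ (so that $L'$ appears as some $L''\in (\Cc_{\leq L})_q$) it acts as $i^*\otimes\id$ into the $L'$-summand $H^p(L)\otimes H^q(M(\Ar{L'});\Z)$ of ${}_LE_2^{p,q}$.

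Next I would invoke the Brieskorn Lemma (Lemma \ref{lem:brieskorn}) to obtain the isomorphism
\[
\bigoplus_{L''\in(\Cc_{\leq L})_q} H^q(M(\Ar{L''});\Z) \xrightarrow{\,\bigoplus b\,} H^q(M(\Ar{L});\Z),
\]
whose $L''$-summand is precisely the map $b=b^q$ of Definition \ref{def:b}. Tensoring with $H^p(L)$ identifies ${}_LE_2^{p,q}$ with $H^p(L;H^q(M(\Ar{L});\Z))$, and under this identification an element $\omega\otimes\lambda$ sitting in the $L''$-summand of ${}_LE_2^{p,q}$ corresponds to $\omega\otimes b(\lambda)$. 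Combining this with the first step, the element $\omega\otimes\lambda$ of the $L'$-summand of $\widehat E_2^{p,q}$ (for $L'\leq L$) is sent to $i^*(\omega)\otimes\lambda$ in the $L'$-summand of ${}_LE_2^{p,q}$, which corresponds to $i^*(\omega)\otimes b(\lambda)$ in $H^p(L)\otimes H^q(M(\Ar{L});\Z)$, as claimed.

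The only point requiring care is to check that the sheaf isomorphism $\epsilon\simeq \mathscr H^q(\pi;\Z)$ built in the proof of Lemma \ref{lem:decompss} is compatible with the Brieskorn decomposition: this however was already the content of that proof, where the map of stalks was explicitly recognized as the left inverse of the Brieskorn map via Proposition \ref{cor:cbl}. Consequently the combinatorial Brieskorn description used to decompose $\mathscr H^q(\pi;\Z)$ matches the one used to identify $H^q(M(\Ar{L});\Z)$ as a direct sum, and no further obstacle arises; the statement follows by a straightforward diagram chase.
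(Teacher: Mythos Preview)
Your argument is correct and follows exactly the paper's approach: the paper's proof is the single sentence ``The statement follows from Corollary \ref{cor:decompos_ss}, applying the Brieskorn decomposition to Equation \eqref{eq:decL} of Lemma \ref{lem:decompss},'' which is precisely what you unpack. Your extra remark about the compatibility of the sheaf isomorphism with the Brieskorn map is a welcome clarification but does not depart from the paper's line of reasoning.
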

\begin{proof}
The statement follows from Corollary \ref{cor:decompos_ss}, applying the Brieskorn decomposition to Equation \eqref{eq:decL} of Lemma \ref{lem:decompss}.\todo{troppo poco?}
\end{proof}
\begin{rem} \label{rmk:decomp2} Recall the definition of the map $\Phi:\bigsqcup_{L \in \C} \Sc_L \to \Sal(\A)$ given at the beginning of Section \ref{sec:main}. The morphism of spectral sequences, and of algebras,
$$
\overline{\Phi}^*: \widehat E_*^{p,q} \longrightarrow \bigoplus_{L \in \Cc} {}_LE_*^{p,q}
$$
induced by the map $\Phi$ is the direct sum $\oplus_{L \in \Cc} \overline{\varphi}_L^*$.
\end{rem}

\subsection{Algebras} \label{ss:algebras} 
Recall Definition \ref{def:algebra_a} of the algebra $\Aa(\A)$.
In this section we define the algebra $\Ab(\A)$ and we prove that the algebras $\Aa(\A)$ and $\Ab(\A)$ are isomorphic. We will see that two algebras turn our to be both isomorphic to the integer cohomology ring of the complement $M(\A)$ of a toric arrangement. In the case of a real complexified arrangement this is proved in Section \ref{ss:proof_main}. \todo{ho pensato di non mettere l'os\-ser\-va\-zio\-ne seguente come Remark: mi sembra troppo corta e mi suona male un remark ``preventivo''.}Actually the abstract construction of the two algebras and of their isomorphism holds not only for real complexified toric arrangements, but also for the general case, thus the following results will be useful also in Section \ref{sec:general}, the proof of Theorem \ref{thm:final_part_A}.

We begin with some definitions.

\begin{df}\label{df:map_p}
The map of $\Z$-modules $p:\Aa(\A) \to \bigoplus_{L \in \Cc} \Lc{L}$ is the map defined on generators by:
$$
\Lc{L}^{\rk(L)} \ni \alpha \mapsto p(\alpha)$$
where
$$p(\alpha)^q_{L'} = \left\{ \begin{array}{ll}
\Lc{L \leq L'}(\alpha) &\mbox{ if }L \leq L', \\
0 & \mbox{ otherwise.}
\end{array}\right.
$$
\end{df}

\begin{rem}
The map $p$ is a section of the natural projection $\pi: \bigoplus_{L \in \Cc} \Lc{L} \to \Aa(\A)$.
\end{rem}

\begin{prop}\label{prop:iso_mod}
The map $p:\Aa(\A) \to \bigoplus_{L \in \Cc} \Lc{L}$ is injective and its image is the submodule of coherent elements (introduced in Definition \ref{def:coherent}).
\end{prop}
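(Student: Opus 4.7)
The plan is to verify three statements in sequence: injectivity of $p$, that $p(\alpha)$ is always coherent, and that every coherent element is hit by $p$. The common engine behind all three is a small observation about ranks in $\Cc$: if $L_1 \leq L_2$ and $\rk(L_1) = \rk(L_2)$ then $L_1 = L_2$, because $L_2 \subseteq L_1$ and the two layers are connected complex submanifolds of equal dimension.

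For injectivity, suppose $\alpha = \sum_{L} \alpha_L \in \Aa(\A)$ with $\alpha_L \in \Lc{L}^{\rk(L)}$ is sent to zero. For each $L' \in \Cc$, inspecting Definition \ref{df:map_p} I would note that the component $p(\alpha)_{L'}^{\rk(L')}$ only collects summands $\alpha_L$ with $L \leq L'$ and $\rk(L) = \rk(L')$. By the observation above these force $L = L'$, so $p(\alpha)_{L'}^{\rk(L')} = \Lc{L' \leq L'}(\alpha_{L'}) = \alpha_{L'}$, and $p(\alpha) = 0$ forces every $\alpha_{L'}$ to vanish.

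For containment of the image in coherent elements, by $\Z$-linearity it is enough to fix a homogeneous $\alpha \in \Lc{L_0}^{\rk(L_0)}$ and check the relation of Definition \ref{def:coherent} at an arbitrary $q$ and $L \in \Cc_{>q}$. If $q \neq \rk(L_0)$ both sides vanish. If $q = \rk(L_0)$, the sum $\sum_{L' \in (\Cc_{\leq L})_q} \Lc{L' \leq L}(p(\alpha)_{L'}^q)$ can only receive a contribution from $L'$ satisfying simultaneously $L' \in (\Cc_{\leq L})_q$ (so $L' \leq L$ and $\rk(L')=q$) and $L_0 \leq L'$ (so that $p(\alpha)_{L'}^q \neq 0$); again by the rank observation this forces $L' = L_0$, and this contribution exists precisely when $L_0 \leq L$. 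In that case the sum equals $\Lc{L_0 \leq L}(\alpha)$, which is exactly $p(\alpha)_L^q$; if $L_0 \not\leq L$ both sides are zero. So $p(\alpha)$ is coherent.

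For the converse, given a coherent $\beta \in \bigoplus_L \Lc{L}$ I would set $\alpha := \sum_L \beta_L^{\rk(L)} \in \Aa(\A)$ and compare $p(\alpha)_{L'}^q$ with $\beta_{L'}^q$ in three regimes. When $q > \rk(L')$, the set $(\Cc_{\leq L'})_q$ is empty because $L \leq L'$ implies $\rk(L)\leq \rk(L')$; on the other hand $H^q(M(\Ar{L'});\Z)=0$ since the hyperplane arrangement $\Ar{L'}$ has rank $\rk(L')$, so $\beta_{L'}^q=0$ as well. When $q = \rk(L')$, only $L=L'$ contributes and $p(\alpha)_{L'}^q = \beta_{L'}^q$. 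When $q < \rk(L')$, the layer $L'$ lies in $\Cc_{>q}$ and coherence of $\beta$ gives exactly
\[
p(\alpha)_{L'}^q \;=\; \sum_{L \in (\Cc_{\leq L'})_q} \Lc{L\leq L'}(\beta_L^q) \;=\; \beta_{L'}^q.
\]
Hence $p(\alpha)=\beta$.

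The only potential pitfall is ensuring that the diagram maps $\Lc{L\leq L'}$ are used in the right direction and preserve the grading induced by $H^{*}(M(\Ar{L});\Z)$, and that the vanishing $H^q(M(\Ar{L'});\Z)=0$ for $q>\rk(L')$ is invoked correctly in the ``off-support'' case; once those points are pinned down, the argument is purely bookkeeping with the rank observation above.
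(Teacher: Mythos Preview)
Your proof is correct and follows essentially the same approach as the paper, which proves injectivity by noting that the projection $\pi:\bigoplus_L\Lc{L}\to\Aa(\A)$ is a left inverse of $p$, and proves surjectivity by observing that a coherent element is determined by its image under $\pi$. Your version is simply a fully unpacked rendition of that terse argument: your ``rank observation'' is exactly what makes $\pi\circ p=\id$ work componentwise, and your three-regime case analysis for surjectivity makes explicit the step the paper leaves implicit, including the vanishing $H^q(M(\Ar{L'});\Z)=0$ for $q>\rk(L')$ in the off-support case.
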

\begin{proof}
It is clear that the image of $p$ is given by coherent elements since the images of all the generators are coherent.
The map $p$ is clearly injective since the projection of $\pi: \bigoplus_{L \in \Cc} \Lc{L}  \to \Aa(\A)$ is a left inverse for $p$.
Finally, since it follows from Definition \ref{def:coherent} that every coherent element $\alpha$ is determined by its projection on $\Aa(\A)$, the map $p$ is surjective on the submodule of coherent elements. 
\end{proof}

We need to introduce a special product on the sum of algebras $\oplus_{L \in \Cc} \Lc{L}$.
\begin{df}\label{def:cdot}
The product $\odot$ on the direct sum $\oplus_{L \in \Cc} \Lc{L}$ is given on generators as follows.
Let $\alpha \in \Lc{L}^q$ and $\alpha' \in \Lc{L'}^{q'}$:$$
(\alpha \odot \alpha')_{L''} := \left\{
\begin{array}{ll}
\Lc{L \leq L''}(\alpha) \cUp \Lc{L' \leq L''}(\alpha') & \mbox{ if }\rk (L) = q, \rk (L') = q' \mbox{ and } L \cap L' \leq L'',\\
0 & \mbox{ otherwise.} 
\end{array}
\right.
$$
\end{df}

\begin{rem} 
Notice that the product $\odot$ restricts on the subgroup $\Aa(\A)$ to the product $\pra$ introduced in Definition \ref{def:algebra_a}.
\end{rem}

\begin{lem}\label{lem:products}
The restriction of the product $\odot$, defined on the sum $\oplus_{L \in \Cc} \Lc{L}$, to the submodule of coherent elements equals the restriction of the natural product given on $\oplus_{L \in \Cc} \Lc{L}$ viewed as a direct sum of cohomology rings.
\end{lem}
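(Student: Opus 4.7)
The strategy is to exploit bilinearity to reduce the identity to a single pair of generators of the submodule of coherent elements. By Proposition \ref{prop:iso_mod}, every coherent element can be written as a $\Z$-linear combination of images $p(\alpha)$ with $\alpha \in \Lc{L}^{\rk L}$ for varying $L \in \Cc$. Since both $\odot$ and the componentwise cup product are $\Z$-bilinear, it suffices to verify the identity on an arbitrary pair of such generators $p(\alpha)$, $p(\alpha')$ with $\alpha \in \Lc{L}^{\rk L}$ and $\alpha' \in \Lc{L'}^{\rk L'}$.

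For the $\odot$-side, I would expand both factors via Definition \ref{df:map_p}, obtaining a double sum
$p(\alpha) \odot p(\alpha') = \sum_{M \geq L,\, M' \geq L'} \Lc{L \leq M}(\alpha) \odot \Lc{L' \leq M'}(\alpha')$.
Each summand lies in $\Lc{M}^{\rk L}$ tensored with $\Lc{M'}^{\rk L'}$, and Definition \ref{def:cdot} declares it to vanish unless $\rk M = \rk L$ and $\rk M' = \rk L'$. Combined with $L \leq M$ and $L' \leq M'$, these degree conditions force $M=L$ and $M'=L'$. Thus the bilinear expansion collapses to $p(\alpha) \odot p(\alpha') = \alpha \odot \alpha'$, whose $L''$-component is $\Lc{L \leq L''}(\alpha) \cup \Lc{L' \leq L''}(\alpha')$ when $L \cap L' \leq L''$, and zero otherwise.

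On the other side, Definition \ref{df:map_p} immediately yields $\bigl(p(\alpha) \cdot p(\alpha')\bigr)_{L''} = p(\alpha)_{L''} \cup p(\alpha')_{L''}$, which equals $\Lc{L \leq L''}(\alpha) \cup \Lc{L' \leq L''}(\alpha')$ whenever both $L \leq L''$ and $L' \leq L''$ hold, and vanishes otherwise. Since $\Cc$ is partially ordered by reverse inclusion, the condition $L \cap L' \leq L''$ (i.e.\ $L'' \subseteq L \cap L'$ as subspaces of $T$) is equivalent to the conjunction $L \leq L''$ and $L' \leq L''$; this matches the two support conditions and produces the same value for the nontrivial case. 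The lemma then follows by bilinearity.

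The main delicacy will be the bookkeeping of the degree condition in Definition \ref{def:cdot}: this is precisely what forces the double bilinear expansion of $\odot$ to collapse to a single nonzero summand. Without this restriction one would obtain extra cross-terms from pairs $(M,M')$ with $M>L$ or $M'>L'$ that have no counterpart in the componentwise cup product, and the desired identity would fail outside the coherent submodule.
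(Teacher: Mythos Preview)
Your proposal is correct and follows the same approach as the paper, which simply says ``since all coherent elements are in the image of $p$, we can easily check the lemma on generators of the form $p(\alpha)$.'' You have supplied precisely the details the paper omits: the collapse of the bilinear expansion via the rank condition in Definition~\ref{def:cdot}, and the matching of the support condition $L\cap L'\leq L''$ with the conjunction $L\leq L''$ and $L'\leq L''$.
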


\begin{proof}
Since all coherent elements are in the image of $p$, we can easily check the lemma on generators of the form $p(\alpha)$ for $\alpha \in \Lc{L'}^q$.
\end{proof}

The following proposition allows us to define the algebra of coherent elements.

\begin{prop}\label{prop:colim}
The product $\odot$ maps two coherent elements $\alpha \in \bigoplus_{L \in \Cc} \Lc{L}^q$
and $\alpha' \in \bigoplus_{L \in \Cc} \Lc{L}^{q'}$ to a coherent element 
in $\bigoplus_{L \in \Cc_{\geq (q+q')}} \Lc{L}^{q+q'}$. 
\end{prop}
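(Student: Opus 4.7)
The strategy is to reduce coherence of $\alpha \odot \alpha'$ to an identity among generators by directly unfolding the product formula, and then to verify the resulting identity via the geometric lattice structure of $\Cc_{\leq L}$ and the Brieskorn decomposition of $H^*(M(\Ar L))$.

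By Proposition \ref{prop:iso_mod}, a coherent element of pure degree $q$ has the form $\alpha = \sum_{K \in \Cc_q} p(\alpha_K)$ with components $\alpha_L = \sum_{K \in (\Cc_{\leq L})_q}\Lc{K\leq L}(\alpha_K)$ prescribed by coherence. Expanding $\alpha \odot \alpha'$ bilinearly via Definition \ref{def:cdot}, and noting that the ``$\rk(L) = q$'' condition in the formula selects only the components $\alpha_K$ with $K \in \Cc_q$ (since $K \leq L$ and $\rk K = q$ force $\rk L \geq q$, with equality forcing $L = K$), I obtain
\[
 \beta_L \;:=\; (\alpha \odot \alpha')_L
 \;=\; \sum_{\substack{K \in \Cc_q,\, K' \in \Cc_{q'} \\ K \leq L,\, K' \leq L}}
 \Lc{K \leq L}(\alpha_K)\,\cUp\,\Lc{K' \leq L}(\alpha'_{K'}).
\]
The grading of $\cUp$ shows $\beta_L \in \Lc{L}^{q+q'}$, and the vanishing of $H^{q+q'}(M(\Ar L))$ when $\rk(L)<q+q'$ confirms $\beta \in \bigoplus_{L \in \Cc_{\geq q+q'}}\Lc{L}^{q+q'}$, giving the ``location'' half of the claim.

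For $L \in \Cc_{>q+q'}$, the coherence identity to verify is $\beta_L = \sum_{L'' \in (\Cc_{\leq L})_{q+q'}} \Lc{L'' \leq L}(\beta_{L''})$. Substituting the formula for $\beta_{L''}$, using that each $\Lc{L''\leq L}$ is an algebra homomorphism, and invoking the diagram's functoriality $\Lc{L'' \leq L} \circ \Lc{K \leq L''} = \Lc{K \leq L}$, the right-hand side rewrites as
\[
\sum_{\substack{K \in \Cc_q,\, K' \in \Cc_{q'} \\ K \leq L,\, K' \leq L}} N(K,K',L)\,\Lc{K\leq L}(\alpha_K)\cUp\Lc{K'\leq L}(\alpha'_{K'}),
\]
where $N(K,K',L) := \#\{L'' \in (\Cc_{\leq L})_{q+q'} : K \leq L'',\, K' \leq L''\}$.

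The main obstacle is then establishing that $N(K, K', L) = 1$ on every non-vanishing summand. I would argue this through the Brieskorn decomposition of $H^{q+q'}(M(\Ar L))$: under the identification $\Ll(\Ar L) \cong \Cc_{\leq L}$ from Remark \ref{rem:posetArL}, the cup product of elements drawn from the $K$- and $K'$-summands in $H^q$ and $H^{q'}$ vanishes unless $\rk(K \vee K') = q+q'$ in the geometric lattice $\Cc_{\leq L}$, and in the transverse case lies in the $(K\vee K')$-summand of the Brieskorn decomposition of $H^{q+q'}$. When the product vanishes, the $(K,K')$-contribution drops out of both sides; when $\rk(K\vee K') = q+q'$, the join $K \vee K'$ is the unique rank-$(q+q')$ element of $\Cc_{\leq L}$ lying above both $K$ and $K'$, so $N(K,K',L) = 1$ and the two sides match term by term, yielding coherence of $\alpha \odot \alpha'$.
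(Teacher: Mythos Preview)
Your proof is correct and follows essentially the same strategy as the paper's: reduce to generators $\alpha_K\in\Lc{K}^{\rk K}$, establish that the product vanishes whenever the pair $(K,K')$ is not ``transverse'' (i.e.\ when the join has rank $<q+q'$), and then observe that in the transverse case exactly one intermediate layer $L''$ of rank $q+q'$ sits between $\{K,K'\}$ and $L$. The only presentational difference is that you make the count $N(K,K',L)$ explicit and argue the vanishing via the multiplicative structure of the Orlik--Solomon algebra (products supported on non-transverse flats vanish), whereas the paper factors the product through the intermediate layer $L^\circ$ (the connected component of $K\cap K'$ containing $L$) and uses that $\Lc{L^\circ}^{q+q'}=H^*(L^\circ)\otimes H^{q+q'}(M(\Ar{L^\circ}))=0$ when $\rk L^\circ<q+q'$; these are two phrasings of the same fact.
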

\begin{proof}
From the definition of the product $\odot$ in $\oplus_{L \in \Cc} \Lc{L}$ we have that for
$L'' \in \Cc_{\geq (q+q')}$ 
$$
(\alpha \odot \alpha')_{L''} = \!\!\!\! \sum_{
\substack{
{L \in (\Cc_{\geq L''})_q}\\
{L' \in (\Cc_{\geq L''})_{q'}}
}
} \!\!\!\!  \Lc{L \leq L''}(\alpha_L) \cUp \Lc{L' \leq L''}(\alpha'_{L'}).
$$
We claim that such an element is coherent. For $\rk(L'') = q + q'$ there is nothing to check, 
while for $\rk(L'') > q + q'$ we need to check that
\begin{equation}\label{eq:check1}
(\alpha \odot \alpha')_{L''} = \sum_{\wL \in (\Cc_{\leq L''})_{q+q'}} \Lc{\wL\leq L''}(\alpha \odot \alpha')_{\wL}.
\end{equation}

In order to prove Equation \eqref{eq:check1}, we need to show that if $\alpha \in \Lc{L}^{\rk (L)}$, $\alpha' \in \Lc{L'}^{\rk (L')}$ and $\rk(L) + \rk(L') > \rk (L \cap L')$ then $\alpha \odot \alpha'= 0$. First notice that even if $L \cap L'$ can not be a layer. Nevertheless its connected components are parallel layers with the same rank, hence $\rk (L \cap L')$ is well defined. The equality $\alpha \odot \alpha'= 0$ follows since for every connected component $L^\circ$ of $L \cap L'$ we have $\Lc{L^\circ}^{\rk(L) + \rk(L')}=0$ and this imply that $(\alpha \odot \alpha')_{L^\circ}= 0$. Moreover for any layer $\overline{L}$ such that 
$\overline{L} > L$ and $\overline{L} > L'$ we have that there exist a unique connected component
$L^\circ$ of $L\cap L'$ such that $\overline{L} > L^\circ$ and we have that 
\begin{eqnarray*}
(\alpha \odot \alpha')_{\overline{L}} &=& 
\Lc{L \leq \overline{L}}(\alpha) \cup \Lc{L' \leq \overline{L}}(\alpha')
=\\&=&\Lc{L^\circ \leq \overline{L}}(\Lc{L \leq L^\circ}(\alpha) \cup \Lc{L' \leq L^\circ}(\alpha')) = \Lc{L^\circ \leq \overline{L}}(\alpha \odot \alpha')_{L^\circ}=0.
\end{eqnarray*}
Then we can assume that for any pair of layers $(L,L') \in \Cc_q \times \Cc_{q'}$ we have that
either $\rk (L\cap L') = q+q'$ or $\alpha_L \odot \alpha_{L'}=0$.
Now the equality 
\begin{equation}\label{eq:doublesum}
(\alpha \odot \alpha')_{L''} = \sum_{\wL \in (\Cc_{\geq L''\!})_{(q+q')}\phantom{xx}} 
\!\!\!\! \sum_{
\substack{
L \in (\Cc_{\geq \wL})_q\\
L' \in (\Cc_{\geq \wL})_{q'}
}} \!\!\!\!  \Lc{L \leq L''}(\alpha_L) \cUp \Lc{L' \leq L''}(\alpha'_{L'})
\end{equation}
follows by bilinearity, since it holds when $\alpha, \alpha'$ are supported on a single layer.
From Equation \eqref{eq:doublesum} it follows that Equation \eqref{eq:check1} holds. 
\end{proof}

\begin{df} \label{def:algebra_b} 
Let $\A$ be a toric arrangement in $T$ and let $\Cc$ be the poset of layers.
We define the algebra $\Ab(\A)$ as the subring of
$$
\bigoplus_{L \in \Cc} \Lc{L}
$$
endowed with the natural product $\odot$, generated as a $\Z$-module by the coherent elements.
\end{df}

\begin{rem}
Lemma \ref{lem:products} implies that $\Ab(\A)$ is also a subring of $\bigoplus_{L \in \Cc} \Lc{L}$ with respect of the multiplication given by $\odot$. Moreover, as previously remarked, the two product structures coincide on $\Ab(\A)$.
\end{rem}

\begin{prop}\label{prop:iso_algebras}
The map of $\Z$-modules $p:\Aa(\A) \to \Ab(\A)$ is an isomorphism of algebras.
\end{prop}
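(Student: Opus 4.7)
The plan is to leverage Proposition \ref{prop:iso_mod}, which already identifies $p$ as a $\Z$-module isomorphism from $\Aa(\A)$ onto the submodule of coherent elements, i.e., onto $\Ab(\A)$ as an abelian group. It thus remains to verify multiplicativity: $p(\alpha \pra \alpha') = p(\alpha) \odot p(\alpha')$. By $\Z$-bilinearity of both $\pra$ and $\odot$ it suffices to check this on homogeneous generators $\alpha \in \Lc{L}^{\rk(L)}$ and $\alpha' \in \Lc{L'}^{\rk(L')}$, and I would check the identity coordinatewise at each layer $\overline L \in \Cc$.

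The right-hand side is the easy computation. By bilinearity of $\odot$, expand $p(\alpha) = \sum_{L_1 \geq L} p(\alpha)_{L_1}$ and similarly for $p(\alpha')$; Definition \ref{def:cdot} then imposes $\rk(L_1) = \rk(L)$ and $\rk(L_2) = \rk(L')$, which combined with $L_1 \geq L$ and $L_2 \geq L'$ force $L_1 = L$ and $L_2 = L'$. Hence exactly one summand survives and one reads off
$$
(p(\alpha) \odot p(\alpha'))_{\overline L} = \Lc{L \leq \overline L}(\alpha) \cUp \Lc{L' \leq \overline L}(\alpha')
$$
whenever $L \cap L' \leq \overline L$, and $0$ otherwise.

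For the left-hand side, $p(\alpha \pra \alpha')_{\overline L}$ is the sum over all layers $L''$ satisfying $L'' \leq \overline L$, $\rk(L'') = \rk(L) + \rk(L')$ and $L \cap L' \leq L''$, of the terms $\Lc{L'' \leq \overline L}\bigl(\Lc{L \leq L''}(\alpha) \cUp \Lc{L' \leq L''}(\alpha')\bigr)$. Each map $\Lc{\cdot\leq\cdot} = i^* \otimes b$ is a tensor product of ring homomorphisms ($b$ being induced by the inclusion of complements, cf.\ Definition \ref{def:b}), so via functoriality $\Lc{L'' \leq \overline L} \circ \Lc{L \leq L''} = \Lc{L \leq \overline L}$ each summand collapses to $\Lc{L \leq \overline L}(\alpha) \cUp \Lc{L' \leq \overline L}(\alpha')$, independently of $L''$. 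What remains is a counting/transversality step: since in general $\rk(L \cap L') \leq \rk(L) + \rk(L')$, either the inequality is strict and the cup product inside each $\Lc{L^\circ}$ (for $L^\circ$ a component of $L \cap L'$) already vanishes by the same argument as in the proof of Proposition \ref{prop:colim}, so both sides are zero; or equality holds (transverse case), in which case each admissible $L''$ is forced to be a component of $L \cap L'$, and the unique such component containing the connected layer $\overline L$ (if any) contributes exactly one copy of the expression above.

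The main (mild) technical point, and the only place where one needs to be careful, is this transversality/counting step: it must deliver precisely one nonzero summand on the $\pra$-side precisely when $L \cap L' \leq \overline L$, matching the single surviving term on the $\odot$-side. Once this is settled, equality at every $\overline L$ yields $p(\alpha \pra \alpha') = p(\alpha) \odot p(\alpha')$, and combined with Proposition \ref{prop:iso_mod} this upgrades $p$ to the required isomorphism of algebras.
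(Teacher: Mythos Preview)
Your argument is correct. The coordinatewise verification goes through as you describe: the $\odot$-side collapses to a single term by the rank constraint in Definition \ref{def:cdot}, the $\pra$-side collapses by functoriality of the diagram $\Lc{}$, and the transversality/counting step is exactly the vanishing argument from the proof of Proposition \ref{prop:colim} (in the non-transverse case each summand is zero; in the transverse case the unique component of $L\cap L'$ containing $\overline L$ gives the one surviving summand).

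The paper's own proof takes a shorter route. Rather than checking $p(\alpha \pra \alpha') = p(\alpha)\odot p(\alpha')$ at every layer $\overline L$, it applies the projection $\pi$ (the left inverse of $p$) to both sides and uses that $\pi$ is injective on $\Ab(\A)$. Concretely: since $p(\alpha)$ and $p(\alpha')$ are coherent, Lemma \ref{lem:products} gives $p(\alpha)\odot p(\alpha') = p(\alpha)\cup p(\alpha')$, and then one checks directly from the definitions that $\pi\bigl(p(\alpha)\cup p(\alpha')\bigr) = \alpha \pra \alpha'$, which equals $\pi\bigl(p(\alpha\pra\alpha')\bigr)$ since $\pi\circ p = \id$. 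Injectivity of $\pi$ on coherent elements finishes the argument. This avoids your transversality/counting step entirely: projecting by $\pi$ means one only ever looks at the components $\Lc{L''}^{\rk(L'')}$, where the identification with $\pra$ is immediate. Your approach, by contrast, is self-contained (it does not invoke Lemma \ref{lem:products}) and makes the agreement visible at every layer, at the cost of re-running the vanishing argument from Proposition \ref{prop:colim}.
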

\begin{proof}
Given $\alpha, \alpha' \in \Aa(\A)$, we remarked that
$$
\pi( p(\alpha) \odot p(\alpha')) = \pi( p(\alpha) \cup p(\alpha')) =  \alpha \pra \alpha'
$$
where the second equality follows from the definition of the product $\pra$. Hence proposition follows from the injectivity of $\pi:\Ab(\A) \to \Aa(\A)$.
\end{proof}

\subsection{Proof of the main theorem} \label{ss:proof_main}

We are ready to prove our main theorem in the case of a real complexified toric arrangement.

\begin{proof}[Proof of Theorem \ref{thm:main} (complexified case)]
We will consider the map $\Phi: \bigsqcup_{L \in \Cc}\Sl{L} \to M(\A)$ defined at the beginning of Section \ref{sec:main}. This induces a map in cohomology
$$
\Phi^*: H^*(M(\A);\Z) \to \bigoplus_{L \in \Cc}
\Lc{L}
$$
that is a ring homomorphism with respect to the natural product on the sum of cohomology rings $\bigoplus_{L \in \Cc}\Lc{L}$. 

Moreover it follows from Proposition \ref{lem:coherent} that the image of $\Phi^*$ is given by 
coherent elements. Hence $\Phi^*$ maps in $\Ab(\A)$ 
and from Lemma \ref{lem:products} it is also an homomorphism
of rings, with respect to the product $\odot$ defined in Section \ref{ss:algebras}.

We need to prove the injectivity and surjectivity of the map $\Phi^*$.

Recall that from Lemma \ref{lem:decompss} we can write
$$
\widehat E_2^{p,q} = \bigoplus_{L \in \Cc} H^p(T;[H^{\rk(L)}(\Sal(\Ar{L});\Z)]_L)
$$
for the Leray spectral sequence associated to the projection $\pi: \Sal{\A}\to T_c$
and
$$
{}_LE_2^{p,q} = H^p(T_c;\mathscr H^q (\pi_L;\Z )) = H^p(L;\Z) \otimes H^q(M(\Ar{L});\Z)
$$
for the Leray spectral sequence associated to the map $\pi_L: \Sl{L} \to T_c$. Moreover, also due to 
Lemma \ref{lem:decompss}, we have the decomposition
$$
_{L} E_2^{p,q} = 
\bigoplus_{L'' \in (\Cc_{\leq L})_q} 
H^p(L;\Z)\otimes H^q(M(\Ar{L''});\Z).
$$

From Theorem \ref{thm:inclusion} we have that for any layer $L$ we have a commutative square
$$
\xymatrix{
\Sl{L} \ar[d]^{\pi_{L}} \ar[r]^(.35){\subset} & \Sal(\A)\ar[d]^{\pi}  \\
L_c \ar[r]^{\subset} & T_c
}
$$
and hence a map $\overline{\varphi}_L^*$ of Leray spectral sequences in cohomology, together with the 
following commutative diagram, where, from Theorem \ref{thm:collapses} the vertical maps $\overline{\psi}$ and $\psi_L$ are group isomorphisms; moreover $\psi_L$ and the horizontal maps $\varphi^*_L$ and $\overline{\varphi}_L^*$ are ring homomorphisms:\todo{non ho messo il ``cubo'' perch\'e in realt\`a la mappa di successioni spettrali che induce non permette di usare la funtorialit\`a, ovvero non \`e quella che induce l'iso di s.s.; la naturalit\`a vale per pull-back e per omomorfismi di fibrati sulla stessa base.}
$$
\xymatrix{
H^*(M(\A);\Z) \ar[r]^(.35){\varphi_L^*} \ar[d]_{\overline{\psi}} &  H^*(L;\Z) \otimes H^*(M(\Ar{L});\Z) 
\ar[d]^{\psi_L}\\
\widehat E_2^{p,q} \ar[r]^{\overline{\varphi}_L^*} &  _LE_2^{p,q}.
}
$$
Note that it follows from Corollary \ref{cor:decompos_ss} that the horizontal maps are surjective.
If we take the sum for on the right hand side all layers $L$ we get the commutative diagram
\begin{equation}\label{eq:badass}
\begin{split}
\xymatrix{
H^*(M(\A);\Z) \ar[r]^(.35){\Phi^*} \ar[d]_{\overline{\psi}} & \underset{L\in \Cc}{\bigoplus} H^*(L;\Z) \otimes H^*(M(\Ar{L});\Z) \ar[d]^{\underset{L\in \Cc}{\bigoplus} \psi_L}\\
\widehat E_2^{p,q} \ar[r]^{\overline{\Phi}^*} & \underset{L\in \Cc}{\bigoplus} {}_LE_2^{p,q}.
}
\end{split}
\end{equation}
As above, the vertical maps are group isomorphisms. Moreover the horizontal maps and ${\bigoplus_{L\in \Cc}} \psi_L$ are ring homomorphisms.
We want to describe the image of the subalgebra $\Ab(\A) \subset {\bigoplus}_{L\in \Cc} H^*(L;\Z) \otimes H^*(M(\Ar{L});\Z)$ of coherent elements in the subalgebra of ${\bigoplus}_{L\in \Cc}{}_LE_2^{p,q}.$

Let $L' \in \Cc_q$ be a layer and take a class $\alpha \in \Lc{L'}^q$.
Moreover we assume that $\alpha = \omega \otimes \lambda \in H^*(L';\Z) \otimes H^q(M(\Ar{L'});\Z)$.
It is clear that $\Ab(\A)$ is generated by the elements of the form $p(\alpha)$ (see Definition \ref{df:map_p}):
$$
p(\alpha)_{L} =\left\lbrace
\begin{array}{cl}
\Lc{L' \leq L}(\alpha)=i^*(\omega) \otimes b(\lambda) & \mbox{if }L' \leq L, \\
0 & \mbox{otherwise.}
\end{array}
\right.
$$
Since the map $\bigoplus_{L \in \Cc}\psi_L$ is the identity, it follows from Corollaries \ref{cor:decomp1} and Remark \ref{rmk:decomp2} that the class $\bigoplus_{L \in \Cc}\psi_Lp(\alpha)$ is the image, via $\overline{\Phi}^*$, of the class
$\omega \otimes \lambda \in \widehat{E_2^{p,q}}$. Hence $\Phi^*$ is surjective on the algebra $\Ab(\A)$.

The injectivity of the map $\Phi^*:H^*(M(\A);\Z) \to \bigoplus_{L \in \Cc} \Lc{L}$ follows by a rank argument. In fact both terms are torsion-free and we already know that the Poincar\'e polynomial of
$H^*(M(\A);\Z)$ is given by 
$$
P_{\A}(t) = \sum_{j=0}^{\infty}  |\Nc_j| (1+t)^{d-j}t^j
$$
where $\Nc_j$ is the set of pairs $(L,N) \in \Cc_j \times \mathscr{P}(\A)$ and $N$ is a no broken circuit set of cardinality $j$ 
of $\Cc_{\leq L}.$ We claim that $P_{\A}(t)$ is also the Poincar\'e polynomial of the
algebra $\Ab(\A)$. In fact from Theorem \ref{thm:iso_alg} we have $\Ab(\A) \simeq \Aa(\A)$ and $\Aa(\A)$ is the direct sum of
free modules
$$
\Aa(\A) = \bigoplus_{L\in \Cc} \Lc{L}^{\rk (L)},
$$
and the contribution of the term $\Lc{L}^{\rk (L)}$ for the Poincar\'e polynomial of $\Aa(\A)$ is 
\begin{eqnarray*}
&(1+t)^{d-\rk L}& \rk H^{\rk L}(M(\Ar{L});\Z)=\\=&(1+t)^{d-\rk L}& |\{N \in \mathscr{P}(\A) 
\mbox{ a no broken circuit set in } \Cc_{\leq L}\}|.
\end{eqnarray*}
Taking the sum over all $L \in \Cc$ we get that the two torsion free algebras $H^*(M(\A);\Z)$ and $\Ab(\A)$ have the same Poincar\'e polynomial $P_{\A}(t)$. Hence the surjective map
$\Phi^*:H^*(M(\A);\Z) \to \Ab(\A)$ is also injective.
The theorem follows.
\end{proof}

\section{The general case}\label{sec:general}

From now on we drop the restriction to complexified arrangements and
treat general complex toric arrangements. 
We will show that the description of the cohomology ring of the
complement naturally applies also to this case. In fact, a
deletion-restriction argument allows us to always reduce to the
complexified case.

\subsection{Deletion-restriction recursion}
We thus start with a brief discussion of the effect on the cohomology of removing an
hypertorus from the arrangement and of restricting the arrangement to an hypertorus.

This type of operation has been investigated by Bibby in \cite{bibby2013} and by
Deshpande and Sutar in \cite{Deshpande}. Here we discuss how some of
their results generalize to cohomology with integer coefficients, and
start with a remark on degeneration of spectral sequences.

\begin{rem} \label{rem:notorsion}

In analogy with Theorem \ref{thm:collapses}, the Leray spectral sequence induced by the inclusion $M(\A) \into T$, also considered in 
\cite{bibby2013}
(see also \cite[sec. 4.3]{dupont2013}, gives, as a second term
$$
E_2^{p,q} = \bigoplus_{L \in \Cc_q} H^p(L;\Z) \otimes H^q(M(\Ar{L});\Z) \Longrightarrow H^{p+q}(M(\A);\Z).
$$
Looking at the Poincar\'e polynomial $P_\A(t)$ of $H^*(M(\A);\C)$ as in the proof of Theorem \ref{thm:collapses} in the complexified case, we can see that $E_2$ is a free $\Z$-module and the rank of $E_2$ is the same as the rank of $H^*(M(\A);\C).$ This implies that the spectral sequence collapses at the $E_2$ term and hence the cohomology $H^*(M(\A);\Z)$ is torsion free\footnote{We thank Cl\'ement Dupont for a useful conversation where we noticed this natural generalization of Theorem \ref{thm:collapses}.}.
In particular, since the $E_2$ term of the spectral sequence is isomorphic as a $\Z$-module to the cohomology $H^*(M(\A);\C),$ given a layer $L \in \Cc$ and a class $\alpha \in H^p(L;\Z) \otimes H^{\rk(L)}(M(\Ar{L});\Z)$ we can associate in a natural way an element $\overline{\alpha} \in H^*(M(\A);\C).$
\end{rem}

\begin{lem} \label{lem:inclusion}
Let $\A \subset \B$ be toric arrangements in $T$, the inclusion $M(\B) \subset M(\A)$ in\-du\-ces an injective homomorphism of cohomology rings $i^*: H^*(M(\A);\Z) \into H^*(M(\B);\Z).$
\end{lem}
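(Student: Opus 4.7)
My plan is to compare the Leray spectral sequences of the two inclusions $M(\A)\hookrightarrow T$ and $M(\B)\hookrightarrow T$. By Remark \ref{rem:notorsion}, both collapse at $E_2$, with second pages
$$
E_2^{p,q}(\A)=\bigoplus_{L\in \Cc(\A)_q} H^p(L;\Z)\otimes H^q(M(\Ar{L}^{\A});\Z),\qquad E_2^{p,q}(\B)=\bigoplus_{L\in \Cc(\B)_q} H^p(L;\Z)\otimes H^q(M(\Ar{L}^{\B});\Z),
$$
where I write $\Ar{L}^{\A}$, $\Ar{L}^{\B}$ to emphasize which ambient arrangement is used in Definition \ref{df:A0}. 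First I would record two elementary facts: every layer of $\A$ is also a layer of $\B$, so $\Cc(\A)\subseteq \Cc(\B)$; and for $L\in \Cc(\A)$ one has $\Ar{L}^{\A}\subseteq \Ar{L}^{\B}$. Iterating the Orlik--Solomon deletion--restriction short exact sequence \eqref{eq:2} one hyperplane at a time along a chain from $\Ar{L}^{\A}$ to $\Ar{L}^{\B}$ produces an injection $\iota_L: H^q(M(\Ar{L}^{\A});\Z)\hookrightarrow H^q(M(\Ar{L}^{\B});\Z)$ for every $q$.

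Next, the inclusion $i: M(\B)\hookrightarrow M(\A)$ (both sitting inside $T$) induces a morphism of the two Leray spectral sequences converging to $i^*$. At the sheaf level this is a map $\mathscr H^q_\A\to \mathscr H^q_\B$ between the decomposed sheaves of Lemma \ref{lem:decompss} (used here in the variant for the inclusion into $T$, as already noted in Remark \ref{rem:notorsion}), and on stalks it is induced by the inclusion of local complements. Using naturality of the Brieskorn decomposition with respect to the arrangement inclusion $\Ar{L}^{\A}\subseteq \Ar{L}^{\B}$ --- which can be established through a careful, compatible choice of reference faces in Proposition \ref{cor:cbl} for both arrangements --- one sees that the $L$-summand of $\mathscr H^q_\A$ (supported on $L_c$, for $L\in \Cc(\A)_q$) maps into the $L$-summand of $\mathscr H^q_\B$ via the constant-sheaf extension of $\iota_L$, while summands of $\mathscr H^q_\B$ indexed by layers in $\Cc(\B)_q\setminus \Cc(\A)_q$ are untouched. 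Taking $H^p(T;-)$ yields $\id_{H^p(L)}\otimes \iota_L$ on each $L$-block, so the $E_2$-map is injective.

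Since both spectral sequences collapse at $E_2$, this injection on $E_2$ coincides with the induced map on the associated graded of the Leray filtrations of $H^*(M(\A);\Z)$ and $H^*(M(\B);\Z)$. Because these filtrations are bounded and the cohomologies are torsion-free (Remark \ref{rem:notorsion}), injectivity on the associated graded promotes to injectivity of $i^*$ by the standard argument: any nonzero class has a nonzero symbol in some $E_\infty^{p,n-p}(\A)$, whose image in $E_\infty^{p,n-p}(\B)$ is then nonzero. The main obstacle in this plan is the naturality step in the second paragraph: verifying that the sheaf morphism is block-diagonal with respect to the layer decomposition, which amounts to checking that the combinatorial Brieskorn maps of Proposition \ref{cor:cbl} can be arranged to commute with the inclusion of arrangements --- once this is pinned down, the rest of the argument is formal.
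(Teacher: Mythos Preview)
Your proposal is correct and follows essentially the same approach as the paper: both arguments compare the Leray spectral sequences of $M(\A)\hookrightarrow T$ and $M(\B)\hookrightarrow T$, identify the induced $E_2$-map as a direct sum over $L\in\Cc(\A)$ of $\id_{H^p(L)}\otimes\iota_L$ with $\iota_L$ the natural injection coming from $M(\B[L])\hookrightarrow M(\Ar{L})$, and conclude via collapse at $E_2$. You are more explicit than the paper on two points---justifying $\iota_L$ via iterated deletion--restriction and flagging the block-diagonality of the sheaf map---but the paper simply asserts these, so your caution is warranted rather than a deviation.
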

\begin{proof}
This is straightforward from the description of the Leray spectral sequences $_{\A}E_r^{p,q}$ and $_{\B}E_r^{p,q}$ associated to the inclusion $M(\A) \into T$ and $M(\B) \into T.$
Following the construction of the spectral sequence one can see that the inclusion $i:M(\B) \subset M(\A)$ induces a map of spectral sequences that on the $E_2$-term is as follows:
$$
i_*:\bigoplus_{L \in \Cc_q(\A)} H^p(L;\Z) \otimes H^q(M(\Ar{L});\Z) \to \bigoplus_{L \in \Cc_q(\B)} H^p(L;\Z) \otimes H^q(M(\B[L]);\Z)
$$
where 
the map $i^*$ is given by the sum of the homomorphisms on the summands
$$i^*_L:H^p(L;\Z) \otimes H^q(M(\Ar{L});\Z)  \to H^p(L;\Z) \otimes H^q(M(\B[L])\Z)$$
and $i^*_L$ is given by the identity on the first factor and by the natural injection
$$H^q(M(\Ar{L});\Z) \into H^q(M(\B[L]);\Z)$$
on the second factor induced by the inclusion $M(\B[L]) \into M(\Ar{L})$. The Lemma follows since the spectral sequence collapses at the page $E_2$.
\end{proof}

Given a toric arrangement $\A$, choose $Y_0 \in \A$ and let $\A' = \A \setminus \{ Y_0 \}$ and $\A'' = \{ Y_0 \cap Y \mid \mbox{  for } Y \in \A'\}$. 
We consider $\A'$ as a toric arrangement in $T$, even if its rank differs from the rank of $\A$. We consider $\A''$ as an arrangement in $Y_0$.

\begin{thm} We have the following short exact sequence of groups:
$$
0 \to H^*(M(\A');\Z)\to H^*(M(\A);\Z) \to H^{*-1}(M(\A'');\Z) \to 0.
$$
\end{thm}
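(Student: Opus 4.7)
The plan is to obtain the short exact sequence as the breaking up of the Gysin long exact sequence associated to the inclusion of $M(\A)$ as the complement of a smooth, oriented, codimension-two submanifold of $M(\A')$. More precisely, set-theoretically we have $M(\A')=M(\A)\sqcup (Y_0\cap M(\A'))$, and by definition $Y_0\cap M(\A')=M(\A'')$ viewed as an open subset of the hypertorus $Y_0\subseteq T$. Since $Y_0$ is a complex submanifold of $T$ and the other hypertori in $\A'$ intersect it transversely away from $M(\A'')$, the subspace $M(\A'')\hookrightarrow M(\A')$ is a closed smooth complex submanifold of complex codimension one, hence of real codimension two and canonically oriented.

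The Gysin long exact sequence (equivalently, the Thom–Leray sequence of a tubular neighborhood with oriented normal bundle) then reads
\begin{equation*}
\cdots \to H^{k-2}(M(\A'');\Z) \xrightarrow{\;\delta_{k-2}\;} H^k(M(\A');\Z) \xrightarrow{\;i^*\;} H^k(M(\A);\Z) \xrightarrow{\;\mathrm{res}\;} H^{k-1}(M(\A'');\Z) \to \cdots
\end{equation*}
where $i^*$ is the restriction map induced by the open inclusion $M(\A)\hookrightarrow M(\A')$.

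Now I would invoke Lemma \ref{lem:inclusion}, which guarantees that $i^*$ is injective in every degree $k$. Injectivity of $i^*$ immediately forces $\delta_{k-2}=0$ for all $k$, so the long exact sequence decouples into the desired short exact sequences
\begin{equation*}
0 \to H^k(M(\A');\Z) \xrightarrow{i^*} H^k(M(\A);\Z) \xrightarrow{\mathrm{res}} H^{k-1}(M(\A'');\Z) \to 0.
\end{equation*}
(Equivalently, because both ends are torsion-free by Remark \ref{rem:notorsion}, one could verify surjectivity of $\mathrm{res}$ by a rank count using the already-established Poincaré polynomials; but vanishing of the connecting map is the cleanest route once Lemma \ref{lem:inclusion} is in hand.)

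The only potential obstacle is checking that the hypotheses of the Gysin sequence are met — in particular, that $M(\A'')$ sits inside $M(\A')$ as a closed submanifold rather than merely as a locally closed subset. This is immediate because removing the hypertori of $\A'$ from $T$ removes precisely the points of $Y_0$ that would otherwise lie in the closure of $M(\A'')$ but outside $M(\A'')$ itself, so $M(\A'')$ is closed in $M(\A')$ by construction. Once this topological setup is in place, the argument is short and relies entirely on the injectivity statement of Lemma \ref{lem:inclusion}.
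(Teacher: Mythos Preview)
Your argument is correct and takes a genuinely different route from the paper's own proof. The paper does not set up the Gysin sequence directly; instead it invokes the short exact sequence of $E_2$-pages
\[
0 \to {}_{\A'}E_2^{p,q} \to {}_{\A}E_2^{p,q} \to {}_{\A''}E_2^{p,q-1} \to 0
\]
established by Bibby, and then appeals to Remark~\ref{rem:notorsion} (collapse at $E_2$ and torsion-freeness) to identify each $E_2$-page with the corresponding cohomology as a $\Z$-module.

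Your approach has the advantage of being self-contained: once Lemma~\ref{lem:inclusion} is available, the Gysin long exact sequence breaks immediately, and you do not need to import Bibby's $E_2$-level statement or worry about how a short exact sequence of associated gradeds lifts to a short exact sequence of the abutments. The paper's approach, by contrast, packages the deletion--restriction step as a consequence of the spectral-sequence machinery already developed, which fits its overall narrative but leaves the passage from $E_2$ to $H^*$ somewhat implicit. Both proofs ultimately rest on the same rank/torsion-freeness information coming from Remark~\ref{rem:notorsion}; yours uses it via Lemma~\ref{lem:inclusion}, the paper uses it to justify the identification $E_2\cong H^*$.
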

\begin{proof}

We observe that, in light of Remark \ref{rem:notorsion}, 
for every toric arrangement $\B$ we have an isomorphism of 
$\Z$-modules 
$$
_{\B}E_2^{p,q} \to H^*(M(\B);\Z),
$$
where we write $_{\B}E_2^{p,q}$ for the Leray spectral sequence 
in the case of the arrangement $\B$.
The result follows applying the isomorphism above to the exact sequence (see \cite{bibby2013})
$$
0 \to {}_{\A'}E_2^{p,q} \to {}_{\A}E_2^{p,q} \to {}_{\A''}E_2^{p,q} \to 0.
$$
\end{proof}

\subsection{The cohomology ring of (non complexified) toric arrangements}

Given a layer $L \in \Cc = \Cc(\A)$ we define, as we did for hyperplane arrangements, the subarrangement:
$$
\A_L:= \{Y \in \A \mid L \in Y\}.
$$

\begin{rem}
For any layer $L \in \Cc(\A)$, the subarrangement $\A_L$ is ``almost'' a complexified toric arrangement, in the following sense: choose an element $p \in L$ and consider the inverse $p^{-1} \in T$ of $p$. Then the translated arrangement $p^{-1}\A_L = \{ p^{-1}Y \mid Y \in \A_L \}$ is complexified.

It is straightforward to see that the descriptions of the cohomology ring of $M(\A)$ given in Theorem \ref{thm:main} and Theorem \ref{thm:iso_alg} extend to this case.
\end{rem}

Let $\Cc_{\max}:= \Cc_{\rk(\A)}$ be the set layers of maximal rank of in $\Cc.$ We recall that if $\A$ is essential, then the elements in $\Cc_{\max}$ are points. Otherwise, given the subtorus $\overline{L}$ that is the translation of any of the element in $\Cc_{\max}$ passing through the identity, we can factor $M(\A) = \overline{L} \times M(\A/\overline{L}),$ where $\A/\overline{L}$ is the essential arrangement induced by $\A$ in $T/\overline{L}.$

For every layer $L \in \Cc = \Cc(\A)$ we can choose a layer of maximal rank $P(L) \in \Cc_{\max}$ contained in $L.$ 
Now, we fix a layer $P_1 \in \Cc_{\max}$ and let $L \in \Cc(\A_{P_1}).$ Let $\alpha \in H^*(L;\Z) \otimes H^{\rk(L)}(M(\A_{P_1}[L]);\Z).$ Moreover, let $\overline{\alpha}$ be the class
corresponding to $\alpha$ in the ring $A(\A_{P_1}) = H^*(M(\A_{P_1});\Z).$ 
We can consider the class 
$$
\beta \in H^*(L;\Z) \otimes H^{\rk(L)}(M(\A_{P(L)}[L]);\Z) 
$$ 
induced by $\alpha$ via the inclusion $i:L \times M(\A_{P(L)}[L]) \into L \times M(\A_{P_1}[L]):$
$$
\beta:= i^* \alpha
$$
and let $\overline{\beta}$ be the corresponding class in $A(\A_{P(L)}) = H^*(M(\A_{P(L)});\Z).$
Finally, let $\widetilde{\alpha}$ (resp. $\widetilde{\beta}$) the class induced by $\overline{\alpha}$ (resp. $\overline{\beta}$) in the tensor product $\bigotimes_{P \in \Cc_{\max}} A(\A_P)$ where the $P_1$-factor (resp. $P(L)$-factor) is $\overline{\alpha}$ (resp. $\overline{\beta}$) and all the other factors equal $1$.

We define the ideal $$I(\A) \subset \bigotimes_{P \in \Cc_{\max}} A(\A_P)$$ generated 
by the elements of the form 
$$\widetilde{\alpha} - \widetilde{\beta}$$ for any pair $(\widetilde{\alpha}, \widetilde{\beta})$ constructed as above. Moreover we define the ideal $$J(\A)\subset \bigotimes_{P \in \Cc_{\max}} A(\A_P)$$  generated by all the cup products of the form
$$
\tal_1 \cup \cdots \cup \tal_h
$$
for some disjoint layers $P_{j_1}, \ldots, P_{j_h} \in \Cc_{\max}$ and $L_{j_1}, \ldots, L_{j_h} \in \Cc$ such that:
\begin{itemize}
\item[i)] $P(L_{j_i}) = P_{j_i}$;
\item[ii)] $\tal_i$ ($i = 1, \ldots, h$) is an element in the tensor product $\bigotimes_{P \in \Cc_{\max}} A(\A_P)$ induced 
by $\oal_i \in H^*(M(\A_{P_{j_i}});\Z)$ on the $P_{j_i}$-factor and 
all the other factors equal $1$, with $\oal_i$ induced by 
a class $\alpha_i \in H^*(L_{j_i};\Z) \otimes H^{\rk(L_{j_i})}(M(\A_{P{j_i}}[L_{j_i}]);\Z)$; 
\item[iii)] the layers $L_{j_i}$ have trivial intersection: $$\bigcap_{i = 1}^h L_{j_i} = \emptyset.$$
\end{itemize}

We consider the map 
$$
\Delta: M(\A) \to \prod_{P \in \Cc_{\max}} M(\A_P)
$$
induced by the inclusions $M(\A) \into M(\A_P)$.

The next proposition is useful in order to understand the corresponding cohomology homomorphism
$$
\Delta^*: \bigotimes_{P \in \Cc_{\max}} H^*(M(\A_P);\Z) \to H^*(M(\A);\Z).
$$
\begin{prop}
The homomorphism $\Delta^*$ is surjective and the kernel of $\Delta^*$ is given by the ideal $I(\A)+J(\A)$.
\end{prop}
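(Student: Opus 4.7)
The key input is that, for each $P\in\Cc_{\max}$, the arrangement $\A_P$ is a translate of a complexified one, so the complexified case of Theorem \ref{thm:main} yields the graded algebra identification $A(\A_P)\cong H^*(M(\A_P);\Z)=\bigoplus_{L\supseteq P}\Lc{L}^{\rk L}$, together with the fact from Remark \ref{rem:notorsion} that $H^*(M(\A);\Z)$ splits additively as $\bigoplus_{L\in\Cc}\Lc{L}^{\rk L}$. The second ingredient is that the inclusion $M(\A)\hookrightarrow M(\A_P)$ induces on cohomology the identity on the $L$-summand whenever $L\supseteq P$ and sends summands indexed by $L\not\supseteq P$ into zero, as follows from Lemma \ref{lem:inclusion} and the spectral-sequence comparison used there. \emph{Surjectivity} of $\Delta^*$ is then immediate: every summand $\Lc{L}^{\rk L}$ of $H^*(M(\A);\Z)$ is hit by $\Delta^*$ applied to a suitable single-factor tensor placed at the $P(L)$-factor.

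For the containment $I(\A)+J(\A)\subseteq\ker\Delta^*$ I check generators. For $\widetilde\alpha-\widetilde\beta\in I(\A)$ the equality $(\A_{P_1})[L]=(\A_{P(L)})[L]=\Ar{L}$ shows that the inclusion $i$ in the construction is the identity, so $\beta=\alpha$ and both $\Delta^*\widetilde\alpha$ and $\Delta^*\widetilde\beta$ agree with the image of $\alpha$ in the $L$-summand of $H^*(M(\A);\Z)$. For $\widetilde\alpha_1\cup\cdots\cup\widetilde\alpha_h\in J(\A)$, since $\Delta^*$ is a ring map this maps to a cup product of classes supported respectively at $L_{j_1},\ldots,L_{j_h}$; by multiplicativity of the Leray spectral sequence, such a product lives in summands indexed by layers $L''\subseteq\bigcap_i L_{j_i}$, and so vanishes under the hypothesis $\bigcap_i L_{j_i}=\emptyset$.

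For the reverse containment $\ker\Delta^*\subseteq I(\A)+J(\A)$ I plan to reduce every element modulo $I(\A)+J(\A)$ to a canonical sum $\sum_L\widetilde\gamma_L^{(P(L))}$ of single-factor tensors placed at prescribed factors. The reduction is by induction on the number $h$ of nontrivial factors of a simple tensor with support layers $L_1,\ldots,L_h$: if $\bigcap_i L_i=\emptyset$, first reposition each factor at $P(L_i)$ using $I(\A)$ and then kill the resulting element by $J(\A)$; if $\bigcap_i L_i\neq\emptyset$, choose a maximal layer $P''$ contained in the intersection, use $I(\A)$ to move every component to the $P''$-factor, and collapse them to a single-factor element via the multiplication in $A(\A_{P''})$, which strictly decreases $h$. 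Once the normal form is reached, $\Delta^*$ sends the sum $\sum_L\widetilde\gamma_L^{(P(L))}$ to $\sum_L\gamma_L$ in the direct-sum decomposition of $H^*(M(\A);\Z)$, so the kernel of $\Delta^*$ on canonical forms is trivial, and the original element is already in $I(\A)+J(\A)$.

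The \emph{main obstacle} is this inductive reduction. One must verify that repositioning via $I(\A)$ commutes with the ring operations in the various $A(\A_{P''})$ when several factors are collapsed at once, that the intermediate case of nonempty intersection with deficient rank (where the product inside $A(\A_{P''})$ already vanishes by the rank condition in Definition \ref{def:algebra_a}) is absorbed by $I(\A)$ without needing a strengthening of $J(\A)$, and that the resulting normal form is independent modulo $I(\A)+J(\A)$ of the choice of $P''$ when layers intersect.
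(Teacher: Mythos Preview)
Your proposal is correct and follows essentially the same approach as the paper: surjectivity via single-factor tensors at the $P(L)$-slot, the containment $I(\A)+J(\A)\subseteq\ker\Delta^*$ by checking generators, and the reverse containment by reducing every element modulo $I(\A)+J(\A)$ to a sum in the submodule $V(\A)$ of single-factor tensors, on which $\Delta^*$ is injective. The only notable difference is stylistic: for $J(\A)\subseteq\ker\Delta^*$ the paper argues directly with cocycle supports in disjoint neighbourhoods of the $L_{j_i}$, whereas you invoke multiplicativity of the Leray spectral sequence; and in the empty-intersection step you add an explicit repositioning via $I(\A)$ before invoking $J(\A)$, which the paper leaves implicit.
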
 
\begin{proof}
We begin showing that the map $\Delta^*$ is surjective. Let consider a layer $L \in \Cc$ and and an element $\alpha \in H^*(L;\Z) \otimes H^{\rk(L)}(M(\Ar{L});\Z).$  Let $\overline{\alpha} \in  H^*(M(\A);\Z)$ be the corresponding class. The hyperplane arrangements $\Ar{L}$ and $\A_{P(L)}[L]$ are equal. Hence we can consider the class $\overline{\beta} \in H^*(M(\A_{P(L)});\Z)$ associated to $$\alpha \in H^*(L;\Z) \otimes H^{\rk(L)}(M(\A_{P(L)}[L]);\Z) = H^*(L;\Z) \otimes H^{\rk(L)}(M(\Ar{L});\Z).$$
From the description of the map $i^*: H^*(M(\A_{P(L)});\Z) \to H^*(M(\A);\Z)$ given in the proof of Lemma \ref{lem:inclusion} we have that $i^*(\overline{\beta}) = \overline{\alpha}$. Hence we can consider the class 
$$\widetilde{\beta} \in \bigotimes_{P \in \Cc_{\max}} H^*(M(\A_P);\Z) = \bigotimes_{P \in \Cc_{\max}}  A(\A_P)$$ 
given by the product of the term $\overline{\beta}$ in the $P(L)$-factor and $1$ for all other factors and we have
\begin{equation} \label{eq:Deltastar}
\Delta^*\widetilde{\beta} = \overline{\alpha}
\end{equation}
and the surjectivity of $\Delta^*$ follows since the element $\overline{\alpha}$ runs over a set of generators of $H^*(M(\A);\Z)$.

Let us define the $\Z$-submodule $$V(\A) \subset \bigotimes_{P \in \Cc_{\max}} A(\A_P)$$ generated by all the classes $\widetilde{\beta}$ as above, that is with $\widetilde{\beta}$ induced in one of the factors of the tensor product by a class $\beta \in H^*(L;\Z) \otimes H^{\rk(L)}(M(\A_{P(L)}[L]);\Z)$ for all possible layers $L \in \Cc(\A)$ and all the other factors equal to $1$.
We notice that by equation \eqref{eq:Deltastar} the restriction of the map $\Delta^*$ to $V(\A)$ is injective.

It is clear that the ideal $I(\A)$ is contained in the kernel of $\Delta^*.$ We need to show that also $J(\A) \subset \ker\Delta^*$. Given a generator $\tal_1 \cup \cdots \cup \tal_h,$
we can consider the images $\Delta^*\tal_i$. From the Leray spectral sequence associated to the inclusion $M(\A) \into T$ it follows that $\Delta^*\tal_i$ can be represented by a cocycle supported in a neighborhood of $L_{j_i}$. Since we can choose neighborhoods $U_1, \ldots, U_h$ of the layers $L_{j_1}, \ldots, L_{j_h}$ such that $\bigcap U_i = \emptyset$, this implies that the product $\Delta^*\tal_1 \cup \cdots \cup \Delta^*\tal_h$ must be trivial.

In order to show that the kernel of $\Delta^*$ is the ideal $I(\A)+J(\A)$ it remains to show that any element of $\bigotimes_{P \in \Cc_{\max}} H^*(M(\A_P);\Z)$ is equivalent, modulo $I(\A)+J(\A)$, to an element in the submodule $V(\A).$

Let $\omega$ be an element in $\bigotimes_{P \in \Cc_{\max}} H^*(M(\A_P);\Z) = \bigotimes_{P \in \Cc_{\max}} A(\A_{P}).$ We can reduce to the case of $\omega = \bigotimes_{P \in \Cc_{\max}} \oal_P$, with $\oal_{P} \in A(\A_{P}) $. 

If we write $\tal_P$ for the tensor product with $P$-factor $\oal_P \in A(\A_P)$ and $1$ for all other factors, we have 
$$
\omega = \tal_{P_1} \cup \cdots \cup \tal_{P_k}
$$
where $\Cc_{\max} = \{P_1, \ldots, P_k \}$.

Moreover we can suppose that for every $P \in \Cc_{\max}$ the class $\oal_P$ is induced by a class 
$\alpha_P \in H^*(L_P) \otimes H^{\rk(L_P)}(M(\A_P[L_P]);\Z)$ for some $L_P \in \Cc(\A_P)$.

If $\bigcap_{P \in \Cc_{\max}} L_P = \emptyset$ then $\omega \in J(\A)$ and hence $\omega \equiv 0 \mod I(\A) + J(\A)$.

Otherwise, the intersection of the layers $L_P$ is non-empty. Let $\oP \in \Cc_{\max}$ be such that $\oP \in \bigcap_{P \in \Cc_{\max}} L_P.$

Since $\oP \subset L_P$ for all $P \in \Cc_{\max}$, we have that the local arrangements 
$\A_P[L_P]$ and $\A_{\oP}[L_P]$ are equal. Hence the class $\tal_{P}$ is equivalent, modulo the ideal $I(\A)$, to the class $\tga_P$ that is the tensor product with $\oP$-factor $\oga_P \in A(\A_{\oP})$ and all the other factors equal $1$ and $\oga_P$ is induced
by the class $\alpha_P \in  H^*(L_P) \otimes H^{\rk(L_p)}(M(\A_{\oP}[L_P]);\Z) =  H^*(L_P) \otimes H^{\rk(L_p)}(M(\A_P[L_P]);\Z)$. Hence we have reduced, modulo $I(\A)$, the class $\omega$ to the product of the classes $\tga_P$:
$$
\omega \equiv \tga_{P_1} \cup \cdots \cup \tga_{P_k} \mod I(\A)
$$
and the right hand side is a tensor product with $\oP$-factor equal to $\oga_{P_1} \cup \cdots \cup \oga_{P_k} \in A(\A_{\oP})$ and all other factors equal to $1$.
Finally, since $A(\A_{\oP})$ decomposes as a direct sum
$$\bigoplus_{L \in \Cc(\A_{\oP})} H^*(L) \otimes H^{\rk(L)}(M(\A_{\oP}[L]);\Z)$$ 
we have that $$\oga_{P_1} \cup \cdots \cup \oga_{P_k} = \sum_{L \in \Cc(\A_{\oP})} \ode_L$$ 
where $\ode_L$ is induced by a class in $H^*(L) \otimes H^{\rk(L)}(M(\A_{\oP}[L]);\Z)$. So we can write 
$$\omega \equiv \sum_{L \in \Cc(\A_{\oP})} \tde_L$$ where $\tde_L$ is the class in tensor product with $\oP$-factor $\ode_L$ and all other factors equal to $1$. Now it is clear that each of the summands $\tde_L$ can be replaced, modulo $I(\A)$ with an element in $V(\A)$ and this complete the proof.
\end{proof}

We recall that in Definition \ref{def:algebra_a} we introduced the ring 
$$
A(\A) = \bigoplus_{L\in \Cc} \Lc{L}^{\rk (L)}.
$$

\begin{rem}
We notice that the definitions of the algebras $A(\A)$ and $B(\A)$ (Definition \ref{def:algebra_b}) do not depend on the structure of complexified arrangement. In particular all the results in Section \ref{ss:algebras} hold for any arrangement and  Proposition \ref{prop:iso_algebras} gives, for any arrangement $\A$, the isomorphism
$$
A(\A) \simeq B(\A).
$$
\end{rem}

We can then state and prove in full generality the result of Theorem \ref{thm:main}:

\begin{thm} \label{thm:final_part_A}
There exists a well defined map $\overline{\iota}:\left(\bigotimes_{P \in \Cc_{\max}} A(\A_P)\right) / \ker \Delta^* \to  A(\A) $ that induces the isomorphism
$H^*(M(\A);\Z) \to A(\A).$
\end{thm}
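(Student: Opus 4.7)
The plan is to construct a candidate map $\iota:\bigotimes_{P \in \Cc_{\max}} \Aa(\A_P) \to \Aa(\A)$, verify it descends to $\overline{\iota}$ on the quotient by $\ker \Delta^*$, and conclude it is an isomorphism via a rank argument. First I would define $\iota$ on the submodule $V(\A)$ of single-factor tensors: given $\tbe$ supported on a factor $P \in \Cc_{\max}$ with value $\oal \in \Aa(\A_P)$ arising from the $L$-summand $H^*(L;\Z)\otimes H^{\rk(L)}(M(\A_P[L]);\Z)$ for some $L \in \Cc(\A_P)$, I set $\iota(\tbe):= \alpha$, the element of $\Aa(\A)$ corresponding to $\oal$ in the $L$-summand (well-defined because $\A_P[L] = \Ar{L}$ whenever $P \subseteq L$). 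I extend $\iota$ multiplicatively over the tensor product, using the inclusion of direct summands $\Aa(\A_P) \hookrightarrow \Aa(\A)$ within each factor and the $\pra$-product of $\Aa(\A)$ across different factors; the identifications $\Aa(\A_P) \cong H^*(M(\A_P);\Z)$ used here come from the complexified case of Theorem \ref{thm:main} applied to a translate of $\A_P$.

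Next I would check $\iota(\ker\Delta^*)=0$. For a generator $\tal - \tbe$ of $I(\A)$, both tensors carry the same underlying class $\alpha = \beta \in H^*(L;\Z)\otimes H^{\rk(L)}(M(\Ar{L});\Z)$ placed on different $P$-factors, so $\iota$ sends both to the same element of $\Aa(\A)$. For a generator $\tal_1 \cUp \cdots \cUp \tal_h$ of $J(\A)$ with $\bigcap L_{j_i} = \emptyset$, the corresponding iterated $\pra$-product in $\Aa(\A)$ is zero by Definition \ref{def:algebra_a}, since no layer $L''\in\Cc$ satisfying $\bigcap L_{j_i} \leq L''$ of the required rank exists. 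Consequently $\iota$ descends to a well-defined homomorphism $\overline{\iota}:\left(\bigotimes_P \Aa(\A_P)\right)/\ker\Delta^* \to \Aa(\A)$.

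The delicate step is to show $\overline{\iota}$ respects the ring structure. The naive inclusion $\Aa(\A_P) \to \Aa(\A)$ is not a ring homomorphism in general: the product of $L_1$- and $L_2$-summand elements in $\Aa(\A_P)$ sums only over layers $L'' \in \Cc(\A_P)$ (those containing $P$), while in $\Aa(\A)$ the $\pra$-product sums over all connected components of $L_1\cap L_2$ of correct rank, some of which may not contain $P$. The resolution is that such ``other'' components are captured by $I(\A)$: the missing $L''$-contribution can be obtained from a single-factor tensor supported on a different maximal-rank layer $\oP'\subseteq L_1\cap L_2$, and $I(\A)$ identifies this with the corresponding tensor on $P$. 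Implementing the reduction scheme of the preceding proposition (pick $\oP\subseteq L_1\cap L_2$, relocate the factors to $\oP$ using $I(\A)$, expand the product in $\Aa(\A_{\oP})$, and sum contributions from the remaining components by varying $\oP$) yields the identity $\overline{\iota}(\tbe_1 \cdot \tbe_2) = \iota(\tbe_1) \pra \iota(\tbe_2)$.

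Finally, $\overline{\iota}$ is an isomorphism by a rank argument. Both source and target are free $\Z$-modules: the source has Poincar\'e polynomial $P_\A(t)$ by Remark \ref{rem:notorsion} combined with the preceding proposition, while the target has the same Poincar\'e polynomial by the decomposition $\Aa(\A) = \bigoplus_L \Lc{L}^{\rk L}$ and the rank computation at the end of the proof of the complexified case of Theorem \ref{thm:main}. Since $\overline{\iota}$ is visibly surjective---every generator of $\Aa(\A)$ lying in an $L$-summand is the image of a single-factor tensor on any $P \in \Cc_{\max}$ with $P \subseteq L$---it is an isomorphism. Composing with the isomorphism $H^*(M(\A);\Z) \cong \left(\bigotimes_P \Aa(\A_P)\right)/\ker\Delta^*$ from the preceding proposition yields the claimed ring isomorphism. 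The main obstacle is the third step: the careful bookkeeping required to reconcile the two product structures when $L_1\cap L_2$ is disconnected.
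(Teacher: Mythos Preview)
Your proposal follows essentially the same architecture as the paper's proof: define $\iota_P:A(\A_P)\to A(\A)$ on summands, extend to $\iota$ on the tensor product by cup product, show $I(\A)+J(\A)\subseteq\ker\iota$, observe surjectivity, and finish with a rank count. The treatment of $I(\A)$, $J(\A)$, surjectivity, and the rank argument is correct and matches the paper.

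Where you diverge is in your third paragraph, and the divergence is based on a misconception. You assert that the inclusion $A(\A_P)\to A(\A)$ is not a ring homomorphism because ``the product of $L_1$- and $L_2$-summand elements in $A(\A_P)$ sums only over layers $L''\in\Cc(\A_P)$ (those containing $P$)''. But $\Cc(\A_P)$ does \emph{not} consist only of layers containing $P$: by definition a layer of $\A_P$ is any connected component of an intersection of hypertori in $\A_P$, and such intersections contain $P$ but their individual components need not. In particular, every connected component of $L_1\cap L_2$ of rank $\rk L_1+\rk L_2$ is a layer of $\A_P$ (and also of $\A$), so the index sets for the $\pra$-product agree. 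Moreover, when $\rk(L_1\cap L_2)<\rk L_1+\rk L_2$, the $L''$-component of the product already vanishes in both algebras: the second tensor factor lands in the image of the Brieskorn maps for the flats $\cap\Ar{L_1}$ and $\cap\Ar{L_2}$ inside $\Ar{L''}$, and their cup product is zero by the Orlik--Solomon relations because the union of the corresponding hyperplane sets is dependent. Hence $\iota_P$ is a ring homomorphism directly, exactly as the paper asserts (``it is easy to see that the map $\iota_P$ is a ring homomorphism''), and the elaborate relocation argument you sketch is unnecessary. Once you drop that detour, your proof coincides with the paper's.
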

\begin{proof}
For any $P \in \Cc_{\max}$ there's a natural map $\iota_P: A(\A_P) \to A(\A)$ that is induced on the summands by the maps
$$
\Lc{L}^{\rk (L)}(\A_P) \to \Lc{L}^{\rk (L)}(\A)
$$ induced by the identifications $M(\A[L]) \subset M(\A_P[L])$. 
It is easy to see that the map $\iota_P$ is a ring homomorphism and is injective. Hence there is a well defined map
$$
\iota:\bigotimes_{P \in \Cc_{\max}} A(\A_P) \to  A(\A) 
$$
defined by the cup product of the generators:
$$
\iota (a_{P_1} \otimes \cdots \otimes a_{P_k}) = \iota_{P_1}(a_{P_1}) \cup \cdots \cup \iota_{P_k}(a_{P_k}).
$$
We will show that the kernel of the map $\iota$ is $\ker \Delta^*.$ Given classes $a_i \in \Lc{L_i}^{\rk (L_i)}(\A)$, for $i=1, \ldots, h$ it is clear that $a_1 \cup \cdots \cup a_h$ is supported only on the summands $\Lc{L}^{\rk (L)}(\A)$ such that $L \subset \bigcap_i L_i$ and this implies that $J(\A) \subset \ker \iota$.

Moreover, given a maximal layer $P_1 \in \Cc_{\max}(\A)$ and a layer $L \in \Cc(\A_{P_1})$, we have that $\A_{P_1}[L] = \A[L] = \A_{P(L)}[L]$. Hence there is a commutative diagram
$$\xymatrix{
\Lc{L}^{\rk (L)}(\A_{P_1})\ar[rr]^{=} \ar[dr]^{=} & & \Lc{L}^{\rk (L)}(\A) \\
& \Lc{L}^{\rk (L)}(\A_{P_L}) \ar[ur]^{=} & 
}
$$
and this implies that $I(\A) \subset \ker \iota$ and the map $\iota$ is surjective.
Then the map $\iota$ induces a well defined surjective map $\overline{\iota}$. 
The injectivity of $\overline{\iota}$ follows from a rank-counting argument, since the $\Z$-modules
$A(\A)$ and $\bigotimes_{P \in \Cc_{\max}} A(\A_P)$ have the same rank given by the sum
$$
\sum_{L \in \Cc(\A)} 2^{\rk(L)} \dim H^{\rk(L)}(M(\A[L]);\Q).
$$
\end{proof}

\section{Dependency on the poset of layers}\label{sec:combiforse}

We now turn to the study of the relationship between the cohomology algebra structure and the poset of layers. We refer to Section \ref{sec:combin_aspects} for a discussion of the our results on this topic.

\subsection{Whitney homology of the poset of
  layers}\label{sec:Whitney} In analogy with the case of hyperplanes,
the additive structure of the toric OS-algebra can be obtained as the
(Whitney) homology of the sheaf of rings $\mathscr W: L \mapsto
H^*(L;\Z)$ defined on the poset of layers $\Cc$ with zero maps as
restrictions. In fact, in this situation the differentials of the
associated spectral sequence \cite[Section 4]{baclawski}
vanish already at the first page, thus we obtain 
$$
H^q(\Cc,\mathscr W) = \bigoplus_{L\in \Cc, \rk(L) = q} H^*(L;\Z )^{\mu(T^d,L)}\simeq \bigoplus_{L\in \Cc, \rk(L) = q} H^*(L;\Z )\otimes \Z^{\mu(T^d,L)} \simeq $$
$$ \bigoplus_{L\in \Cc, \rk(L) = q} H^*(L;\Z )\otimes H^q(M(\A[L]);\Z)
$$
since the absolute value of the M\"obius function at $L$ is precisely the number of maximal no broken circuit sets of $\Cc_{\leq L}$.

\subsection{Centered arrangements with unimodular basis}

Suppose that the given toric arrangement is centered, i.e., each subtorus is of the form $Y_i=\ker \chi_i$ (for notations see Section \ref{sec:maindef}). We can identify the lattice $\operatorname{Hom} (T,\C^*)$ with $\Z^d$ through any isomorphism.  The characters then correspond to the columns of a $(d\times n)$ matrix $A$ with integer entries.   For any subset $I\subseteq [n]$ let then $A(I)$ denote the matrix given by the columns of $A$ with indices in $I$ and let $m(I)$ denote the product of the invariant factors of $A(I)$. Then the function $m(\cdot)$ defines an arithmetic matroid on the $\mathbb Q$-dependency matroid of the columns of $A$ (we refer to \cite{BrMo} for basics on arithmetic matroids); the matrix A is then called a {\em representation} of this arithmetic matroid.
In particular notice that, if $\vert I \vert = d$, we have that
$m(I)=\vert \det A(I) \vert$.

Now suppose that the set of defining characters has a unimodular basis, i.e., that - say - $\chi_1,\ldots ,\chi_d$ are a basis of the lattice 
$\operatorname{Hom}(T,\C^*)$. Notice that the existence of a unimodular basis can be ascertained from the multiplicity data of the associated arithmetic matroid: such a basis has multiplicity $1$. We choose the isomorphism $\Hom(T,\mathbb C^*) \simeq \mathbb Z^d$ to be such that $\chi_i$ is sent to the standard vector $e_i$ for $i\leq d$. Then, the leftmost $d\times d$-block of $A$, $A(\{1,\ldots ,d\})$, is the identity matrix.

We now claim that in this case the whole matrix $A$ can be recovered from the multiplicity data (which is part of the information given by the poset of layers).

\begin{thm}\label{thm:ricostruzione}
  If an arithmetic matroid with a basis of multiplicity $1$ is representable by a matrix $A$, then $A$ is unique up to sign reversal of the column vectors.
\end{thm}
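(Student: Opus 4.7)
The strategy will be to use the unimodular basis as a normalization device and then reduce the theorem to a ``coboundary'' argument on a bipartite graph. Given two representations $A, A'$ of the same arithmetic matroid, the basis $B_0 \subset \{1,\ldots,n\}$ of multiplicity $1$ satisfies $|\det A(B_0)| = |\det A'(B_0)| = 1$, so $A(B_0), A'(B_0) \in GL_d(\mathbb{Z})$, and we may act on each matrix by suitable elements of $GL_d(\mathbb{Z})$ on the left, and relabel indices, to assume $A = (I_d \mid B)$ and $A' = (I_d \mid B')$ for $d\times(n-d)$ integer matrices $B, B'$. The residual freedom respecting this form is $B \mapsto D_r B D_c$ with $D_r = \operatorname{diag}(r_1,\ldots,r_d)$ and $D_c = \operatorname{diag}(c_{d+1},\ldots,c_n)$ diagonal of $\pm 1$'s ($D_r$ encoding a sign-flip change of basis of $\mathbb{Z}^d$, with the first $d$ columns corrected back to $I_d$ by sign reversals, and $D_c$ encoding sign reversals of the remaining columns). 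The theorem thus amounts to the claim $B' = D_r B D_c$ for some such $D_r, D_c$.

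For the multiplicity of $I_{i,j} := (B_0 \setminus \{i\}) \cup \{j\}$ we read off $|b_{ij}|$ from $A$ and $|b'_{ij}|$ from $A'$, forcing $|b_{ij}| = |b'_{ij}|$; in particular $B, B'$ share the same support. Let $G$ be the bipartite graph on $\{1,\ldots,d\} \sqcup \{d+1,\ldots,n\}$ whose edges $(i,j)$ correspond to nonzero entries $b_{ij}$, and define $\tau(i,j) := b'_{ij}/b_{ij} \in \{\pm 1\}$ on its edges. The desired identity $b'_{ij} = r_i b_{ij} c_j$ is equivalent to $\tau$ being a \emph{coboundary}, that is $\tau(i,j) = r_i c_j$, or yet equivalently $\prod_{e \in C} \tau(e) = 1$ for every cycle $C$ in $G$.

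The cycle condition will be proved by induction on cycle length, leveraging higher-order swap multiplicities. For a chordless cycle $C\colon i_1 - j_1 - i_2 - \cdots - i_k - j_k - i_1$ of length $2k$, consider the basis $I_C := (B_0 \setminus \{i_1,\ldots,i_k\}) \cup \{j_1,\ldots,j_k\}$: its multiplicity is the absolute value of the $k \times k$ minor of $B$ on rows $i_1,\ldots,i_k$ and columns $j_1,\ldots,j_k$. The chordless structure ensures that only two permutations contribute nontrivially to this determinant (the identity and the cyclic shift $\ell \mapsto \ell - 1 \pmod k$), yielding monomials $P_1 = \prod_\ell b_{i_\ell, j_\ell}$ and $P_2 = \prod_\ell b_{i_\ell, j_{\ell-1}}$ with signs $+1$ and $(-1)^{k-1}$. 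Writing $\tau_1 := \prod_\ell \tau(i_\ell, j_\ell)$ and $\tau_2 := \prod_\ell \tau(i_\ell, j_{\ell-1})$, the equality $|P_1 + (-1)^{k-1} P_2| = |\tau_1 P_1 + (-1)^{k-1} \tau_2 P_2|$ (which must hold because $A$ and $A'$ give the same multiplicity) forces $\tau_1 = \tau_2$---since $P_1, P_2$ are both nonzero, the alternative would produce unequal absolute values. Hence $\prod_{e \in C} \tau(e) = \tau_1 \tau_2 = 1$. A cycle with a chord $e$ will decompose into two shorter cycles sharing $e$, and the inductive hypothesis combined with $\tau(e)^2 = 1$ yields the coboundary condition on the larger cycle; the base case $2k = 4$ is automatically chordless in a bipartite graph. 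The main technical obstacle is exactly this chordless analysis, where the two-term structure of the determinant is what lets the mere equality of absolute values---all the information an arithmetic matroid records---pin down the sign product around the cycle.
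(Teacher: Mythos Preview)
Your proof is correct and takes a genuinely different route from the paper's. Both arguments begin by normalising to $A=(I_d\mid B)$ and reading off $|b_{ij}|$ from the single-swap multiplicities. From there the paper proceeds algorithmically: it fixes signs along a carefully chosen system of ``first nonzero'' entries (in effect choosing a spanning structure in the bipartite support graph) and then, for each remaining entry, determines its sign by inspecting a single $2\times 2$ minor whose other three entries are already known. Your argument instead recasts the sign ambiguity as a $\{\pm 1\}$-valued edge function $\tau$ on the bipartite support graph and shows it is a coboundary by checking that $\prod_{e\in C}\tau(e)=1$ on every cycle; the key observation---that for a chordless $2k$-cycle the corresponding $k\times k$ minor has exactly two nonzero Leibniz terms, so equality of absolute values forces the sign product to be $1$---is clean and handles all chordless cycles uniformly, with chorded cycles reduced by induction on length. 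The paper's approach is more constructive (it actually outputs the signs), while yours is more structural and makes transparent \emph{why} the multiplicity data suffices: the only information lost in passing to absolute values of minors is precisely a $\mathbb Z/2$-cochain modulo coboundaries, and the two-term minors pin down its class. One small point worth making explicit in your write-up: when $I_C$ happens to be dependent both determinants vanish, and in that degenerate case $P_1=-(-1)^{k-1}P_2$ together with the vanishing of $\tau_1P_1+(-1)^{k-1}\tau_2P_2$ still yields $\tau_1=\tau_2$, so the argument goes through unchanged.
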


\begin{proof}

For every non-zero entry $a_{i,j}$ of $A$, ($j>d$), we have $a_{i,j} = (-1)^i\det(A(1,\ldots, i-1,j,i+1,\ldots,d))$.
$$
a_{i,j}=m([d]\setminus \{i\} \cup \{j\}) \operatorname{sign}(\det A (j,1,\ldots,i-1,i+1,\ldots,d)).
$$

We may without loss of generality (by taking negatives of characters) suppose that the first nonzero entries past the first $d$ columns in every row are positive, i.e., for $$j_0(i):=\min\{j>d \mid a_{i,j} \neq 0\},$$ 
$$a_{i,j_0} >0$$
and that the first entry in every column is positive, i.e., if $$i_0(j):=\min\{i\mid a_{i,j}\neq 0\},$$ 
$$a_{i_0(j),j} = \det(A(j, 1, \cdots, \widecheck{i_0(j)}, \cdots, d)) >0.$$

We will determine the sign of the entries $a_{i,j}$ by induction on $j$. We assume $j \geq d$ and if $j=d$ there's nothing to prove. 

For a fixed $j>d$ we will determine the sign of the entries $a_{i,j}$ by induction on $i$.
For fixed $i$ and $j$, we restrict to study the submatrix $A'$ of $A$ given by the columns $1, \ldots, d$ and $j' >d$ such that $a_{i,j'} \neq 0$ of $A$ and to the corresponding submatroid $m'$.
Then we can assume that $a_{i,j'} \neq 0$ for all $d < j' \leq j$.

Let $i_0(j)$ be as above, we recursively define the integers $i_h(j)$ for $h>0$ as follows:
$$
i_h(j):=\min\{i >i_{h-1}(j)\mid a_{i,j}\neq 0\}
$$
and let $$\overline{k}(j):= \min\{k \mid j_0(i_k(j)) \neq j\}.$$

We notice that if $\overline{k}(j) = \infty$ then $j_0(i_k(j))=j$ for all $k$ and hence we are assuming that $a_{i,j} \geq 0$ for all $i$. In general, since for $k \leq \overline{k}(j)$ we have that 
$j_0(i_k(j)) = j$,
we can assume that $a_{i_k(j),j} >0$ for all $k < \overline{k}(j)$.

Assume $\overline{k}(j) < \infty$.
Hence for a fixed $i$, if we suppose $i>i_0(j)$ and $a_{i,j}\neq 0$, we have three possible cases:
\begin{itemize}
\item[a)] if $i_{\overline{k}(j)}(j)>i$ then we have $i = i_k(j)$ for a certain $k \leq \overline{k}(j)$  and hence we are already assuming that $a_{i,j}>0$;

\item[b)] if $i_{\overline{k}(j)}(j)=i$ then, up to changing the sign of the $i_0(j), \ldots, i_{\overline{k}(j)-1}(j) $-th rows of $A$ and of the $i_0(j), \ldots, i_{\overline{k}(j)-1}(j) $-th and $j$-th columns of $A$, we can assume that $a_{i,j}>0$;

\item[c)] if $i_{\overline{k}(j)}(j)<i$ then we already know by induction the sign of 
$a_{i,j_0(i_{\overline{k}(j)}(j))}$ 
and since from the hypothesis of restriction we have that $a_{i,j_0(i_{\overline{k}(j)}(j))}\neq 0$, we can consider, up to a sign, the determinant
$$
| \det A'(j_0(i_{\overline{k}(j)}(j)),j, 1, \cdots ,\widecheck{{i_{\overline{k}(j)}(j)}}, \cdots, \widecheck{i}, \cdots, d) |= $$
$$
=| a_{i,j} a_{i_{\overline{k}(j)}(j), j_0(i_{\overline{k}(j)}(j))} - a_{i,j_0(i_{\overline{k}(j)}(j))} a_{i_{\overline{k}(j)}(j),j} |.
$$
All entries in the right hand side are non zero and we have
$$
m'([d]\setminus \{i_{\overline{k}(j)}(j), i \} \cup \{j_0(i_{\overline{k}(j)}(j)),j\})= 
| \det A'(j_0(i_{\overline{k}(j)}(j)),j, 1, \cdots ,\widecheck{{i_{\overline{k}(j)}(j)}}, \cdots, \widecheck{i}, \cdots, d) |
$$
we can determine the sign of $a_{i,j}$.
\end{itemize}
\vspace{-5mm}
\end{proof}

\subsection{Further questions and examples}\label{sec:examples}

We close by presenting some examples addressing the dependency of
the ring structure from the arrangement's combinatorics.

\subsubsection{Isomorphism type}

Since our description of the cohomology ring of the complement of a toric arrangement depends on the defining equation of the arrangement, we are led to consider the following problem:
\begin{question}
Is the toric OS-algebra combinatorial? Does the ring $H^*(M(\A);\Z)$ only depend on the poset $\Cc(\A)$?
\end{question}

We provide an example which shows the delicacy of the situation, even in small rank. We give two complexified toric arrangements (of rank $2$) with isomorphic posets of layers whose integer cohomology rings are indeed isomorphic -- yet the isomorphism can't be chosen to be natural with respect to the inclusion into the ambient torus $(\C^*)^2$.

\begin{es} \label{ex:noncombinatorial}
Fix $d=2$ and let $T = (\C^*)^2$ the $2$-dimensional complex torus with coordinates $z_1, z_2.$ 
We write $H_{ij}$ for the subtorus defined by the equation
$H_{ij}=\{(z_1,z_2) \in T \mid z_1^iz_2^j = 1\}.$ Moreover we write $\chi_{ij}$ for the character defined by $\chi_{ij}(z):= z_1^iz_2^j$. Consider the arrangements $\A^1, \A^2$ defined as follows:
$$
\A^1= \{H_{10},H_{15}\}
$$
and
$$
\A^2= \{H_{10},H_{25} \}.
$$
The two arrangements have isomorphic posets of layers described in Figure \ref{layers_ex}.
\begin{figure}[htb] 
\resizebox{.3\textwidth}{!}{\input{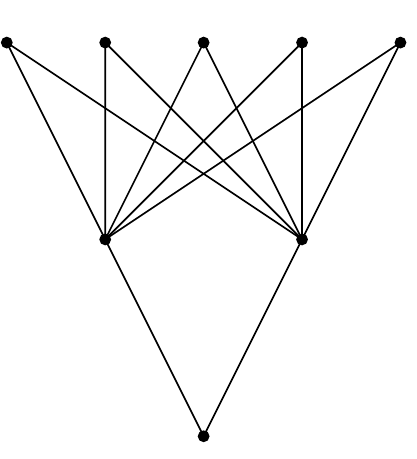_t}}
\caption{The poset of layers associated to the arrangements $\A^1$ and $\A^2$ of Example \ref{ex:noncombinatorial}.}
\label{layers_ex}
\end{figure}

Let $x$ be a generator of $H^1(\C^*; \Z)$ and $y$ a generator of $H^1(\C \setminus \{1\}; \Z).$

The group $H^1(M(\A^1);\Z)$ has rank $4$ and is generated by the classes $x_1:= \chi_{10}^*(x), x_2:=\chi_{01}^*(x), 
y_1:=\chi_{10}^*(y), y_2:=\chi_{15}^*(y).$ The inclusions $i_1: \C^*\setminus\{1\} \into \C^*$ and
$i_2: \C^*\setminus\{1\} \into \C \setminus \{1\}$ give the relation 
\begin{equation}\label{eq:rel_xy}
i_1^*(x) i_2^*(y) = 0.
\end{equation}
From the identity $\chi_{15}= \chi_{10}\chi_{01}^5$ we
get $\chi_{15}^*(x)=x_1+5x_2$ and hence, applying Equation \eqref{eq:rel_xy}, we get the following relations for the ring $H^*(M(\A^1);\Z)$:
\begin{equation}\label{eq_1a}
x_1y_1=0
\end{equation}
and
\begin{equation}\label{eq_1b}
(x_1+5x_2)y_2=0.
\end{equation} 
Moreover we have the square relations $x_1^2 = x_2^2 = y_1^2 = y_2^2 =0.$
Let $p^1_0, \ldots, p^1_4$ the points of the intersection $H_{10}\cap H_{15}$, the group $H^2(M(\A^1);\Z)$
has rank $8$ and is generated by $x_1x_2, x_2y_1, x_1y_2, \tau_0, \ldots, \tau_4$, where $\tau_k$ corresponds to the class $1 \otimes \alpha_k$ in the algebra $A(\A^1)$ and $\alpha_k$ is a top class generating the group $H^2(M(\A^1[p_k^1]);\Z).$ The ring structure is completed by the relation
$$
y_1y_2 = \sum_{k=0}^4 \tau_k.
$$

The analogous computation for $\A^2$ goes as follows. The group $H^1(M(\A^2);\Z)$ has rank $4$ and is generated by the classes $x_1:= \chi_{10}^*(x), x_2:=\chi_{01}^*(x), 
y_1:=\chi_{10}^*(y), y_2':=\chi_{25}^*(y).$ 
The following relations holds for the ring $H^*(M(\A^2);\Z)$:
\begin{equation}\label{eq_2a}
x_1y_1=0
\end{equation}
and
\begin{equation}\label{eq_2b}
(2x_1+5x_2)y'_2=0.
\end{equation} 
Moreover we have the square relations $x_1^2 = x_2^2 = y_1^2 = {y_2'}^2 =0.$
Let $p^2_0, \ldots, p^2_4$ the points of the intersection $H_{10}\cap H_{25}$, the group $H^2(M(\A^2);\Z)$
has rank $8$ and is generated by $x_1x_2, x_2y'_1, (x_2-2x_1)y'_2, \tau_0', \ldots, \tau_4'$, where $\tau_k'$ corresponds to the class $1 \otimes \alpha'_k$ in the algebra $A(\A^2)$ and $\alpha'_k$ it a top class generating the group $H^2(M(\A^2[p_k^2]);\Z).$ The ring structure is completed by the relation
$$
y_1y_2' = \sum_{k=0}^4 \tau_k'.
$$

With an easy computation one can see that the annihilator of an element $u\neq 0$ of dimension $1$ in the ring $R_1 = H^*(M(\A^1);\Z)$ (that we simply call $\mathrm{Ann}_1^1(u)$) is a subgroup of $R_1^1$ that has rank $1$, except when $u$ belongs to one of the following two groups:
$$
G_1=\{ a x_1 + b y_1 | a,b \in \Z \}
$$
or 
$$
G_2= \{a (x_1+5x_2) + b y_2 | a,b \in \Z\}.
$$
In such cases the rank of $\mathrm{Ann}_1^1(u)$ is $2.$

Similarly, for $u\neq 0$ of dimension $1$ in the ring $R_2=H^*(M(\A^2);\Z)$, the rank of $\mathrm{Ann}_1^1(u)$ has rank $2$ if and only if $u$ belongs to one of the following two groups:
$$
G_1=\{ a x_1 + b y_1 | a,b \in \Z \}
$$
or 
$$
G_2'=\{ a (2x_1+5x_2) + b y_2' | a,b \in \Z\}.
$$

It is easy to verify that the map $f:R_1 \to R_2$ defined as follows
$$ \begin{array}{ll}
 f:x_1 \mapsto  2x_1+5y_1;& f:y_1 \mapsto x_1 + 2 y_1; \\
 f:x_2 \mapsto  x_2 - y_1;& f: y_2 \mapsto y_2';  \\
 f:\sum_{i=0}^4 \tau_i \mapsto  x_1y_2' + 2 \sum_{i=0}^4 \tau_i'; &  f:\tau_i - \tau_j \mapsto \tau_i' - \tau_j'
 \end{array}$$
is an isomorphism of rings.

In order to show the impossibility of a natural isomorphism we can consider the ring $R_0=H^*(T; \Z) = \Lambda[x_1, x_2]$. 
The inclusion of $M(\A^i)$ ($i=1,2$) into $T$ induces a structure of $R_0$-algebra on $R_i$.

We claim that the pairs of rings $(R_1, R_0)$ and $(R_0,R_2)$ are not isomorphic, hence
the two cohomology ring $R_1$ and $R_2$ are not isomorphic as algebras on the cohomology of $T$.

In fact we can consider the groups $G_1 \cap R_0$ and $G_2 \cap R_0$ for $(R_1, R_0)$ and 
respectively $G_1 \cap R_0$ and $G_2' \cap R_0$ for $(R_2, R_0)$. In the first pair
a sum of generators of the two intersections is a multiple of $5$ (namely $(x_1+5x_2) - x_1 = 5 x_2$) while in the second pair this is not possible since the two intersections are 
generated by $x_1$ and $2x_1+x_5$.
\end{es}

\subsubsection{Degree one generators}

The question of whether a cohomology ring is generated in degree one
is natural and well-studied. For toric arrangements, this question has
been addressed also in \cite{dp2005, Deshpande}. 

\begin{question}
When is the cohomology ring $H^*(M(\A);\Z)$ generated in degree $1?$
Is this property combinatorially determined by $\Cc(\A)$?
\end{question}

\noindent In order to have that the cohomology ring $H^*(M(\A);\Z)$ is generated in degree $1,$ it is natural to ask as a necessary condition ensuring that intersections can distinguish layers, that is
\begin{center}
\emph{for every $k$, the Boolean algebra generated by the non-empty intersections \\of rank $k$ of elements of $\A$ contains all the layers of rank $k.$}
\end{center}
However, the following example shows that this condition is not sufficient.

\begin{es}
Fix $d=2$ and let $T = (\C^*)^2$ the $2$-dimensional complex torus with coordinates $z_1, z_2.$ 
We define the arrangement $\A$ given by the following subtori (see figure \ref{ex_notgen}):
\begin{figure}[htb] 
\resizebox{.4\textwidth}{!}{\input{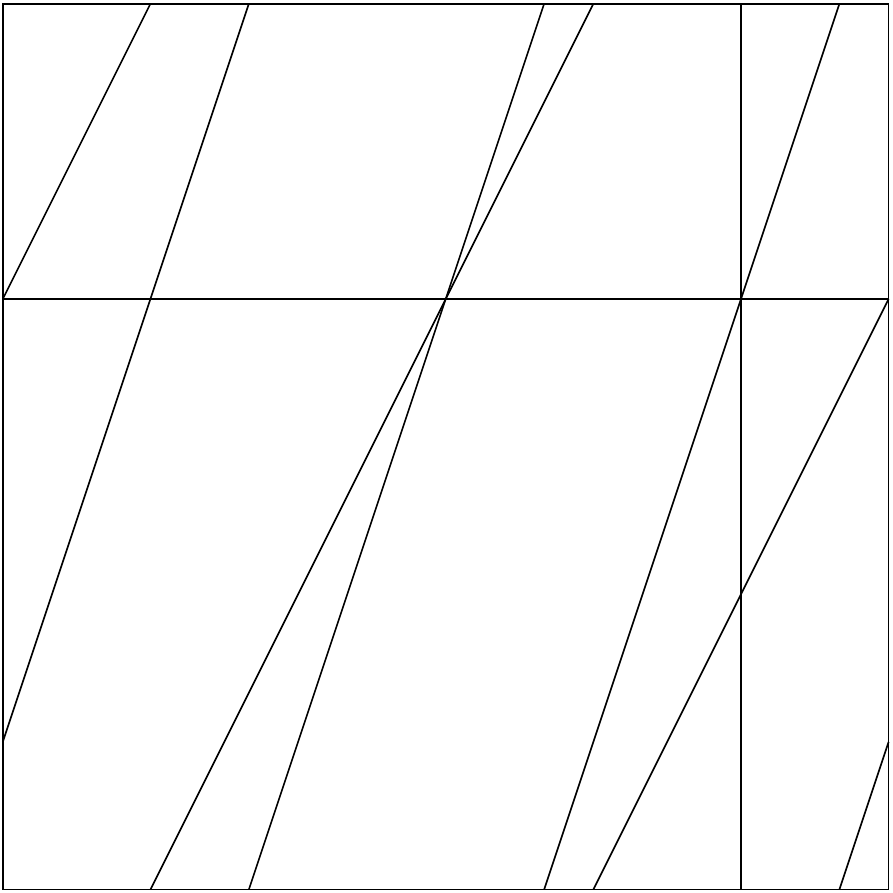_t}}
\caption{Example of a toric arrangement with cohomology ring not generated in degree $1$.}
\label{ex_notgen}
\end{figure}
$$\begin{array}{llll}
H_1= \{z_1 = 1 \}; &H_2= \{z_1z_2^2 = 1 \}; &
H_3= \{z_1z_2^3 = 1 \}; &H_4= \{z_2 =  e^{\frac{2\pi \imath}{3}}\}.
\end{array}$$
It is easy to check that the intersections of rank $2$ are the following:
$$
\{S\}= H_2 \cap H_4; \{P\}= H_1 \cap H_4; \{Q, O, P\} = H_1 \cap H_3; \{R,O\}=H_1 \cap H_2; \{O\} = H_2 \cap H_3 
$$
and hence they generate a Boolean algebra containing all the layers of rank $1$.

Nevertheless, we claim that the algebra $H^*(M(\A);\Z)$ is not generated in rank $1$. In fact the five local arrangements of rank $2$, namely $\A[P], \A[Q], \A[R], \A[O]$ and $\A[S]$, determines a submodule of rank $7$ in $H^2(M(\A);\Z)$. This module can be generated only by products of the generators associated to the four hypertori of $\A$. The claim follows since ${\binom{4}{2}} = 6 < 7 $.
\end{es}

\nocite{*} 
\bibliographystyle{acm}
\bibliography{bibliotos} 

\end{document}